
\documentclass[a4paper,reqno,11pt]{amsart}
\usepackage{amssymb,amsfonts,verbatim,mathtools}
\usepackage[alwaysadjust]{paralist}
\usepackage{hyperref}

\hypersetup{
    colorlinks,
    linkcolor={blue},
    citecolor={blue},
    urlcolor={blue}
}
\usepackage[all]{xy}
\usepackage{pstricks}
\usepackage{auto-pst-pdf}
\usepackage{pstricks-add}
\usepackage{mathptmx}
\synctex=1



\renewcommand{\P}{\mathcal{P}}
\newcommand{\C}{{\mathbb C}}
\newcommand{\R}{{\mathbb R}}
\newcommand{\Z}{{\mathbb Z}}

\newcommand{\n}{\nabla}
\newcommand{\im}{\operatorname{im}}
\renewcommand{\phi}{\varphi}
\newcommand{\eps}{\varepsilon}
\newcommand{\spec}{\operatorname{spec}}
\newcommand{\Cliff}{\operatorname{Cliff}}

\newcommand{\Hom}{\operatorname{Hom}}

\newcommand{\dom}{\operatorname{dom}}
\newcommand{\Bad}{B^\mathrm{ad}}
\newcommand{\BAPS}{B_\mathrm{APS}}
\newcommand{\BmAPS}{B_\mathrm{mAPS}}
\newcommand{\ad}{\mathrm{ad}}

\newcommand{\ind}{\operatorname{ind}}
\newcommand{\<}{\left\langle}       
\renewcommand{\>}{\right\rangle}       

\newcommand{\id}{{\operatorname{id}}}

\newcommand{\dM}{{\partial M}}
\newcommand{\intM}{{\mathring{M}}}
\newcommand{\LzM}[1]{\| #1 \|_{L^2(M)}}

\newcommand{\Cucc}{C^\infty_{cc}}
\newcommand{\Cu}{C^\infty}

\newcommand{\nor}{\mathrm{nor}}
\newcommand{\abs}{\mathrm{abs}}
\newcommand{\rel}{\mathrm{rel}}
\newcommand{\nd}{\nabla^\partial}
\newcommand{\loc}{\operatorname{loc}}
\newcommand{\dV}{\operatorname{dV}}
\newcommand{\dS}{\operatorname{dS}}

\newtheorem{thm}{Theorem}[section]
\newtheorem{lemma}[thm]{Lemma}
\newtheorem{prop}[thm]{Proposition}
\newtheorem{cor}[thm]{Corollary}

\theoremstyle{definition}
\newtheorem{remark}[thm]{Remark}
\newtheorem{remarks}[thm]{Remarks}
\newtheorem{definition}[thm]{Definition}
\newtheorem{definitions}[thm]{Definitions}

\newtheorem{example}[thm]{Example}
\newtheorem{examples}[thm]{Examples}

\newtheorem{stase}[thm]{Standard Setup}


\newcommand{\dref}[1]{Definition~\ref{#1}}
\newcommand{\cref}[1]{Corollary~\ref{#1}}
\newcommand{\lref}[1]{Lemma~\ref{#1}}
\newcommand{\pref}[1]{Proposition~\ref{#1}}
\newcommand{\tref}[1]{Theorem~\ref{#1}}

\newcommand{\eref}[1]{Example~\ref{#1}}

\newcommand{\setup}{Standard Setup~\ref{setup}}


\newtagform{simple}{}{}{}



\long\def\symbolfootnote[#1]#2{\begingroup%
\def\thefootnote{\fnsymbol{footnote}}\footnote[#1]{#2}\endgroup} 


\frenchspacing
\parindent0cm
\parskip=.5\baselineskip
\sloppy


\begin{document}


\title
[Elliptic Boundary Value Problems]
{Guide to Elliptic Boundary Value Problems \\ for Dirac-Type Operators}

\author{Christian B{\"a}r}
\author{Werner Ballmann}

\address{
Institut f\"ur Mathematik\\
Universit{\"a}t Potsdam\\
Karl-Liebknecht-Str. 24--25\\
Haus 9\\
14476 Potsdam\\
Germany
}
\address{
Max Planck Institute for Mathematics\\
Vivatsgasse 7\\
53111 Bonn\\
Germany
}
 
\email{\href{mailto:baer@math.uni-potsdam.de}{baer@math.uni-potsdam.de}}
\email{\href{mailto:hwbllmnn@mpim-bonn.mpg.de}{hwbllmnn@mpim-bonn.mpg.de}}

\dedicatory{Dedicated to the memory of Friedrich Hirzebruch}

\subjclass[2010]{35J56, 58J05, 58J20, 58J32}

\keywords{Operators of Dirac type, boundary conditions, boundary regularity,
coercivity, coercivity at infinity, spectral theory, index theory,
decomposition theorem, relative index theorem, cobordism theorem}

\thanks{}


\begin{abstract}
We present an introduction to boundary value problems for Dirac-type operators on complete Riemannian manifolds with compact boundary.
We introduce a very general class of boundary conditions which contains local elliptic boundary conditions in the sense of Lopatinski and Shapiro as well as the Atiyah-Patodi-Singer boundary conditions.
We discuss boundary regularity of solutions and also spectral and index theory.
The emphasis is on providing the reader with a working knowledge.
\end{abstract}

\maketitle

\section{Introduction}
\label{secIn}

Boundary value problems for elliptic differential equations of second order, such as the Dirichlet problem for harmonic functions, have been the object of intense investigation since the 19th century.
For a large class of such problems, the analysis is by now classical and well understood.
There are numerous applications in and outside mathematics.

The situation is much less satisfactory for boundary value problems for first-order elliptic differential operators such as the Dirac operator.
Let us illustrate the phenomena that arise with the elementary example of holomorphic functions on the closed unit disk $D\subset\C$.
Holomorphic functions are the solutions of the elliptic equation $\bar\partial f=0$.
The real and imaginary parts of $f$ are harmonic and they determine each other up to a constant.
Thus for most smooth functions $g:\partial D\to \C$, the Dirichlet problem $\bar\partial f=0$, $f|_{\partial D}=g$, is not solvable.
Hence such a boundary condition is too strong for first-order operators.

Ideally, a ``good'' boundary condition should ensure that the equation $\bar\partial f=h$ has a unique solution for given $h$.
At least we want to have that the kernel and the cokernel of $\bar\partial$ become finite dimensional, more precisely, that $\bar\partial$ becomes a Fredholm operator.
If we expand the boundary values of $f$ in a Fourier series, $f(e^{it})=\sum_{k=-\infty}^\infty a_k e^{ikt}$, then we see $a_{-1}=a_{-2}=\ldots=0$ because otherwise $f$ would have a pole at $z=0$.
Therefore it suffices to impose $a_0=a_1=a_2=\ldots=0$ to make the kernel trivial.
Similarly, imposing $a_k=a_{k+1}=a_{k+2}=\ldots=0$ would make the kernel $k$-dimensional.
These are typical examples for the nonlocal boundary conditions that one has to consider when dealing with elliptic operators of first order.

A major break-through towards a general theory was achieved in the seminal article \cite{APS}, where Atiyah, Patodi and Singer obtain an index theorem for a certain class of first-order elliptic differential operators on compact manifolds with boundary. 
This work lies at the heart of many investigations concerning boundary value problems and $L^2$-index theory for first-order elliptic differential operators.

The aim of the present paper is to provide an introduction to the general theory of elliptic boundary value problems for Dirac-type operators and to give the reader a sound working knowlegde of this material.
To a large extent, we follow \cite{BB} where all details are worked out but, due to its length and technical complexity, that article may not be a good first start.
Results which we only cite here are marked by a $\blacksquare$.
The present paper also contains new additions to the results in \cite{BB}; they are given full proofs, terminated by a $\Box$.
For previous results and alternative approaches see the list of references in \cite{BB}.

After some preliminaries on differential operators in Section~\ref{secIntro},
we discuss Dirac-type operators in Section~\ref{susedity}.
An important class consists of Dirac operators in the sense of Gromov and Lawson  \cite{GL,LM} associated with Dirac bundles.
In Section~\ref{secdity},
we introduce boundary value problems for Dirac-type operators as defined in \cite{BB}.
We discuss their regularity theory.
For instance, \tref{regula} applied to $\bar\partial$ tells us, that, for given $h\in C^\infty(D,\C)$, any solution $f$ of $\bar\partial f=h$ satisfying the boundary conditions described above will be smooth up to the boundary.
We explain that the classical examples,
like local elliptic boundary conditions in the sense of Lopatinski and Shapiro and the boundary conditions introduced by Atiyah, Patodi, and Singer,
belong to our class of boundary value problems.
This class also contains examples which cannot be described by pseudo-differential operators.
In Section~\ref{spec}, we investigate the spectral theory associated with boundary conditions.
The index theory for boundary value problems is the topic of Section~\ref{inth}.
In general, we assume that the underlying manifold $M$ is a complete, not necessarily compact, Riemannian manifold with compact boundary.
We discuss coercivity conditions which ensure the Fredholm property also for noncompact $M$.

\section{Preliminaries}
\label{secIntro}

Let $M$ be a Riemannian manifold with compact boundary $\partial M$
and interior unit normal vector field $\nu$ along $\partial M$.
The Riemannian volume element on $M$ will be denoted by $\dV$, the one on $\dM$ by $\dS$.
Denote the interior part of $M$ by $\intM$.

For a vector bundle $E$ over $M$
denote by $C^\infty(M,E)$ the space of smooth sections of $E$
and by $C^\infty_c(M,E)$ and $C^\infty_{cc}(M,E)$ the subspaces of $C^\infty(M,E)$
which consist of smooth sections with compact support in $M$ and $\intM$,
respectively.
Let $L^2(M,E)$ be the Hilbert space (of equivalence classes)
of square-integrable sections of $E$
and $L^2_{\loc}(M,E)$ be the space of locally square-integrable sections of $E$.
For any integer $k\ge0$,
denote by $H^k_{\loc}(M,E)$ the space of sections of $E$
which have weak derivatives up to order $k$
(with respect to some or any connection on $E$)
that are locally square-integrable.

\subsection{Differential operators}
\label{susediffop}

Let $E$ and $F$ be Hermitian vector bundles over $M$ and
\begin{equation*}
  D: C^\infty(M,E) \to C^\infty(M,F)
\end{equation*}
be a differential operator of order (at most) $\ell$ from $E$ to $F$.
For simplicity, we only consider the case of complex vector bundles.
If $D$ acts on real vector bundles one can complexify and thus reduce to the complex case.

Denote by $D^*$ the {\em formal adjoint} of $D$.
This is the unique differential operator of order (at most) $\ell$ from $F$ to $E$ such that
\begin{equation*}
  \int_M \<D\Phi,\Psi\>\dV = \int_M \<\Phi,D^*\Psi\>\dV ,
\end{equation*}
for all $\Phi\in C^\infty_{cc}(M,E)$ and $\Psi\in C^\infty(M,F)$.
We say that $D$ is {\em formally self-adjoint} if $E=F$ and $D=D^*$.
 
Consider $D$ as an unbounded operator, $D_{cc}$,
from $L^2(M,E)$ to $L^2(M,F)$ with domain $\dom D_{cc}=C^\infty_{cc}(M,E)$,
and similarly for $D^*$.
The {\em minimal extension} $D_{\min}$ of $D$ is obtained by taking the
closure of the graph of $D_{cc}$ in $L^2(M,E)\oplus L^2(M,F)$.
In other words, $\Phi\in L^2(M,E)$ belongs to the domain $\dom D_{\min}$ of $D_{\min}$
if there is a sequence $(\Phi_n)$ in $C^\infty_{cc}(M,E)$ which converges
to $\Phi$ in $L^2(M,E)$ such that $(D\Phi_n)$ is a Cauchy sequence in $L^2(M,F)$;
then we set $D_{\min}\Phi:=\lim_n D\Phi_n$.
By definition, $C^\infty_{cc}(M,E)$ is dense in $\dom D_{\min}$
with respect to the graph norm of $D_{\min}$.
The {\em maximal extension} $D_{\max}$ of $D$
is defined to be the adjoint operator of $D^*_{cc}$,
that is, $\Phi$ in $L^2(M,E)$ belongs to the domain $\dom D_{\max}$ of $D_{\max}$
if there is a section $\Xi\in L^2(M,F)$ such that $D\Phi=\Xi$ in the sense of distributions:
\begin{equation*}
  \int_M \<\Xi,\Psi\>\dV = \int_M\<\Phi,D^*\Psi\> \dV,
\end{equation*}
for all $\Psi\in C^\infty_{cc}(M,F)$; then we set $D_{\max}\Phi:=\Xi$.
In other words, $(\Phi,-\Xi)$ is perpendicular to the graph of $D^*_{cc}$
in $L^2(M,E)\oplus L^2(M,F)$.
Equivalently, $(\Phi,-\Xi)$ is perpendicular to the graph of $D^*_{\min}$
in $L^2(M,E)\oplus L^2(M,F)$.
It is easy to see that
\begin{equation*}
  D_{\min} \subset D_{\max}
\end{equation*}
in the sense that $\dom D_{\min}\subset\dom D_{\max}$
and $D_{\max}|_{\dom D_{\min}}=D_{\min}$.
By definition, $D_{\min}$ and $D_{\max}$ are {\em closed operators},
meaning that their graphs are closed subspaces of $L^2(M,E)\oplus L^2(M,F)$.
Hence the {\em graph norm}, that is, the norm associated with the scalar product
\begin{equation*}
  ( \Phi,\Psi )_D
  := \int_M ( \langle \Phi,\Psi \rangle + \langle D_{\max}\Phi,D_{\max}\Psi \rangle )\dV ,
\end{equation*}
turns $\dom D_{\min}$ and $\dom D_{\max}$ into Hilbert spaces.
Boundary value problems in our sense are concerned with closed operators
lying between $D_{\min}$ and $D_{\max}$.

\subsection{The principal symbol}
For a differential operator $D$ from $E$ to $F$ of order (at most) $\ell$ as above,
there is a field $\sigma_D:(T^*M)^{\ell}\to\Hom(E,F)$
of symmetric $\ell$-linear maps,
the {\em principal symbol} $\sigma_D$ of $D$,
defined by the $\ell$-fold commutator\footnote{Here $[D,f]=D\circ (f\cdot\id_E) - (f\cdot\id_F)\circ D$.}
\begin{equation*}
  \sigma_D(df_1,\dots,df_\ell) := \frac1{\ell!} [\dots[D,f_1],\dots,f_\ell] ,
\end{equation*}
for all $f_1,\dots,f_\ell\in C^\infty(M,\R)$.
In the case $\ell=1$, this means that
\begin{equation*}
  D(f\Phi) = \sigma_D(df)\Phi + fD\Phi ,
\end{equation*}
for all $f\in C^\infty(M,\R)$ and $\Phi\in C^\infty(M,E)$.
The principal symbol  $\sigma_D$ vanishes precisely at those points where the order of $D$ is at most $\ell-1$.
The principal symbol of $D^*$ is
\begin{equation}
  \sigma_{D^*}(\xi_1,\dots,\xi_k) = (-1)^\ell \sigma_D(\xi_1,\dots,\xi_\ell)^* ,
  \label{symbad}
\end{equation}
for all $\xi_1,\dots,\xi_\ell\in T^*M$.
Since $\sigma_D$ is symmetric in $\xi_1,\ldots,\xi_\ell$,
it is determined by its values along the diagonal;
we use $\sigma_D(\xi)$ as a shorthand notation for $\sigma_D(\xi,\dots,\xi)$.
Then we have, for all $\xi\in T^*M$,
\begin{equation}
  \sigma_{D_1D_2}(\xi) = \sigma_{D_1}(\xi)\circ\sigma_{D_2}(\xi) 
 \label{symcom}
\end{equation}
for the principal symbol of the composition of differential operators
$D_1$ of order $\ell_1$ and $D_2$ of order $\ell_2$.

The Riemannian metric induces a vector bundle isomorphism $TM \to T^*M$, $X \mapsto X^\flat$, defined by $\langle X,Y \rangle = X^\flat(Y)$ for all $Y$.
The inverse isomorphism $T^*M \to TM$ is denoted by $\xi\mapsto \xi^\sharp$.

\usetagform{simple}

\begin{prop}[Green's formula]\label{greenfor}
Let $D$ be a differential operator from $E$ to $F$ of order one.
Then we have, for all $\Phi\in C^\infty_c(M,E)$ and $\Psi\in C^\infty_c(M,F)$,
\begin{align*}
\int_M \<D\Phi,\Psi\>\dV
&= 
\int_M \<\Phi,D^*\Psi\>\dV - \int_{\partial M} \langle\sigma_D(\nu^\flat)\Phi,\Psi\rangle\dS .
\tag{$\blacksquare$}
\end{align*}
\end{prop}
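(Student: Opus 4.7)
The plan is to encode the difference between the two volume integrals as the divergence of a compactly supported vector field on $M$ and then invoke the divergence theorem. Concretely, define $X \in \Cuc(M,TM)$ by the requirement
\begin{equation*}
\< X, Y \> = \< \sigma_D(Y^\flat)\Phi, \Psi \>
\end{equation*}
for every $Y \in TM$. Since $\Phi$ and $\Psi$ are smooth with compact support, so is $X$.

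The key step will be to verify the pointwise identity
\begin{equation*}
\operatorname{div}(X) = \< D\Phi, \Psi \> - \< \Phi, D^*\Psi \>
\end{equation*}
on all of $M$. To establish it on $\intM$, I would test against an arbitrary $f \in \Cuc(\intM, \R)$. Then $f\Phi \in \Cucc(M,E)$, so the defining property of $D^*$ gives $\int_M \< D(f\Phi), \Psi \> \dV = \int_M \< f\Phi, D^*\Psi \> \dV$. Using the first-order Leibniz rule $D(f\Phi) = \sigma_D(df)\Phi + f\, D\Phi$ and the identification $\< \sigma_D(df)\Phi, \Psi \> = \< X, \nabla f \> = df(X)$, this rewrites as
\begin{equation*}
\int_M f \bigl( \< \Phi, D^*\Psi \> - \< D\Phi, \Psi \> \bigr) \dV = \int_M df(X)\, \dV = -\int_M f \operatorname{div}(X)\, \dV,
\end{equation*}
where the last equality is standard integration by parts with no boundary contribution, since $f$ is compactly supported in $\intM$. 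As $f$ is arbitrary, the pointwise identity holds on $\intM$, and by continuity on all of $M$.

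With this identity in hand, the classical divergence theorem applied to the compactly supported field $X$ yields, using that the outward unit normal along $\partial M$ is $-\nu$,
\begin{equation*}
\int_M \operatorname{div}(X)\, \dV = -\int_{\partial M} \< X, \nu \>\, \dS = -\int_{\partial M} \< \sigma_D(\nu^\flat)\Phi, \Psi \>\, \dS.
\end{equation*}
Combining this with the pointwise identity and rearranging gives Green's formula.

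I expect the main obstacle to be the pointwise divergence identity, since the defining property of $D^*$ is a priori only an integrated statement for sections vanishing near the boundary; the cutoff argument above bridges this gap cleanly, avoiding any explicit coordinate computation involving the local form of $\sigma_{D^*}$.
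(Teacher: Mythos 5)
The paper does not give its own proof here but refers to Taylor's book, where the argument is likewise a reduction to the divergence theorem; your approach is thus in the same spirit as the standard reference proof. Your argument is correct and cleanly organized: defining the flux field $X$, deducing the pointwise divergence identity by testing against $f\in C^\infty_c(\intM,\R)$ (so that the weak definition of $D^*$ applies) together with the first-order Leibniz rule, and then applying the divergence theorem once more to produce the boundary term with the sign matching the interior normal $\nu$. One small technical remark: the quantity $Y\mapsto\langle\sigma_D(Y^\flat)\Phi,\Psi\rangle$ is complex-valued (it is a Hermitian pairing), so the object you denote $X$ is really a section of the complexified tangent bundle rather than of $TM$; the cleanest fix is to write $X=X_1+iX_2$ with $X_1,X_2\in C^\infty_c(M,TM)$ (or equivalently to run the whole argument once with $\Psi$ and once with $i\Psi$) and apply the real divergence theorem to each part. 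With that routine adjustment, the proof is complete and correct.
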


\usetagform{default}

For a proof see e.g.\ \cite[Prop.~9.1, p.~160]{Ta}.

\begin{examples}\label{exadiffop}
By definition, a connection $\nabla$ on $E$ is a differential operator
from $E$ to $T^*M\otimes E$ of order one such that $[\nabla,f](\Phi)=df\otimes\Phi$.
We obtain
\begin{equation}
  \sigma_{\nabla}(\xi)(\Phi) = \xi\otimes\Phi
  \quad\text{and}\quad
  \sigma_{\nabla^*}(\xi)(\Psi) = - \Psi(\xi^\sharp) .
  \label{symnn}
\end{equation}
Hence all connections on $E$ have the same principal symbol reflecting the fact that the difference of two connections is of order zero.

There are two natural differential operators of order two associated with $\nabla$,
the second covariant derivative $\nabla^2$
with principal symbol
\begin{equation}
  \sigma_{\nabla^2}(\xi)(\Phi) = \xi\otimes\xi\otimes\Phi
  \label{symn2}
\end{equation}
and the connection Laplacian $\nabla^*\nabla$ with principal symbol
\begin{equation}
  \sigma_{\nabla^*\nabla}(\xi)(\Phi)
  = - |\xi|^2 \Phi ,
  \label{symcl}
\end{equation}
and both, \eqref{symn2} and \eqref{symcl}, 
are in agreement with \eqref{symcom} and \eqref{symnn}.
\end{examples}

\subsection{Elliptic operators}
We say that $D$ is {\em elliptic} if $\sigma_D(\xi):E_x\to F_x$
is an isomorphism, for all $x\in M$ and nonzero $\xi\in T^*_xM$.
In the above examples, $\nabla$, $\nabla^*$, and $\nabla^2$ are not elliptic;
in fact, the involved bundles have different rank.
On the other hand, the connection Laplacian is elliptic, by \eqref{symcl}.

Suppose that $D$ is elliptic.
Then {\em interior elliptic regularity} says that, for any given integer $k\ge0$,
$\Phi\in\dom D_{\max}$ is contained in $H^{k+\ell}_{\loc}(\intM,E)$
if $D_{\max}\Phi$ belongs to $H^k_{\loc}(\intM,F)$.
In particular, if $\Phi\in\dom D_{\max}$ satisfies $D_{\max}\Phi\in C^\infty(\intM,F)$,
then $\Phi\in C^\infty(\intM,E)$.

If $M$ is closed and $D$ is elliptic and formally self-adjoint,
then the eigenspaces of $D$ are finite dimensional, contained in $C^\infty(M,E)$,
pairwise perpendicular with respect to the $L^2$-product, and span $L^2(M,E)$.
As an example, the connection Laplacian is elliptic and formally self-adjoint.

For any differential operator $D:C^\infty(M,E) \to C^\infty(M,F)$ of order one, consider the fiberwise linear bundle map
\[
\mathcal{A}_D: T^*M \otimes \Hom(E,E) \to \Hom(E,F),\quad 
V\mapsto \sum\nolimits_j \sigma_D(e_j^*)\circ V(e_j).
\]
Here $(e_1,\dots,e_n)$ is any local tangent frame and  $(e_1^*,\dots,e_n^*)$ its associated dual cotangent frame of $M$.
Note that $\mathcal{A}_D$ does not depend on the choice of frame.

\begin{prop}\label{prop:guterZshg}
Let $D:C^\infty(M,E) \to C^\infty(M,F)$ be a differential operator of order one such that $\mathcal{A}_D$ is onto.
Then there exists a connection $\nabla$ on $E$ such that 
\begin{equation*}
  D = \sum\nolimits_j \sigma_D(e_j^*)\circ\nabla_{e_j} ,
\end{equation*}
for any local tangent frame $(e_1,\dots,e_n)$
and the associated dual cotangent frame $(e_1^*,\dots,e_n^*)$ of $M$.
\end{prop}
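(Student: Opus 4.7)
The plan is a \emph{guess‑and‑correct} strategy: start from an arbitrary connection on $E$, check that the associated first‑order operator has the same principal symbol as $D$, and then absorb the zeroth‑order discrepancy into the connection using the surjectivity of $\mathcal{A}_D$.

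\textbf{Step 1 (build a test operator).} Fix any auxiliary connection $\nabla^0$ on $E$ and put $D^0 := \sum_j \sigma_D(e_j^*)\circ\nabla^0_{e_j}$. Exactly as in the statement for $\mathcal{A}_D$, this expression is independent of the frame: invariantly, $D^0\Phi$ is the pointwise contraction $T^*M\otimes E\to F$, $\xi\otimes v\mapsto \sigma_D(\xi)v$, applied to $\nabla^0\Phi$. Using the Leibniz rule for $\nabla^0$ together with $df=\sum_j e_j(f)e_j^*$ one immediately gets $\sigma_{D^0}(df)\Phi = \sum_j e_j(f)\sigma_D(e_j^*)\Phi = \sigma_D(df)\Phi$, so $\sigma_{D^0}=\sigma_D$. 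Consequently $A := D - D^0$ has vanishing principal symbol, hence is an operator of order zero, i.e.\ a smooth section $A\in C^\infty(M,\Hom(E,F))$.

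\textbf{Step 2 (reduce to a bundle equation).} I would now look for the desired connection in the form $\nabla = \nabla^0 + B$ with $B\in C^\infty(M,T^*M\otimes\End(E))$. A one‑line computation gives
\[
\sum\nolimits_j \sigma_D(e_j^*)\,\nabla_{e_j}\Phi
 \;=\; D^0\Phi + \sum\nolimits_j \sigma_D(e_j^*)\,B(e_j)\Phi
 \;=\; D^0\Phi + \mathcal{A}_D(B)\Phi ,
\]
so the identity $D = \sum_j \sigma_D(e_j^*)\nabla_{e_j}$ is equivalent to the purely algebraic, fiberwise equation $\mathcal{A}_D(B) = A$.

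\textbf{Step 3 (the main obstacle: a smooth global solution).} The heart of the proof is upgrading pointwise solvability of $\mathcal{A}_D(B)=A$ to a smooth global section $B$. Because $\mathcal{A}_D$ is a surjective bundle homomorphism between vector bundles of constant rank, its kernel is a smooth subbundle. Endowing $T^*M\otimes\End(E)$ and $\Hom(E,F)$ with Hermitian bundle metrics, the restriction of $\mathcal{A}_D$ to $(\ker\mathcal{A}_D)^\perp$ is a smooth bundle isomorphism onto $\Hom(E,F)$; I would define $B$ to be the image of $A$ under its smooth inverse (equivalently $B := \mathcal{A}_D^{*}(\mathcal{A}_D\mathcal{A}_D^{*})^{-1}A$, where $\mathcal{A}_D^{*}$ is the fiberwise adjoint). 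This produces a smooth section satisfying $\mathcal{A}_D(B)=A$, and $\nabla := \nabla^0 + B$ is then the required connection. The only nontrivial ingredient is this constant‑rank/orthogonal‑complement construction; everything else is bookkeeping with principal symbols.
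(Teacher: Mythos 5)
Your proposal is correct and follows essentially the same route as the paper's proof: take an arbitrary connection, observe that the difference $D-D^0$ is a zero-order bundle homomorphism, and use the surjectivity of $\mathcal{A}_D$ (via its restriction to the orthogonal complement of its kernel) to solve $\mathcal{A}_D(B)=D-D^0$ smoothly and set $\nabla=\nabla^0+B$. Your extra remark on why the fiberwise solution can be chosen as a smooth section is a welcome elaboration of a point the paper passes over quickly.
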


The proof can be found in Appendix~\ref{sec:beweise}.

If $D$ is elliptic, $\mathcal{A}_D$ is onto: given $U\in \Hom(E,F)$ put $V(e_2)=\ldots=V(e_n)=0$ and $V(e_1)=\sigma_D(e_1^*)^{-1}\circ U$, for instance.
Hence \pref{prop:guterZshg} applies and we have

\begin{cor}\label{cor:guterZshg}
Let $D:C^\infty(M,E) \to C^\infty(M,F)$ be an elliptic differential operator of order one.
Then there exists a connection $\nabla$ on $E$ such that 
\begin{equation*}
  D = \sum\nolimits_j \sigma_D(e_j^*)\circ\nabla_{e_j} ,
\end{equation*}
for any local tangent frame $(e_1,\dots,e_n)$
and the associated dual cotangent frame $(e_1^*,\dots,e_n^*)$ of $M$.\qed
\end{cor}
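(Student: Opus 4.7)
The plan is to reduce the corollary directly to \pref{prop:guterZshg}. It suffices to verify that the fiberwise linear bundle map $\mathcal{A}_D$ is onto; the proposition then immediately produces a connection $\nabla$ on $E$ satisfying the stated formula $D = \sum_j \sigma_D(e_j^*)\circ\nabla_{e_j}$.

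Fiberwise surjectivity of $\mathcal{A}_D$ is a short linear-algebra exercise using ellipticity. Fix $x\in M$ and $U\in\Hom(E_x,F_x)$, and choose a local tangent frame $(e_1,\dots,e_n)$ near $x$ with associated dual cotangent frame $(e_1^*,\dots,e_n^*)$. Since $e_1^*\ne 0$ and $D$ is elliptic, $\sigma_D(e_1^*)\colon E_x\to F_x$ is a linear isomorphism. Define $V\in T^*_xM\otimes\Hom(E_x,E_x)$ by $V(e_1):=\sigma_D(e_1^*)^{-1}\circ U$ and $V(e_j):=0$ for $j\ge 2$. Then
\[
\mathcal{A}_D(V) \;=\; \sum_j \sigma_D(e_j^*)\circ V(e_j) \;=\; \sigma_D(e_1^*)\circ\sigma_D(e_1^*)^{-1}\circ U \;=\; U,
\]
showing that $U$ lies in the image of $\mathcal{A}_D$ at $x$.

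With $\mathcal{A}_D$ shown to be onto, \pref{prop:guterZshg} applies verbatim and yields the desired connection $\nabla$, which proves the corollary. There is essentially no obstacle here: all real content has been packaged into \pref{prop:guterZshg} (whose proof is deferred to Appendix~\ref{sec:beweise}), and ellipticity is invoked only to invert a single principal symbol in order to hit a prescribed target in $\Hom(E,F)$. The only small point worth noting is that the construction of $V$ depends on the chosen frame, but $\mathcal{A}_D$ itself is frame-independent by the remark preceding \pref{prop:guterZshg}, so the surjectivity conclusion is intrinsic.
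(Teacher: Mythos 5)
Your proposal is correct and matches the paper's own argument exactly: the paper also verifies surjectivity of $\mathcal{A}_D$ by setting $V(e_1)=\sigma_D(e_1^*)^{-1}\circ U$ and $V(e_j)=0$ for $j\ge 2$, then invokes \pref{prop:guterZshg}. Nothing is missing.
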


In the special case of Dirac-type operators (see definition below), this corollary is \cite[Lemma~2.1]{AT}.
\pref{prop:guterZshg} is also useful for nonelliptic operators.
For instance, it applies to Dirac-type operators on Lorentzian manifolds; these are hyperbolic instead of elliptic.

\section{Dirac-type operators}
\label{susedity}

From now on we concentrate on an important special class of first-order elliptic operators.

\subsection{Clifford relations and Dirac-type operators}

We say that a differential operator $D: C^\infty(M,E) \to C^\infty(M,F)$ of order one
is of {\em Dirac type} if its principal symbol $\sigma_D$
satisfies the {\em Clifford relations},
\begin{align}
  \sigma_D(\xi)^*\sigma_D(\eta) + \sigma_D(\eta)^*\sigma_D(\xi)
  &= 2\<\xi,\eta\> \cdot \id_{E_x} ,
  \label{Cliff1} \\
  \sigma_D(\xi)\sigma_D(\eta)^* + \sigma_D(\eta)\sigma_D(\xi)^*
  &= 2\<\xi,\eta\> \cdot \id_{F_x} ,
  \label{Cliff2}
\end{align}
for all $x\in M$ and $\xi,\eta \in T^*_xM$.

The classical Dirac operator on a spin manifold is an important example.
More generally, the class of Dirac-type operators contains Dirac operators
on Dirac bundles as in  \cite[Ch.~II, \S~5]{LM}.

By \eqref{symbad}, if $D$ is of Dirac type, then so is $D^*$.
Furthermore, by \eqref{Cliff1} and \eqref{Cliff2},
Dirac-type operators are elliptic with
\begin{equation}
  \sigma_D(\xi)^{-1} = |\xi|^{-2}\sigma_D(\xi)^{*} ,
  \text{  for all nonzero $\xi\in T^*M$.}
  \label{symbdt}
\end{equation}
If $D$ is a formally self-adjoint operator of Dirac type on $E$,
then the endomorphisms $\sigma_{D}(\xi)$ are skewhermitian, $\xi\in T^*M$.
In this case, the Clifford relations \eqref{Cliff1} and \eqref{Cliff2}
may be spelled out as
\begin{equation*}
  \sigma_D(\xi)\sigma_D(\eta) + \sigma_D(\eta)\sigma_D(\xi)
= -2\<\xi,\eta\> \cdot \id_{E_x} ,
\end{equation*}
for all $x\in M$ and $\xi,\eta \in T^*_xM$.
In other words, the principal symbol turns $E$ into a bundle of modules
over the Clifford algebras $\Cliff(T^*M)$.

\begin{prop}[Weitzenb\"ock formula]\label{Weitzen}
Let $D:C^\infty(M,E) \to C^\infty(M,F)$ be of Dirac type.
Then there exists a unique metric connection $\nabla$ on $E$ with
\begin{equation}
D^*D = \nabla^*\nabla + \mathcal{K} ,
\label{eq:Weitzenboeck}
\end{equation}
where $\mathcal{K}$ is a field of symmetric endomorphisms of $E$.
\end{prop}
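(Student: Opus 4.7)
The plan rests on two observations. First, by \eqref{symbad}, \eqref{symcom}, and the Clifford relation \eqref{Cliff1}, the principal symbol of $D^*D$ equals $-|\xi|^2\id_{E_x}$, which by \eqref{symcl} coincides with that of $\nabla^*\nabla$ for \emph{any} connection $\nabla$ on $E$; hence $D^*D-\nabla^*\nabla$ is automatically a differential operator of order at most one. Second, metric connections on $E$ form an affine space modelled on one-forms on $M$ with values in the bundle of skew-Hermitian endomorphisms of $E$. The strategy is to exploit this freedom to cancel the first-order part of $D^*D-\nabla^*\nabla$; the remaining zero-order operator will then automatically be symmetric.

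For existence, fix some metric connection $\nabla^0$ on $E$ and write a general metric connection as $\nabla=\nabla^0+A$, where $A(X)$ is skew-Hermitian for every vector field $X$. A short local calculation in a $\nabla^{TM}$-synchronous orthonormal tangent frame at a point yields
\begin{equation*}
  \nabla^*\nabla = (\nabla^0)^*\nabla^0 - 2\sum\nolimits_j A(e_j)\,\nabla^0_{e_j} + R_A,
\end{equation*}
with $R_A$ a field of endomorphisms of $E$. Thus the principal symbol of the first-order part of $\nabla^*\nabla-(\nabla^0)^*\nabla^0$ is $\xi\mapsto -2A(\xi^\sharp)$, and the map $A\mapsto(\xi\mapsto -2A(\xi^\sharp))$ is fibrewise an $\R$-linear isomorphism on skew-Hermitian-valued one-forms via musical duality. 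On the other hand, $T:=D^*D-(\nabla^0)^*\nabla^0$ is formally self-adjoint as a difference of such operators, so \eqref{symbad} forces its principal symbol $\sigma_T(\xi)$ to be skew-Hermitian at each covector. Hence the algebraic equation $\sigma_T(\xi)=-2A(\xi^\sharp)$ admits a unique skew-Hermitian-valued solution $A$, and $\nabla:=\nabla^0+A$ then renders $\mathcal{K}:=D^*D-\nabla^*\nabla$ of order zero.

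The operator $\mathcal{K}$ is formally self-adjoint as a difference of formally self-adjoint operators, and a zero-order formally self-adjoint operator is fibrewise symmetric; this yields the symmetry of $\mathcal{K}$. For uniqueness, any other metric connection $\tilde\nabla=\nabla+A'$ satisfying the identity forces $\nabla^*\nabla-\tilde\nabla^*\tilde\nabla$ to be of order zero; the displayed formula (with $\nabla$ in place of $\nabla^0$) then shows that the principal symbol $\xi\mapsto -2A'(\xi^\sharp)$ vanishes, whence $A'=0$.

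The main technical point is the local derivation of the displayed identity and the identification of its first-order part; the rest of the argument is pointwise linear algebra together with the general fact, immediate from \eqref{symbad}, that formally self-adjoint first-order operators have skew-Hermitian principal symbol.
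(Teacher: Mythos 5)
Your proof is correct and follows essentially the same route as the paper: fix an arbitrary metric connection, note that $D^*D$ minus any connection Laplacian is formally self-adjoint of order at most one (the order-two symbols cancel by the Clifford relations), compute that the first-order part contributed by shifting the connection by a skew-Hermitian-valued one-form $A$ has symbol $\xi\mapsto -2A(\xi^\sharp)$, and solve for $A$ uniquely using that the principal symbol of a formally self-adjoint first-order operator is skew-Hermitian by \eqref{symbad}. The only cosmetic difference is that you organize the symbol computation via a synchronous frame, whereas the paper evaluates $\sigma_{\nabla^*B+B^*\nabla}(\xi)$ by a direct pairing argument; the conclusion $B_X=-\tfrac12\sigma_F(X^\flat)$ is the same.
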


See Appendix~\ref{sec:beweise} for the proof.
For special choices for $D$, this formula is also known as Bochner formula, Bochner-Kodaira formula or Lichnerowicz formula.

In general, the connections in \cref{cor:guterZshg} and \pref{Weitzen} do not coincide.

\subsection{Adapted operators on the boundary}
\label{subsec:adapted}

Suppose from now on that $D$ is of Dirac type.
For $x\in\partial M$, identify $T^*_x\partial M$
with the space of covectors $\xi$ in $T^*_xM$ such that $\xi(\nu(x))=0$.
Then, by \eqref{Cliff1} and \eqref{symbdt},
\begin{equation}
  \sigma_D(\nu(x)^\flat)^{-1} \circ \sigma_D(\xi): E_x \to E_x 
\label{eq:symbDT}
\end{equation}
is skewhermitian, for all $x\in\partial M$ and $\xi\in T^*_x\partial M$.
Hence there exist formally self-adjoint differential operators
$A:C^\infty(\dM,E)\to C^\infty(\dM,E) $ of first order with principal symbol 
\begin{equation}
  \sigma_A(\xi) = \sigma_D(\nu(x)^\flat)^{-1} \circ \sigma_D(\xi) .
  \label{symba}
\end{equation}
We call such operators {\em adapted} to $D$.
Note that such an operator $A$ is also of Dirac type
and that the zero-order term of $A$ is only unique up to addition of a field
of hermitian endomorphisms of $E$.
By \eqref{symbad} and \eqref{eq:symbDT} applied to $D^*$,
the principal symbol of an operator $\tilde A$ adapted to $D^*$ is
\begin{equation*}
\sigma_{\tilde A}({\xi}) 
=
(-\sigma_D(\nu(x)^\flat)^{-1})^* \circ (-\sigma_D(\xi))^{*}
=
\sigma_D(\nu(x)^\flat) \circ \sigma_D(\xi)^{*} .
\end{equation*}
By \eqref{symba}, this implies
\begin{align*}
\sigma_{\tilde A}({\xi}) 
&=
\sigma_D(\nu(x)^\flat) \circ (\sigma_D(\nu(x)^\flat)\circ\sigma_A(\xi) )^* \\
&=
\sigma_D(\nu(x)^\flat) \circ \sigma_A(\xi)^*\circ\sigma_D(\nu(x)^\flat)^*\\
&=
\sigma_D(\nu(x)^\flat) \circ \sigma_{-A}(\xi)\circ\sigma_D(\nu(x)^\flat)^{-1}.
\end{align*}
Hence, if $A$ is adapted to $D$, then
\begin{equation}
  \tilde A = \sigma_D(\nu^\flat) \circ (-A) \circ \sigma_D(\nu^\flat)^{-1}
  \label{atilde}
\end{equation}
is adapted to $D^*$.
Given $A$, this choice of $\tilde A$ is the most natural one.

\subsection{Formally self-adjoint Dirac-type operators}

If the Dirac-type operator $D$ is formally self-adjoint, then there is a particularly useful choice of adapted boundary operator $A$.

\begin{lemma}\label{norma}
Let $D:C^\infty(M,E)\to C^\infty(M,E)$ be a formally self-adjoint operator of Dirac type.
Then there is an operator $A$ adapted to $D$ along $\partial M$
such that $\sigma_D(\nu^\flat)$ anticommutes with $A$,
\begin{equation}
  \sigma_D(\nu^\flat)\circ A = - A\circ \sigma_D(\nu^\flat) .
\label{eq:ANuAnti}
\end{equation}
\end{lemma}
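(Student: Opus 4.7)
The plan is to take any adapted operator $A_0$ (whose existence is guaranteed by the discussion preceding the lemma) and symmetrize it against the bundle endomorphism $N := \sigma_D(\nu^\flat)|_{\partial M}$ so as to enforce anticommutation with $N$. The algebraic tools are all immediate consequences of the hypothesis that $D$ is formally selfadjoint of Dirac type: $N^{*}=-N$ (the principal symbol of a formally selfadjoint Dirac-type operator is skewhermitian), $N^{2}=-\id$ (Clifford relation with $\xi=\eta=\nu^\flat$ and $|\nu^\flat|=1$), and, for every $\xi\in T^{*}\partial M$, the anticommutation $N\sigma_D(\xi)=-\sigma_D(\xi)N$ (Clifford relation with $\xi\perp\nu^\flat$).

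Concretely, I would set
\[
 A := \tfrac12\bigl(A_0 + NA_0 N\bigr),
\]
understood as a sum of first-order operators on $C^\infty(\partial M,E)$, where the zero-order factor $N$ acts by fiberwise multiplication. Three things then have to be verified: that $A$ has the correct principal symbol (so that it is adapted), that it is formally selfadjoint, and that it anticommutes with $N$. For the symbol, $\sigma_{A_0}(\xi)=N^{-1}\sigma_D(\xi)=-N\sigma_D(\xi)$, and then $N\sigma_D(\xi)=-\sigma_D(\xi)N$ gives
\[
 N\sigma_{A_0}(\xi)N = -N^{2}\sigma_D(\xi)N = \sigma_D(\xi)N = -N\sigma_D(\xi) = \sigma_{A_0}(\xi),
\]
so $\sigma_A(\xi)=\sigma_{A_0}(\xi)$. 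Formal selfadjointness is immediate: $(NA_0 N)^{*}=N^{*}A_0^{*}N^{*}=(-N)A_0(-N)=NA_0 N$, hence $A^{*}=A$. Finally, using $N^{2}=-\id$,
\[
 NA+AN = \tfrac12\bigl(NA_0 + N^{2}A_0 N + A_0 N + NA_0 N^{2}\bigr)
 = \tfrac12\bigl(NA_0 - A_0 N + A_0 N - NA_0\bigr) = 0,
\]
which is exactly \eqref{eq:ANuAnti}.

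There is no genuine obstacle here; the proof is a short algebraic manipulation once the three Clifford identities are in place. The one point requiring care is to keep a clean distinction between operator composition and fiberwise multiplication by the endomorphism $N$, because the derivation of the principal symbol uses the symbolic Clifford anticommutation while the verification of $NA+AN=0$ uses the pointwise identity $N^{2}=-\id$. With this distinction in mind, the construction produces the desired $A$ from an arbitrary adapted starting operator $A_0$.
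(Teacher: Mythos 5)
Your proposal is correct and is essentially the same proof as the paper's. The paper writes $A = A_0 - S$ with $2S = A_0 + \sigma_D(\nu^\flat)A_0\sigma_D(\nu^\flat)^*$; since $\sigma_D(\nu^\flat)^* = -\sigma_D(\nu^\flat) = N^{-1}$, this is exactly your $A = \tfrac12(A_0 + NA_0N)$, and the verifications (symbol, formal selfadjointness, anticommutation) proceed by the same Clifford-algebra identities, the only cosmetic difference being that the paper packages the correction term as a zero-order operator $S$ whereas you compute directly.
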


See Appendix~\ref{sec:beweise} for the proof.

\begin{remarks}\label{remnorma}
\begin{inparaenum}[1)]
\item 
The operator $A$ in \lref{norma} is unique up to addition of a field
of symmetric endomorphisms of $E$ along $\partial M$
which anticommutes with $\sigma_D(\nu^\flat)$. \\
\item
If $A$ anticommutes with $\sigma_D(\nu^\flat)$,
then $\sigma_D(\nu^\flat)$ induces isomorphisms
between the $\pm\lambda$-eigenspaces of $A$, for all $\lambda\in\R$.
In particular, $\ker A$ is invariant under $\sigma_D(\nu^\flat)$ and the $\eta$-invariant\footnote{The $\eta$-invariant of $A$ is defined as the value of the meromorphic extension of $\eta(s)=\sum_{\lambda\neq0}\mathrm{sign}(\lambda)|\lambda|^{-s}$ at $s=0$, see \cite{APS}. Here the sum is taken over all nonzero eigenvalues of $A$ taking multiplicities into account. Hence the $\eta$-invariant is a measure for the asymmetry of the spectrum.} of $A$ vanishes.
Moreover,
\begin{equation*}
  \omega(\varphi,\psi) := (\sigma_D(\nu^\flat)\varphi,\psi)_{L^2(\dM)}
\end{equation*} 
is a nondegenerate skewhermitian form on $\ker A$ (and also on $L^2(\partial M,E)$).
\end{inparaenum}
\end{remarks}

\section{Boundary value problems}
\label{secdity}

In this section we will study boundary value problems.
This will be done under the following

\begin{stase}\label{setup}
\hspace{10mm}
\vspace{-3mm}
\begin{itemize}[$\diamond$]
\item 
$M$ is a complete Riemannian manifold with compact boundary $\dM$;
\item
$\nu$ is the interior unit normal vector field along $\dM$;
\item
$E$ and $F$ are Hermitian vector bundles over $M$;
\item
$D:C^\infty(M,E)\to C^\infty(M,F)$ is a Dirac-type operator;
\item
$A:C^\infty(\dM,E)\to C^\infty(\dM,E)$ is a boundary operator adapted to $D$.
\end{itemize}
\end{stase}

\subsection{Spectral subspaces}

If $A$ is adapted to $D$, then $A$ is a formally self-adjoint elliptic operator over the compact manifold $\dM$.
Hence we have, in the sense of Hilbert spaces,
\begin{equation*}
  L^2(\partial M,E) = \oplus_j\, \C\cdot\varphi_j ,
\end{equation*}
where $(\varphi_j)$ is an orthonormal basis of $L^2(\partial M,E)$ consisting of eigensections of $A$, $A\varphi_j=\lambda_j\varphi_j$.
In terms of such an orthonormal basis, 
the Sobolev space $H^s(\partial M,E)$, $s\in\R$, 
consists of all sections
\begin{equation*}
  \varphi = \sum\nolimits_j a_j\varphi_j
  \quad\text{such that}\quad
  \sum\nolimits_j |a_j|^2(1+\lambda_j^{2})^s < \infty ,
\end{equation*}
where $L^2(\partial M,E)=H^0(\partial M,E)$.
The natural pairing
\begin{equation}
  H^s(\partial M,E) \times H^{-s}(\partial M,E) \to \C, \quad
  \big(\sum\nolimits_j a_j\phi_j,\sum\nolimits_j b_j\phi_j\big)
  = \sum\nolimits_j \bar a_jb_j ,
  \label{perfect}
\end{equation}
is perfect, for all $s\in\R$.
By the Sobolev embedding theorem,
\begin{equation*}
  C^\infty(\partial M,E) =  \bigcap_{s\in\R}H^s(\partial M,E) .
\end{equation*}
Rellich's embedding theorem says that for $s_1>s_2$ the embedding
\begin{equation*}
 H^{s_1}(\dM,E) \hookrightarrow H^{s_2}(\dM,E)
\end{equation*}
is compact.
We also set
\begin{equation*}
  H^{-\infty}(\partial M,E) := \bigcup_{s\in\R}H^s(\partial M,E) .
\end{equation*}
For $I\subset\R$, let $Q_I$ be the associated spectral projection,
\begin{equation}
  Q_I: \sum\nolimits_j a_j\varphi_j \mapsto
  \sum\nolimits_{\lambda_j\in I} a_j\varphi_j .
  \label{specpro}
\end{equation}
Then $Q_I$ is orthogonal and maps $H^s(\partial M,E)$ to itself,
for all $s\in\R$.
Set
\begin{equation*}
  H^s_I(A) :=Q_I(H^s(\partial M,E)) \subset H^s(\partial M,E) .
\end{equation*}
For $a\in\R$, define the hybrid Sobolev spaces
\begin{align}
  \check H(A) &:=H^{1/2}_{(-\infty,a)}(A) \oplus H^{-1/2}_{[a,\infty)}(A) , 
  \label{checka}\\
  \hat H(A) &:=H^{-1/2}_{(-\infty,a)}(A) \oplus H^{1/2}_{[a,\infty)}(A) .
  \label{hata}
\end{align}
Note that, as topological vector spaces,
$\check H(A)$ and $\hat H(A)$ do not depend on the choice of $a$.
In particular,
\begin{equation*}
  \hat H(A) = \check H(-A) .
\end{equation*}
Moreover, the natural pairing
\begin{equation*}
  \check H(A) \times \check H(-A)\to\C , \quad
  \big(\sum\nolimits_j a_j\phi_j,\sum\nolimits_j b_j\phi_j\big)
  = \sum\nolimits_j \bar a_jb_j ,
\end{equation*}
is perfect, compare \eqref{perfect}.

\subsection{The maximal domain}

Following \cite[Cor.~6.6, Thm.~6.7, Prop.~7.2]{BB}, we now discuss properties of the maximal domain of $D$.

\usetagform{simple}

\begin{thm}\label{maxdom}
Assume the \setup.
Then the domain of $D_{\max}$, equipped with the graph norm topology,
has the following properties:
\begin{asparaenum}[1)]
\item\label{ccr}
$C^\infty_c(M,E)$ is dense in $\dom D_{\max}$;
\item\label{trace} 
the trace map $\mathcal R\Phi:=\Phi|_{\partial M}$ on $C^\infty_c(M,E)$
extends uniquely to a continuous surjection $\mathcal R: \dom D_{\max} \to \check H(A)$;
\item\label{maxmin}
$\dom D_{\min} = \{ \Phi\in\dom D_{\max} \mid \mathcal R\Phi = 0 \}$.
In particular, $\mathcal R$ induces an isomorphism
\begin{equation*}
  \check H(A) \cong \dom D_{\max}/\dom D_{\min} ;
\end{equation*}
\item\label{boucon}
for any closed subspace $B\subset\check H(A)$, the operator $D_{B,\max}$
with domain 
\begin{equation*}
  \dom D_{B,\max} = \{ \Phi\in\dom D_{\max} \mid \mathcal R\Phi \in B \}
\end{equation*}
is a closed extension of $D$ between $D_{\min}$ and $D_{\max}$,
and any closed extension of $D$ between $D_{\min}$ and $D_{\max}$ is of this form;
\item\label{parintmd}
for all $\Phi\in\dom D_{\max}$ and $\Psi\in\dom D^*_{\max}$,
\begin{equation*}
  \int_M \langle D_{\max}\Phi,\Psi\rangle\dV 
  = \int_M \langle\Phi,D^*_{\max}\Psi\rangle\dV
  - \int_{\partial M} \langle \sigma_D(\nu^\flat)\mathcal R\Phi,\mathcal R\Psi \rangle\dS .
\tag{$\blacksquare$}
\end{equation*}
\end{asparaenum}
\end{thm}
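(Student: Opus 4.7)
My plan is to localize near the boundary via a collar, bring $D$ to a normal form with constant principal symbol in the normal direction, and then exploit the spectral decomposition of the adapted operator $A$ to reduce everything to scalar ODEs in the normal variable. Pick a smooth collar $\dM\times[0,\epsilon)\hookrightarrow M$ via the normal exponential map and trivialize $E$ by parallel transport along normal geodesics with respect to some metric connection on $E$. In this trivialization one checks that
\[
  D = \sigma_D(\nu^\flat)\bigl(\partial_t + A_t\bigr),
\]
where $t$ is the normal coordinate, $A_0$ agrees with $A$ after adjusting the zero-order term, and $t\mapsto A_t$ is a smooth family of first-order elliptic tangential operators on $\dM$. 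All five statements reduce, after a cutoff, to the model problem on the half-cylinder with the constant-coefficient operator $D_0=\sigma_D(\nu^\flat)(\partial_t+A)$.

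For (1), I would split $\Phi\in\dom\Dmax$ as $\chi\Phi+(1-\chi)\Phi$ using a cutoff $\chi$ supported in the collar with $\chi\equiv 1$ near $\dM$. The interior piece is approximated in graph norm by elements of $\Cucc(M,E)$ using the completeness of $M$: combine an exhaustion by compact sets, a sequence of Lipschitz cutoffs whose differentials tend to zero uniformly (existence requires completeness), and Friedrichs mollifiers, whose commutator with the first-order operator $D$ is $L^2$-bounded. For the collar piece, expand $\chi\Phi=\sum_j a_j(t)\varphi_j$ and approximate by truncating to finitely many modes and smoothing the resulting coefficient functions in $t$ away from $t=0$; an interior cutoff then yields an element of $\Cuc_c(M,E)$.

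The heart of the theorem is (2). The spectral expansion turns $D(\chi\Phi)$, modulo lower-order terms controlled by the graph norm, into the scalar system $a_j'(t)+\lambda_j a_j(t)=h_j(t)$ on $[0,\epsilon)$ with $\sum_j\int(|a_j|^2+|h_j|^2)\,dt<\infty$. Explicit Duhamel formulas, split according to $\lambda_j\geq a$ and $\lambda_j<a$, give the trace estimate
\[
  \sum_{\lambda_j\geq a}|a_j(0)|^2(1+\lambda_j^2)^{-1/2} + \sum_{\lambda_j<a}|a_j(0)|^2(1+\lambda_j^2)^{1/2} \leq C\,\|\Phi\|_{\mathrm{gr}}^{\,2},
\]
which is precisely continuity of $\mathcal R$ into $\check H(A)$. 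Surjectivity follows from the explicit lift $\varphi\mapsto\chi(t)\,e^{-t(|A|+1)}\varphi$, for which the same spectral book-keeping yields $L^2$-control of both the lift and $D_0$ applied to it in terms of the $\check H(A)$-norm of $\varphi$. The technical core lies exactly in matching these weighted sequence-norms to the graph norm through the Duhamel integrals, with the two regimes $\lambda_j\geq a$ and $\lambda_j<a$ producing the two different Sobolev exponents in the definition of $\check H(A)$.

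With (1) and (2) established, the classical Green's formula \pref{greenfor} extends by density and continuity of $\mathcal R$ to the version (5), using that the boundary pairing $\check H(A)\times\check H(-A)\to\C$ induced by $\sigma_D(\nu^\flat)$ is continuous (a variant of the perfect pairing \eqref{perfect}, since by \eqref{atilde} conjugation by $\sigma_D(\nu^\flat)$ relates $A$ to $-\tilde A$). Part (3) then follows from the identity $\Dmin=(D_{\max}^*)^*$: if $\mathcal R\Phi=0$, then (5) gives $\langle\Dmax\Phi,\Psi\rangle=\langle\Phi,D_{\max}^*\Psi\rangle$ for every $\Psi\in\dom D_{\max}^*$, so $\Phi\in\dom\Dmin$; the reverse inclusion is immediate from density of $\Cucc$ in $\dom\Dmin$. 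Finally (4) is the abstract statement that closed intermediate extensions of $\Dmin\subset\Dmax$ correspond bijectively to closed subspaces of the quotient $\dom\Dmax/\dom\Dmin$, which by (2) and (3) is identified with $\check H(A)$ via $\mathcal R$.
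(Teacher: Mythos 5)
Your proposal is essentially correct and follows the same strategy as the cited reference [BB] (Cor.~6.6, Thm.~6.7, Prop.~7.2): collar normal form $D=\sigma_D(\nu^\flat)(\partial_t+A_t)$, reduction to the constant-coefficient model operator on the half-cylinder, spectral decomposition of $A$, Duhamel estimates on the scalar ODEs $a_j'+\lambda_j a_j=h_j$ with the two regimes $\lambda_j\geq a$ and $\lambda_j<a$ producing exactly the hybrid exponents $\mp 1/2$, the explicit lift $\chi(t)e^{-t(|A|+1)}\varphi$ for surjectivity, and then (5)$\Rightarrow$(3)$\Rightarrow$(4) via $D_{\min}=(D^*_{\max})^*$ and the abstract correspondence between closed intermediate extensions and closed subspaces of the quotient. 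The one step you compress into ``one checks that \ldots{} all five statements reduce to the model problem'' is where much of the real work in [BB] hides: one must show that $D-D_0$ is $D_0$-bounded with small relative bound on a sufficiently thin collar (so that $\dom D_{\max}$ and $\dom D_{0,\max}$ agree there), which in particular is needed to justify that mode truncation converges in graph norm for the actual operator $D$ and not just for $D_0$; but your estimates and the overall architecture are the right ones.
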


\usetagform{default}

\begin{remark}
As a topological vector space, $\check H(A)$ does not depend on the choice
of adapted operator $A$, by \tref{maxdom}.\ref{maxmin}.
The pairing in \tref{maxdom}.\ref{parintmd} is well defined because $\sigma_D(\nu^\flat)$ maps $\check H(A)$ to $\hat H(A)$ by \eqref{atilde}.
\end{remark}

\usetagform{simple}

\begin{thm}[{Boundary regularity I, \cite[Thm.~6.11]{BB}}]\label{regbou}
Assume the \setup.
Let $k\ge0$ be an integer and $\Phi\in\dom D_{\max}$.
Then
\begin{equation*}
 \Phi \in H^{k+1}_{\loc}(M,E)
\Longleftrightarrow
D\Phi\in H^{k}_{\loc}(M,F) \mbox{ and } Q_{[0,\infty)}\mathcal R\Phi\in H^{k+1/2}(\partial M,E).
\end{equation*}
In particular, 
\begin{equation*}
 \Phi\in H^1_{\loc}(M,E) \Longleftrightarrow Q_{[0,\infty)}\mathcal R\Phi\in H^{1/2}(\partial M,E).
\tag{$\blacksquare$}
\end{equation*}
\end{thm}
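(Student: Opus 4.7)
My plan is to localize the question to a collar neighborhood of $\dM$ and reduce it to an ODE in the normal direction whose coefficient is the boundary operator $A$. Away from the boundary, interior elliptic regularity (already recorded in \sref{secIntro}) handles matters immediately, so only a neighborhood of $\dM$ requires work. On a collar $U \cong \dM \times [0,\eps)$ with $\nu = \partial_t$, using \eqref{symba} and \eqref{symbdt} I can put $D$ into the normal form
\begin{equation*}
  D = \sigma_D(\nu^\flat)\bigl(\partial_t + A_t + R_t\bigr),
\end{equation*}
where $A_t$ is a smooth family of first-order operators on $\dM$ with $A_0 = A$, and $R_t$ is of order zero; such a form is available after choosing an adapted connection as in \cref{cor:guterZshg} and parallel transport along $\partial_t$.

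Given $\Phi \in \dom D_{\max}$, write $\Theta := -\sigma_D(\nu^\flat)^{-1}D\Phi$, so $\Phi$ satisfies the evolution equation $\partial_t \Phi = -A_t\Phi - R_t\Phi + \Theta$ on $U$. Expand $\Phi(t,\cdot) = \sum_j a_j(t)\phi_j$ in the eigenbasis of $A$, with $A\phi_j = \lambda_j \phi_j$. Freezing the coefficients (i.e.\ replacing $A_t$ by $A$ and absorbing the difference plus $R_t$ into the inhomogeneity), the system decouples as $\dot a_j + \lambda_j a_j = b_j$, with explicit solution
\begin{equation*}
  a_j(t) = e^{-\lambda_j t} a_j(0) + \int_0^t e^{-\lambda_j(t-s)} b_j(s)\,ds .
\end{equation*}
This formula produces a clean spectral dichotomy. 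For $\lambda_j \ge 0$ the homogeneous factor $e^{-\lambda_j t}$ is bounded, and $a_j(0)=(\mathcal R\Phi)_j$ can carry any regularity; the standard trace-theorem bookkeeping on a half-cylinder shows $\Phi$ is $H^{k+1}$ near $\dM$ iff $Q_{[0,\infty)}\mathcal R\Phi$ is $H^{k+1/2}$ (Sobolev trace gain of $1/2$) and $\Theta$ is $H^k$. For $\lambda_j < 0$ the factor $e^{-\lambda_j t}$ grows as $t$ moves into the collar; requiring $\Phi \in L^2(U,E)$ forces the would-be free data $a_j(0)$ to be determined, up to an $L^2$-controlled Duhamel term, by $\Theta$, so the negative-spectral part of $\mathcal R\Phi$ automatically inherits the regularity of $\Phi$ and $\Theta$ and needs no hypothesis.

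Quantitatively, summing in $j$ with the natural $\ell^2$-weights $(1+\lambda_j^2)^{s}$ yields an equivalence
\begin{equation*}
  \|\Phi\|_{H^{k+1}(U,E)} \;\asymp\; \|\Phi\|_{L^2(U,E)} + \|D\Phi\|_{H^k(U,F)} + \|Q_{[0,\infty)}\mathcal R\Phi\|_{H^{k+1/2}(\dM,E)},
\end{equation*}
which is exactly the claim on the collar; combined with interior regularity it proves both directions. Setting $k=0$ gives the "in particular" statement. The main obstacle is the non-constant coefficient $A_t$ together with the remainder $R_t$: they generate off-diagonal terms in the eigenbasis and cannot simply be ignored. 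I would deal with them by a bootstrap. First establish $k=0$ with a frozen-coefficient estimate, treating the commutator $[A_t - A, \,\cdot\,] + R_t$ as a perturbation on a sufficiently short collar so that a Neumann/contraction argument applies. Then, knowing $\Phi \in H^1$, commute $D$ with tangential derivatives and with $\partial_t$ to gain one order at a time, using $D\Phi \in H^k_\loc$ as the driving inhomogeneity; each step reduces to the already-proven $k=0$ case for the differentiated section.
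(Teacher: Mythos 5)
Your proposal is essentially the approach taken in the cited source [BB, Secs.~5--6]: put $D$ into the collar normal form $\sigma_D(\nu^\flat)(\partial_t + A_t + R_t)$, analyze the frozen-coefficient model operator $\sigma(\partial_t + A)$ on a half-cylinder by eigenbasis expansion and the resulting ODEs to obtain the spectral dichotomy (the nonnegative spectral part of the boundary trace is free data whose $H^{k+1/2}$-regularity must be imposed, while the negative spectral part is automatically $H^{k+1/2}$ once $\Phi \in L^2$ and $D\Phi \in H^k$, thanks to the exponentially decaying weight $e^{\lambda_j t}$), and then transfer to the genuine operator by treating $A_t - A$, $R_t$, and the $t$-dependence of $\sigma$ as small perturbations on a short collar and bootstrapping in $k$. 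One small caution for the write-up: your bootstrap should commute first with operators that respect the spectral projection $Q_{[0,\infty)}$ (e.g.\ powers of $(1+A^2)^{1/2}$ rather than arbitrary tangential derivatives) and recover normal regularity from the equation itself, since $\partial_t$ and generic $\nabla^\tau$ do not preserve the boundary data bookkeeping.
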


\usetagform{default}

Note that $Q_{[0,\infty)}\mathcal R\Phi\in H^{1/2}(\partial M,E)$
if and only if $\mathcal R\Phi\in H^{1/2}(\partial M,E)$,
by \eqref{checka} and \tref{maxdom}.\ref{trace}.

\subsection{Boundary conditions}

\tref{maxdom}.\ref{boucon} justifies the following

\begin{definition}
A {\em boundary condition} for $D$ is a closed subspace of $\check H(A)$.
\end{definition}

In the notation of \tref{maxdom}.\ref{maxmin},
we write $D_{B,\max}$ for the operator with boundary values in a boundary condition $B$.
This differs from the notation of Atiyah-Patodi-Singer and others,
who would use a projection $P$ with $\ker P=B$ to write $P\mathcal R\Phi=0$.

\begin{thm}[{The adjoint operator, \cite[Sec.~7.2]{BB}}]\label{prodad}
Assume the \setup{} and that $B\subset\check H(A)$ is a boundary condition.
Let $\tilde A$ be adapted to $D^*$.
Then
\begin{equation*}
  \Bad := \{ \psi\in\check H(\tilde A) 
  \mid \text{$(\sigma _D(\nu^\flat)\phi,\psi)=0$, for all $\phi\in B$} \}
\end{equation*}
is a closed subspace of $\check H(\tilde A)$, that is, it is a boundary condition for $D^*$.
Moreover, the adjoint operator of $D_{B,\max}$ is the operator $D^*_{\Bad,\max}$.
\hfill$\blacksquare$
\end{thm}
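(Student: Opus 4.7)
The plan is to first verify the closedness of $\Bad$ from abstract duality, then prove the two inclusions $D^*_{\Bad,\max}\subset (D_{B,\max})^*$ and $(D_{B,\max})^*\subset D^*_{\Bad,\max}$ using Green's formula (\tref{maxdom}.\ref{parintmd}) together with the trace theorem (\tref{maxdom}.\ref{trace}).

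\textbf{Closedness of $\Bad$.} By \eqref{atilde}, the bundle isomorphism $\sigma_D(\nu^\flat):E|_{\dM}\to F|_{\dM}$ maps the $\lambda$-eigenspace of $A$ isomorphically onto the $(-\lambda)$-eigenspace of $\tilde A$. Consequently it interchanges the two summands in \eqref{checka} and \eqref{hata}, and hence induces a topological isomorphism $\sigma_D(\nu^\flat):\check H(A)\to\hat H(\tilde A)$. Since the natural pairing $\hat H(\tilde A)\times \check H(\tilde A)\to\C$ is perfect, for each fixed $\phi\in B$ the functional $\psi\mapsto(\sigma_D(\nu^\flat)\phi,\psi)$ is continuous on $\check H(\tilde A)$. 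Thus $\Bad$ is an intersection of closed hyperplanes, hence closed, and therefore a boundary condition for $D^*$.

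\textbf{Inclusion $D^*_{\Bad,\max}\subset (D_{B,\max})^*$.} Let $\psi\in\dom D^*_{\Bad,\max}$, so $\psi\in\dom D^*_{\max}$ with $\mathcal R\psi\in\Bad$, and let $\Phi\in\dom D_{B,\max}$, so $\mathcal R\Phi\in B$. By \tref{maxdom}.\ref{parintmd},
\begin{equation*}
  (D_{\max}\Phi,\psi)_{L^2(M)} - (\Phi,D^*_{\max}\psi)_{L^2(M)}
  = -\int_{\dM}\langle\sigma_D(\nu^\flat)\mathcal R\Phi,\mathcal R\psi\rangle\dS,
\end{equation*}
and the right-hand side vanishes by definition of $\Bad$. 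Hence $\psi$ lies in the domain of the adjoint of $D_{B,\max}$ and $(D_{B,\max})^*\psi=D^*_{\max}\psi=D^*_{\Bad,\max}\psi$.

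\textbf{Inclusion $(D_{B,\max})^*\subset D^*_{\Bad,\max}$.} Let $\psi\in\dom (D_{B,\max})^*$, with $\chi:=(D_{B,\max})^*\psi\in L^2(M,F)$, so that $(D_{B,\max}\Phi,\psi)=(\Phi,\chi)$ for every $\Phi\in\dom D_{B,\max}$. Applying this with $\Phi\in C^\infty_{cc}(M,E)\subset\dom D_{\min}\subset\dom D_{B,\max}$ shows that $\chi=D^*\psi$ distributionally, so $\psi\in\dom D^*_{\max}$ and $D^*_{\max}\psi=\chi$. Applying Green's formula again, the identity $(D_{\max}\Phi,\psi)=(\Phi,D^*_{\max}\psi)$ now forces
\begin{equation*}
  \int_{\dM}\langle\sigma_D(\nu^\flat)\mathcal R\Phi,\mathcal R\psi\rangle\dS=0
  \qquad\text{for all }\Phi\in\dom D_{B,\max}.
\end{equation*}
To pass from this to membership in $\Bad$, I invoke \tref{maxdom}.\ref{trace}: given any $\phi\in B$, surjectivity of $\mathcal R:\dom D_{\max}\to\check H(A)$ provides some $\Phi\in\dom D_{\max}$ with $\mathcal R\Phi=\phi$, and then automatically $\Phi\in\dom D_{B,\max}$. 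Thus $(\sigma_D(\nu^\flat)\phi,\mathcal R\psi)=0$ for every $\phi\in B$, i.e.\ $\mathcal R\psi\in\Bad$, so $\psi\in\dom D^*_{\Bad,\max}$.

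\textbf{Main obstacle.} The analytic heavy lifting, namely the trace/extension theorem \tref{maxdom}, is cited rather than proved; given this, the argument is essentially formal duality. The only point requiring genuine care is matching the Sobolev orders on the two sides of the boundary pairing, which is why the preparatory observation that $\sigma_D(\nu^\flat)$ swaps $\check H(A)$ and $\hat H(\tilde A)$ (with a perfect duality to $\check H(\tilde A)$) is essential both for closedness of $\Bad$ and for the vanishing of the boundary integrals.
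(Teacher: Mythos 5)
Your proof is correct and follows exactly the standard route one would expect (and which the cited reference [BB, Sec.~7.2] takes): reduce the statement to abstract duality once the trace/extension machinery of Theorem~\ref{maxdom} is in place. You correctly identified the one nontrivial analytic point, namely that $\sigma_D(\nu^\flat)$ sends $\check H(A)$ isomorphically onto $\hat H(\tilde A)=\check H(-\tilde A)$ so that the boundary pairing is well defined and perfect; this is what makes $\Bad$ closed and the boundary integral in Green's formula meaningful. The two inclusions are argued correctly, including the small but necessary observations that $C^\infty_{cc}(M,E)\subset\dom D_{B,\max}$ (since $0\in B$) and that surjectivity of $\mathcal R$ together with the definition of $D_{B,\max}$ lets you realize each $\phi\in B$ as a trace of some $\Phi\in\dom D_{B,\max}$.
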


\subsection{\emph{D}-elliptic boundary conditions}

For $V\subset H^{-\infty}(\partial M,E)$ and $s\in\R$, let
\begin{equation*}
  V^s := V \cap H^s(\partial M,E)) .
\end{equation*}
For subspaces $V,W\subset L^2(\partial M,E)$,
we say that a bounded linear operator $g:V\to W$ is of {\em order zero} if
\begin{equation*}
  g(V^s) \subset W^s ,
\end{equation*}
for all $s\ge0$.
For example, spectral projections $Q_I$ as in \eqref{specpro} are of order zero.

\begin{definition}\label{dell}
A linear subspace $B\subset H^{1/2}(\partial M,E)$
is said to be a {\em $D$-elliptic boundary condition}
if there is an $L^2$-orthogonal decomposition
\begin{equation}
  L^2(\dM,E) = V_- \oplus W_- \oplus V_+ \oplus W_+
  \label{elldec}
\end{equation}
such that
\begin{equation*}
  B = W_+ \oplus \{ v+gv \mid v \in V_-^{1/2} \} ,
\end{equation*}
where\\
\begin{inparaenum}[1)]
\item\label{ebcw}
$W_-$ and $W_+$ are finite dimensional and contained in $C^\infty(\dM,E)$;\\
\item\label{ebcv}
$V_- \oplus W_-\subset L^2_{(-\infty,a]}(A)$
and $V_+ \oplus W_+\subset L^2_{[-a,\infty)}(A)$, for some $a\in\R$;\\
\item\label{ebcg}
$g:V_-\to V_+$  and its adjoint $g^*:V_+\to V_-$ are operators of order 0.
\end{inparaenum}
\end{definition}

\begin{remarks}\label{remell}
\begin{inparaenum}[1)]
\item
$D$-elliptic boundary conditions are closed in $\check H(A)$,
and hence they are boundary conditions in the sense formulated further up.\\
\item\label{specialdecomposition}
If $B$ is a $D$-elliptic boundary condition and $a\in\R$ is given,
then the decomposition \eqref{elldec} can be chosen such that
\begin{equation*}
  V_- \oplus W_- = L^2_{(-\infty,a)}(\partial M,E)
  \quad\text{and}\quad
  V_+ \oplus W_+ = L^2_{[a,\infty)}(\partial M,E) .
\end{equation*}
\item
If $B$ is a $D$-elliptic boundary condition, then $\Bad$ is $D^*$-elliptic.
In fact, using $\tilde A$ as in \eqref{atilde}, we get
\begin{equation}
  \Bad = \sigma_D(\nu^\flat) \big(W_- \oplus \{ v-g^*v \mid v \in V_+^{1/2} \} \big) .
  \label{bad}
\end{equation}
\end{inparaenum}
\end{remarks}

The point of $D$-ellipticity of a boundary condition is that it ensures good regularity properties up to boundary just like ellipticity of the differential operator implies good regularity properties in the interior.

\begin{thm}[{Boundary regularity II, \cite[Thm.~7.17]{BB}}]\label{regula}
Assume the \setup{} and that $B\subset\check H(A)$ is a $D$-elliptic boundary condition.
Then
\begin{align*}
  \Phi\in H^{k+1}_{\loc}(M,E) &\Longleftrightarrow
  D_{B,\max}\Phi\in H^{k}_{\loc}(M,F),
\end{align*}
for all $\Phi\in\dom D_{B,\max}$ and integers $k\ge0$.
In particular,
$\Phi\in\dom D_{B,\max}$ is smooth up to the boundary
if and only if $D\Phi$ is smooth up to the boundary.
\hfill$\blacksquare$
\end{thm}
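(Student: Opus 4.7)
The forward direction $\Phi\in H^{k+1}_{\loc}(M,E)\Longrightarrow D\Phi\in H^k_{\loc}(M,F)$ is immediate, since $D$ is a first-order differential operator. The content is the converse, which I plan to prove by induction on $k\geq 0$. For the base case $k=0$, the hypothesis $\Phi\in\dom D_{B,\max}$ forces $\mathcal R\Phi\in B\subset H^{1/2}(\partial M,E)$, so $Q_{[0,\infty)}\mathcal R\Phi\in H^{1/2}(\partial M,E)$; combined with $D\Phi\in L^2_{\loc}$ this yields $\Phi\in H^1_{\loc}$ by \tref{regbou}.

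For the inductive step, assume the claim for $k-1$ and let $D\Phi\in H^k_{\loc}$. Since also $D\Phi\in H^{k-1}_{\loc}$, the inductive hypothesis gives $\Phi\in H^k_{\loc}$, and then the trace theorem yields $\mathcal R\Phi\in H^{k-1/2}(\partial M,E)$. By \tref{regbou} it suffices to upgrade this to $Q_{[0,\infty)}\mathcal R\Phi\in H^{k+1/2}(\partial M,E)$. Invoking Remark~\ref{remell}.\ref{specialdecomposition}, I would choose the decomposition so that $V_-\oplus W_-=L^2_{(-\infty,a)}(A)$ and $V_+\oplus W_+=L^2_{[a,\infty)}(A)$ for some $a<0$. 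Writing $\mathcal R\Phi=w+v+gv$ with $w\in W_+$ and $v\in V_-^{1/2}$, the choice $a<0$ ensures $Q_{[0,\infty)}v=0$, hence $Q_{[0,\infty)}\mathcal R\Phi=w+gv$ modulo a smooth finite-dimensional piece coming from $W_+$. Since $g$ is of order zero, the task reduces to establishing the improved regularity $v\in V_-^{k+1/2}$, which is one full derivative beyond the $V_-^{k-1/2}$ bound that the trace of $\Phi\in H^k_{\loc}$ directly provides.

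The crux, and the main obstacle, is this one-derivative gain for $v$. I plan to obtain it via a tangential regularity argument near $\partial M$: for smooth vector fields $X$ tangent to $\partial M$, extended tangentially into a collar neighborhood $[0,\varepsilon)\times\partial M$, form the difference quotients $\psi_h=h^{-1}(\Phi\circ F_h-\Phi)$, where $F_h$ is the flow of $X$ preserving the collar structure. Then $D\psi_h$ is bounded in $H^{k-1}_{\loc}$ uniformly in $h$, since it is controlled by the difference quotient of $D\Phi\in H^k_{\loc}$ together with a commutator term $[D,X]\Phi\in H^{k-1}_{\loc}$, and $\mathcal R\psi_h$ satisfies a boundary condition $B_h$ obtained from $B$ by conjugating the graph structure with the flow of $X|_{\partial M}$. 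Applying the inductive hypothesis to $\psi_h$ and letting $h\to 0$ yields tangential $H^{k+1}$ regularity for $\Phi$, hence $\mathcal R\Phi\in H^{k+1/2}(\partial M,E)$ and in particular $v\in V_-^{k+1/2}$; then $gv\in V_+^{k+1/2}$ by the order-zero property of $g$, so $Q_{[0,\infty)}\mathcal R\Phi\in H^{k+1/2}$, and \tref{regbou} delivers $\Phi\in H^{k+1}_{\loc}$. The hardest part will be checking that $B_h$ is genuinely $D$-elliptic with uniform control as $h\to 0$ — i.e., that the conjugated graph operator $g_h$ remains of order zero with bounded operator norms, and that the flow of $X$ interacts controllably with the spectral projections of $A$. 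This amounts to delicate commutator estimates between $g$, the spectral projections of $A$, and tangential differential operators, in the spirit of the argument in [BB, Thm.~7.17].
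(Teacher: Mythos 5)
Your forward direction, your base case, and the reduction to gaining regularity for $v$ are all correct, and you correctly locate the crux: one must upgrade $v\in V_-^{k-1/2}$ to $v\in V_-^{k+1/2}$. The tangential difference-quotient scheme does not close, however, and the obstacle is structural rather than merely a matter of ``delicate commutator estimates.''

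The problem is that $\mathcal R\psi_h = h^{-1}\bigl((F_h^\partial)^*\mathcal R\Phi - \mathcal R\Phi\bigr)$ does not lie in a single boundary condition. Since $(F_h^\partial)^*\mathcal R\Phi\in (F_h^\partial)^*B$ while $\mathcal R\Phi\in B$, the difference quotient only lands in the larger space $(F_h^\partial)^*B + B$, which is not a boundary condition of the admissible type. Worse, the pullback $(F_h^\partial)^*B$ is itself not of the form required by \dref{dell}: the flow $F_h^\partial$ does not preserve the spectral decomposition of $A$, so the conjugated pieces are no longer spectral subspaces of $A$ and condition~2) of \dref{dell} fails. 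For $D$-elliptic boundary conditions that are genuinely nonlocal and not given by a pseudo-differential projection — e.g.\ the transmission condition of \eref{ex:TransCond} — one cannot expect the pulled-back condition to remain $D$-elliptic, with or without uniform bounds. The difference-quotient device is well suited to local boundary conditions in the Lopatinsky--Shapiro class, but it cannot treat the class of boundary conditions considered here.

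What you are missing is a half-derivative gain that you already possess for free but never use: the \emph{negative} spectral part of the trace has regularity $H^{k+1/2}$ directly from $D\Phi\in H^k_{\loc}$ and $\Phi\in\dom D_{\max}$, without any reference to $B$. This is the $H^k$-refinement of \tref{maxdom}.\ref{trace}: if $\Phi\in\dom D_{\max}$ and $D_{\max}\Phi\in H^k_{\loc}(M,F)$, then $\mathcal R\Phi\in H^{k+1/2}_{(-\infty,a)}(A)\oplus H^{k-1/2}_{[a,\infty)}(A)$. Given this, the theorem follows in one stroke, with no induction and no tangential regularity argument: take the decomposition of Remark~\ref{remell}.\ref{specialdecomposition} with $a\le 0$ and write $\mathcal R\Phi=w+v+gv$, $w\in W_+$, $v\in V_-^{1/2}$. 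Then $v=Q_{(-\infty,a)}\mathcal R\Phi\in V_-^{k+1/2}$ by the refined trace estimate; since $g$ has order zero, $gv\in V_+^{k+1/2}$; since $w$ is smooth and $Q_{[0,\infty)}$ is bounded on $H^{k+1/2}$, it follows that $Q_{[0,\infty)}\mathcal R\Phi\in H^{k+1/2}(\partial M,E)$; and then \tref{regbou} gives $\Phi\in H^{k+1}_{\loc}(M,E)$. The statement about smoothness up to the boundary follows by intersecting over all $k$.
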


\begin{thm}\label{smoothdense}
Assume the \setup{} and that $B\subset\check H(A)$ is a $D$-elliptic boundary condition.
Then
\begin{equation*}
  C^\infty_c(M,E;B)
  := \{ \Phi \in C^\infty_c(M,E) \mid \mathcal R(\Phi) \in B \}
\end{equation*}
is dense in $\dom D_{B,\max}$ with respect to the graph norm.
\end{thm}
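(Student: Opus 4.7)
The plan is to approximate $\Phi \in \dom D_{B,\max}$ by elements of $C^\infty_c(M,E;B)$ in three stages: first reduce to compact support, then smooth the boundary trace within $B$, and finally smooth the interior via density in $\dom D_{\min}$. The main obstacle is the middle step, which requires exploiting the specific structure of a $D$-elliptic boundary condition---the smoothness and finite-dimensionality of $W_\pm$, the graph representation $v\mapsto v+gv$, and the order-zero property of $g$---to preserve the condition $\mathcal{R}\Phi \in B$ while passing to smooth boundary data.

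For the reduction to compact support, using completeness of $M$ with compact $\partial M$, I would pick cutoffs $\chi_k \in C^\infty_c(M,\R)$ equal to $1$ on a neighborhood of $\partial M$ and on $\{d(\,\cdot\,,\partial M)\le k\}$, with $|d\chi_k|$ uniformly bounded. Then $\chi_k\Phi \to \Phi$ in $L^2$ by dominated convergence, and $D(\chi_k\Phi) - \chi_k D\Phi = \sigma_D(d\chi_k)\Phi \to 0$ in $L^2$ since $\supp(d\chi_k)$ escapes to infinity. Because $\chi_k \equiv 1$ near $\partial M$, $\mathcal{R}(\chi_k\Phi) = \mathcal{R}\Phi \in B$, so $\chi_k\Phi \in \dom D_{B,\max}$ has compact support and converges to $\Phi$ in graph norm; we may therefore assume $\Phi$ has compact support.

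To smooth the boundary value, I would invoke Remark~\ref{remell}.\ref{specialdecomposition} to choose the decomposition \eqref{elldec} with $V_-\oplus W_- = L^2_{(-\infty,a)}(A)$ and $V_+\oplus W_+ = L^2_{[a,\infty)}(A)$, and write $\mathcal{R}\Phi = w_+ + v + gv$ with $w_+\in W_+$ and $v\in V_-^{1/2}$. I would approximate $v$ in $H^{1/2}$ by $v_n \in V_-\cap C^\infty(\partial M,E)$ by first forming the spectral truncation $Q_{(-n,n)}v \in L^2_{(-\infty,a)}(A)\cap C^\infty$ and then subtracting its $L^2$-projection onto the smooth finite-dimensional subspace $W_-$. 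Since $g$ is of order zero, $gv_n$ is smooth and converges to $gv$ in $L^2$, hence in $H^{-1/2}$; therefore $\phi_n := w_+ + v_n + gv_n$ lies in $B\cap C^\infty(\partial M,E)$ and $\phi_n \to \mathcal{R}\Phi$ in $\check H(A)$.

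For the final stage, \tref{maxdom}.\ref{maxmin} combined with the open mapping theorem shows that the trace descends to a topological isomorphism $\dom D_{\max}/\dom D_{\min} \cong \check H(A)$, so there exist $\Xi_n \in \dom D_{\max}$ with $\mathcal{R}\Xi_n = \phi_n - \mathcal{R}\Phi$ and $\|\Xi_n\|_{D_{\max}} \to 0$. Setting $\tilde\Phi_n := \Phi + \Xi_n$ yields sections in $\dom D_{B,\max}$ with smooth boundary trace $\phi_n$, converging to $\Phi$ in graph norm. Using a collar neighborhood and a cutoff, pick $\Psi_n \in C^\infty_c(M,E)$ with $\mathcal{R}\Psi_n = \phi_n$; then $\tilde\Phi_n - \Psi_n \in \dom D_{\max}$ has vanishing trace, hence lies in $\dom D_{\min}$ by \tref{maxdom}.\ref{maxmin}, and by definition of $D_{\min}$ can be approximated in graph norm by $\Theta_{n,m}\in C^\infty_{cc}(M,E)$. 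Finally $\Psi_n + \Theta_{n,m} \in C^\infty_c(M,E;B)$ converges to $\tilde\Phi_n$ as $m\to\infty$, and a diagonal argument produces the required approximation of $\Phi$.
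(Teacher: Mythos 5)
Your argument is correct. The first half---showing that $B\cap C^\infty(\dM,E)$ is dense in $B$ by spectrally truncating $v$, subtracting the $L^2$-projection onto the finite-dimensional smooth space $W_-$, and using that $g$ is of order zero to get smoothness of $gv_n$ and $H^{-1/2}$-convergence of the upper spectral part---is exactly the argument the paper gives, only with the details written out. Where you genuinely diverge is in lifting the boundary approximation to a graph-norm approximation of $\Phi$: the paper writes $\Phi=\mathcal E\varphi+\Psi$ using the extension operator $\mathcal E$ of (43) in \cite{BB}, a bounded right inverse of $\mathcal R$ taking smooth boundary values to smooth sections, so that $\mathcal E\varphi_n\to\mathcal E\varphi$ in graph norm follows at once from Lemma~5.5 there, whereas you replace $\mathcal E$ by two softer ingredients: the open mapping theorem applied to the continuous surjection $\mathcal R:\dom D_{\max}\to\check H(A)$ with kernel $\dom D_{\min}$ (both from \tref{maxdom}), which yields correctors $\Xi_n$ of small graph norm with $\mathcal R\Xi_n=\phi_n-\mathcal R\Phi$, together with an explicit collar extension of the smooth traces $\phi_n$; the remaining difference has vanishing trace, lies in $\dom D_{\min}$, and is absorbed by the definition of $D_{\min}$ plus a diagonal argument. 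Your route is more self-contained relative to this survey, using only what is stated in \tref{maxdom} rather than the extension operator machinery of \cite{BB}, at the price of extra bookkeeping (norm-controlled preimages, the collar cutoff, the diagonal choice); also note that your initial reduction to compactly supported $\Phi$ is harmless but unnecessary, since your final approximants are compactly supported regardless.
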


\begin{proof}
Choose a representation of $B$ as in Remark \ref{remell}.\ref{specialdecomposition}.
Since $W_-$ is finite dimensional and contained in $C^\infty(\dM,E)$,
we get that $V_-\cap C^\infty(\dM,E)$ is dense in $V_-$,
and similarly for $V_+$.
Since $g$ is of order $0$, we conclude that
\[
  \{ v+gv \mid v \in V_-^{1/2} \}\cap C^\infty(\dM,E)
\]
is dense in $\{ v+gv \mid v \in V_-^{1/2} \}$.
Hence $B\cap C^\infty(\dM,E)$ is dense in $B$.

Let $\Phi\in\dom D_{B,\max}$ and set $\varphi:=\mathcal R\Phi$.
Choose an extension operator $\mathcal E$ as in (43) in \cite{BB}.
Then $\Psi:=\Phi-\mathcal E\varphi$ vanishes along $\dM$,
and hence $\Psi\in\dom D_{\min}$, by \tref{maxdom}.\ref{maxmin}.
Therefore $\Psi$ is the limit of smooth sections in $C^\infty_{cc}(M,E)$,
by the definition of $D_{\min}$.

It remains to show that $\mathcal E\varphi$ can be approximated by
smooth sections in $C^\infty(M,E;B)$.
As explained in the beginning of the proof,
there is a sequence $(\varphi_n)$ in $B\cap C^\infty(\dM,E)$ converging to $\varphi$.
Then $\mathcal E\varphi_n\in C^\infty(M,E;B)$
and $\mathcal E\varphi_n\to\mathcal E\varphi$ with respect to the graph norm,
by Lemma 5.5 in \cite{BB}.
\end{proof}

\subsection{Self-adjoint \emph{D}-elliptic boundary conditions}

Assume the \setup, that $E=F$ and that $D$ is formally self-adjoint.
Choose $\tilde A$ as in \eqref{atilde}.
Let $B\subset H^{1/2}(\partial M,E)$ be a $D$-elliptic boundary condition.
Then $D_{\Bad,\max}$ is the adjoint operator of $D_{B,\max}$,
where $\Bad$ is given by \eqref{bad}.
In particular, $D_{B,\max}$ is self-adjoint if and only if $B$ is self-adjoint,
that is, if and only if $B=\Bad$.

Note that $\Bad$ is the image of the $L^2$-orthogonal complement
of $B$ in $H^{1/2}(\partial M,E)$ under $\sigma_D(\nu^\flat)$.
Hence $B=\Bad$ if and only if $\sigma_D(\nu^\flat)$ interchanges $B$
with its $L^2$-orthogonal complement in $H^{1/2}(\partial M,E)$.

\begin{thm}
Assume the \setup, that $E=F$ and that $D$ is formally self-adjoint.
Let $B$ be a self-adjoint $D$-elliptic boundary condition.

Then $D$ is essentially self-adjoint on
\begin{equation*}
  C^\infty_{c}(M,E;B) = \{ \Phi \in C^\infty_c(M,E) \mid \mathcal R\Phi \in B \} ,
\end{equation*}
and the closure of $D$ on $C^\infty_{c}(M,E;B)$ is $D_{B,\max}$.
\end{thm}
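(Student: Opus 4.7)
The plan is to chain together two results that are already in place: \tref{smoothdense}, which makes $C^\infty_c(M,E;B)$ a core for $D_{B,\max}$, and the selfadjointness criterion extracted from \tref{prodad} in the paragraph immediately preceding the statement. No new analytic estimate is needed; the whole statement is essentially a bookkeeping consequence of what has been set up.

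First I would invoke the preceding discussion: since $B = \Bad$ by hypothesis and $D^* = D$, \tref{prodad} yields $(D_{B,\max})^* = D^*_{\Bad,\max} = D_{B,\max}$, so $D_{B,\max}$ is selfadjoint on $L^2(M,E)$. Next I would apply \tref{smoothdense} to conclude that $C^\infty_c(M,E;B)$ is dense in $\dom D_{B,\max}$ with respect to the graph norm. Since $D_{B,\max}$ is closed and agrees with the differential operator $D$ on the subspace $C^\infty_c(M,E;B) \subset \dom D_{B,\max}$, it follows from the definition of graph closure that $D_{B,\max}$ is the closure of the symmetric operator $D|_{C^\infty_c(M,E;B)}$.

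Combining the two observations, $D|_{C^\infty_c(M,E;B)}$ is a symmetric operator whose closure $D_{B,\max}$ is selfadjoint; this is precisely the definition of essential selfadjointness, and it gives both assertions of the theorem simultaneously.

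The one point that deserves a quick sanity check — really the only place where something could go wrong — is making sure that $D|_{C^\infty_c(M,E;B)}$ is genuinely symmetric, i.e., that the boundary term in Green's formula (\pref{greenfor}, or equivalently \tref{maxdom}.\ref{parintmd}) vanishes for $\Phi, \Psi \in C^\infty_c(M,E;B)$. But this boundary term is $\int_{\partial M}\langle \sigma_D(\nu^\flat)\mathcal R\Phi, \mathcal R\Psi\rangle\,\dS$, and the selfadjointness of $B$ — which is exactly the statement that $\sigma_D(\nu^\flat)$ interchanges $B$ with its $L^2$-orthogonal complement in $H^{1/2}(\partial M, E)$ — forces this pairing to be zero whenever both $\mathcal R\Phi$ and $\mathcal R\Psi$ lie in $B$. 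So the symmetry is automatic, and nothing beyond the cited theorems is required.
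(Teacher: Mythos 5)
Your argument is correct and is essentially the paper's own proof: the paper's one-line proof cites \tref{smoothdense} for the core property and relies on the immediately preceding discussion (via \tref{prodad}) for the selfadjointness of $D_{B,\max}$, exactly as you do. Your closing symmetry check is a harmless extra verification, since symmetry of $D$ on $C^\infty_c(M,E;B)$ already follows from its being a restriction of the selfadjoint operator $D_{B,\max}$.
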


\begin{proof}
By \tref{smoothdense}, $C^\infty_{c}(M,E;B)$ is dense in $\dom D_{B,\max}$.
\end{proof}

The following result adapts and extends Theorem 1.83 in \cite{BBC}
to $D$-elliptic boundary conditions as considered here.

\usetagform{simple}

\begin{thm}[Normal form for $B$]\label{normb}
Assume the \setup, that $E=F$ and that $D$ is formally self-adjoint. 
Suppose that $\sigma_D(\nu^\flat)$ anticommutes with $A$.
Then a $D$-elliptic boundary condition $B$ is self-adjoint if and only if there is

\begin{inparaenum}[1)]
\item\label{normbvw}
an orthogonal decomposition 
$L^2_{(-\infty,0)}(A)=V\oplus W,$
where $W$ is a finite dimensional subspace of $C^\infty(\partial M,E)$,\\
\item\label{normbl} 
an orthogonal decomposition $\ker A=L\oplus\sigma_D(\nu^\flat)L$,\\
\item\label{normbg}
and a self-adjoint operator $g:V\oplus L\to V\oplus L$ of order zero
\end{inparaenum}
such that
\begin{equation*}
  B = \sigma_D(\nu^\flat)W \oplus \{ v + \sigma_D(\nu^\flat)gv \mid v \in V^{1/2}\oplus L \} .
\tag{$\blacksquare$}
\end{equation*}
\end{thm}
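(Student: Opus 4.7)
I would prove the two implications separately, the ``if'' direction by direct construction and the ``only if'' direction by unpacking the selfadjointness condition $B = \Bad$ via \tref{prodad} and formula \eqref{bad}. Throughout, write $\sigma := \sigma_D(\nu^\flat)$. The Clifford relation applied to $\nu^\flat$ together with formal selfadjointness of $D$ give $\sigma^2 = -\id$ and $\sigma^* = -\sigma$, so $\sigma$ is unitary; combined with $\sigma A = -A\sigma$, this says that $\sigma$ maps $L^2_{(-\infty,0)}(A)$ isomorphically onto $L^2_{(0,\infty)}(A)$ and preserves $\ker A$.

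For the ``if'' direction, I would assemble $(V, W, L, g)$ into a $D$-elliptic decomposition by setting
\begin{equation*}
V_- := V \oplus L, \quad W_- := W, \quad V_+ := \sigma V \oplus \sigma L, \quad W_+ := \sigma W,
\end{equation*}
and $\tilde g : V_- \to V_+$ by $\tilde g(v) := \sigma g(v)$. Orthogonality of the four pieces of $L^2(\dM,E)$ follows from the spectral orthogonalities together with $\ker A = L \oplus \sigma L$; all items of \dref{dell} then hold with $a = 0$, using that $\sigma$ preserves smoothness and each Sobolev norm. To verify selfadjointness, I would apply \eqref{bad} to this decomposition: from $g^* = g$ and $\sigma^* = -\sigma$ one computes $\tilde g^* = -g\sigma$, and substituting $u = \sigma w$ with $w \in V^{1/2} \oplus L$ yields $\sigma(u - \tilde g^* u) = -(w + \sigma g w)$, so $\Bad$ reproduces the given formula for $B$.

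For the ``only if'' direction, I would start with a selfadjoint $D$-elliptic $B$ and apply Remark~\ref{remell}.\ref{specialdecomposition} with $a$ a small positive number below the smallest positive eigenvalue of $A$. This yields a preliminary decomposition with $V_-^0 \oplus W_-^0 = L^2_{(-\infty,0)}(A) \oplus \ker A$ and $V_+^0 \oplus W_+^0 = L^2_{(0,\infty)}(A)$, so that $B = W_+^0 \oplus \{v + g_0 v : v \in (V_-^0)^{1/2}\}$. Formula \eqref{bad} then gives $\Bad = \sigma\bigl(W_-^0 \oplus \{u - g_0^* u : u \in (V_+^0)^{1/2}\}\bigr)$. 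Imposing $B = \Bad$ and projecting onto the three spectral pieces $L^2_{(-\infty,0)}(A)$, $\ker A$, and $L^2_{(0,\infty)}(A)$ would yield: (i) $\sigma W_-^0 = W_+^0$, so one sets $W := W_-^0$ (after cleaning out any kernel contribution, permissible by the freedom in the decomposition); (ii) a canonical identification $\sigma V_+^0 = V_-^0 \cap L^2_{(-\infty,0)}(A) =: V$; and (iii) the remaining piece $V_-^0 \cap \ker A$ of $V_-^0$ inside $\ker A$, from which $L$ is to be extracted.

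The heart of the argument is the Lagrangian property $\ker A = L \oplus \sigma L$. The identity $B = \Bad$ is equivalent to isotropy of $B$ for the sesquilinear form $(\phi, \psi) \mapsto (\sigma\phi, \psi)_{L^2(\dM)}$; restricting to $\phi, \psi \in L$ gives $\omega|_{L \times L} = 0$. Non-degeneracy of $\omega$ on $\ker A$ (Remark~\ref{remnorma}) then forces $L \cap \sigma L = 0$, and the maximality of $L$ as an isotropic subspace (a consequence of $B$ being its own $\sigma$-complement, so that no further kernel vectors can be added to $L$ while preserving isotropy) yields $\dim L = \tfrac{1}{2} \dim \ker A$, whence $L \oplus \sigma L = \ker A$. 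The selfadjoint order-zero operator $g : V \oplus L \to V \oplus L$ is then recovered from $g_0$ via $g_0 = \sigma g$, and its selfadjointness is read off from the symmetry of the graph representations of $B$ and $\Bad$. The main obstacle is precisely this redistribution of $\ker A$: the preliminary decomposition places all of $\ker A$ on the negative side, whereas the normal form distributes it symmetrically across the Lagrangian $L \oplus \sigma L$. Carrying out this shift while retaining the graph structure and turning the graph map into a selfadjoint $g$ of order zero is the technical core of the proof.
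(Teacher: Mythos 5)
The statement you are asked to prove is marked with a $\blacksquare$ in the paper, i.e., it is cited (from \cite{BBC}) rather than proved there, so there is no proof of record to compare against; I assess your argument on its own terms.

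Your ``if'' direction is correct and cleanly done: the decomposition $V_-=V\oplus L$, $W_-=W$, $V_+=\sigma V\oplus\sigma L$, $W_+=\sigma W$, $\tilde g=\sigma g$ satisfies all the items of \dref{dell} with $a=0$, and the computation $\tilde g^*=-g\sigma$ together with \eqref{bad} indeed reproduces $B$, using $g^*=g$ at exactly the right place.

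The ``only if'' direction, however, has a genuine gap, and the partial claims you make along the way are in fact false. Write $\sigma=\sigma_D(\nu^\flat)$. Claims (i) and (ii), namely $\sigma W_-^0=W_+^0$ and $\sigma V_+^0=V_-^0\cap L^2_{(-\infty,0)}(A)$, do not hold for the decomposition obtained from Remark~\ref{remell}.\ref{specialdecomposition}; that decomposition has no reason to be compatible with $\sigma$. More seriously, the Lagrangian argument does not go through as you state it: you extract $L$ from $V_-^0\cap\ker A$ and then invoke ``restricting to $\phi,\psi\in L$'' the isotropy of $B$, but in general $V_-^0\cap\ker A\not\subset B$, so isotropy of $B$ gives you nothing about that space. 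If instead one sets $L:=B\cap\ker A$ (which is contained in $B$ and hence isotropic), maximality fails: $B\cap\ker A$ can be a \emph{strict} isotropic subspace, even trivial. Concretely, suppose $\dim\ker A=2$ with $\ker A=\C\psi\oplus\C\sigma\psi$, let $\phi$ be a unit $(-\lambda)$-eigensection of $A$ with $\lambda>0$ (so $\sigma\phi$ is a $\lambda$-eigensection), and consider
\begin{equation*}
B=\C(\phi+\psi)\oplus\C(\sigma\psi-\sigma\phi)\oplus H^{1/2}_{(-\infty,-\lambda)}(A).
\end{equation*}
One checks directly that $\sigma B^\perp=B$, so $B$ is a selfadjoint $D$-elliptic boundary condition, yet $B\cap\ker A=\{0\}$; the $L$ realising the normal form for this $B$ (namely $\C\sigma\psi$) is not contained in $B$ at all. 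So the intermediate identifications you propose must be replaced by an actual construction of a new, $\sigma$-compatible decomposition. You recognise yourself that ``carrying out this shift while retaining the graph structure and turning the graph map into a selfadjoint $g$ of order zero is the technical core of the proof'' --- but that core is exactly what is missing, and the sketch preceding it would not survive the example above.
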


\usetagform{default}

\begin{remarks}\label{remnormb}
\begin{inparaenum}[1)]
\item
In \tref{normb}, the case $\ker A=\{0\}$ is not excluded.
In this latter case, the representation of $B$ as in \tref{normb} is unique
since $V=Q_{(-\infty,0)}B$ and $W$ is the orthogonal complement of $V$
in $L^2_{(-\infty,0)}(A)$. \\
\item\label{sebcexist}
\tref{normb}.\ref{normbl} excludes the existence of self-adjoint boundary conditions
in the case where $\ker A$ is of odd dimension.
Conversely, if $\dim\ker A$ is even and the eigenvalues $i$ and $-i$ of $\sigma_D(\nu^\flat)$ have equal multiplicity, 
then self-adjoint boundary conditions exist.
A simple example is $H^{1/2}_{(-\infty,0)}(A)\oplus L$,
where $L$ is a subspace of $\ker A$ as in \tref{normb}.\ref{normbl}. \\
\item
Let $E$, $D$, and $A$ be the complexification of a Riemannian vector bundle,
a formally self-adjoint real Dirac-type operator,
and a real boundary operator $A_\R$, respectively.
Then $\sigma_D(\nu^\flat)$ turns the real kernel $\ker(A_\R)$ into a symplectic vector space.
It follows that the complexification $L$ of any Lagrangian subspace of $\ker(A_\R)$
will satisfy $\ker A=L\oplus\sigma_D(\nu^\flat)L$,
and hence self-adjoint elliptic boundary conditions exist, by the previous remark.\\
\item
First attempts have been made to relax the condition of compactness of $\dM$.
The results in \cite{GN} apply to the Dirac operator associated with a spin$^c$ structure when $M$ and $\dM$ are complete and geometrically bounded in a suitable sense.
\end{inparaenum}
\end{remarks}

\subsection{Local and pseudo-local boundary conditions}

Throughout this section,
we let $M$ be a complete Riemannian manifold with compact boundary, 
$E$ and $F$ be Hermitian vector bundles over $M$,
and $D$ be a Dirac-type operator from $E$ to $F$.

\begin{definition}
We say that a linear subspace $B \subset H^{1/2}(\dM,E)$ is a \emph{local boundary condition}
if there is a (smooth) subbundle $E'\subset E_{|\dM}$ such that
\begin{equation*}
  B = H^{1/2}(\dM,E').
\end{equation*}
More generally, we say that $B$ is \emph{pseudo-local} if there is
a classical pseudo-differential operator $P$ of order $0$ acting on sections of $E$
over $\dM$ which induces an orthogonal projection on $L^2(\dM,E)$ such that
\begin{equation*}
  B=P(H^{1/2}(\dM,E)) .
\end{equation*}
\end{definition}

\begin{thm}[{Characterization of pseudo-local boundary conditions, \cite[Thm.~7.20]{BB}}]\label{pseudo}
Assume the \setup.
Let $P$ be a classical pseudo-differential operator of order zero,
acting on sections of $E$ over $\partial M$.
Suppose that $P$ induces an orthogonal projection in $L^2(\dM,E)$.
Then the following are equivalent:
\begin{asparaenum}[(i)]
\item\label{lbcell}
$B=P(H^{1/2}(\dM,E))$ is a $D$-elliptic boundary condition.
\item\label{lbcfop} 
For some (and then all) $a\in\R$,
$$P-Q_{[a,\infty)}:L^2(\dM,E)\to L^2(\dM,E)$$
is a Fredholm operator. 
\item\label{lbcpdo} 
For some (and then all) $a\in\R$,
$$P-Q_{[a,\infty)}:L^2(\dM,E)\to L^2(\dM,E)$$
is an elliptic classical pseudo-differential operator of order zero.
\item\label{lbcloc}
For all $\xi\in T^*_x\dM\setminus\{0\}$, $x\in\dM$, 
the principal symbol $\sigma_P(\xi):E_x\to E_x$ restricts to an isomorphism
from the  sum of the eigenspaces for the negative eigenvalues of $i\sigma_{A}(\xi)$
onto its image $\sigma_P(\xi)(E_x)$.
\hfill$\blacksquare$
\end{asparaenum}
\end{thm}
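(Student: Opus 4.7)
My plan is to establish the three equivalences
\ref{lbcpdo}$\iff$\ref{lbcloc}, \ref{lbcfop}$\iff$\ref{lbcpdo}, and \ref{lbcell}$\iff$\ref{lbcpdo}.
The first two reduce to pseudo-differential calculus on the closed manifold $\dM$, while the direction \ref{lbcpdo}$\implies$\ref{lbcell} contains the main analytic content. By Seeley's theorem, $Q_{[a,\infty)}$ is a classical pseudo-differential operator of order zero, so $P-Q_{[a,\infty)}$ is one as well, and changing $a$ alters $Q_{[a,\infty)}$ only by a finite-rank smoothing operator, which handles the independence of $a$. Since $A$ is formally selfadjoint, $\sigma_A(\xi)$ is skew-hermitian, $i\sigma_A(\xi)$ is hermitian, and the principal symbol of $Q_{[a,\infty)}$ is the orthogonal projection $\Pi_+(\xi)$ onto the sum of positive eigenspaces of $i\sigma_A(\xi)$, whose kernel is the sum of negative eigenspaces appearing in \ref{lbcloc}. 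The equivalence \ref{lbcfop}$\iff$\ref{lbcpdo} is then the standard theorem that a classical order-zero pseudo-differential operator on a closed manifold is Fredholm on $L^2$ iff it is elliptic. For \ref{lbcpdo}$\iff$\ref{lbcloc}, both $\sigma_P(\xi)$ and $\Pi_+(\xi)$ are orthogonal projections, and an elementary linear-algebra argument gives that a difference $\Pi_1-\Pi_2$ of two orthogonal projections on a finite-dimensional Hilbert space is invertible iff $\Pi_1$ restricts to an isomorphism from $\ker\Pi_2$ onto $\mathrm{range}(\Pi_1)$, which is precisely \ref{lbcloc}.

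For the main direction \ref{lbcpdo}$\implies$\ref{lbcell}, write $B_{L^2}$ for the $L^2$-closure of $B$, and set
$W_+ := B_{L^2} \cap L^2_{[a,\infty)}(A)$, $W_- := B_{L^2}^\perp \cap L^2_{(-\infty,a)}(A)$.
A direct calculation with orthogonal projections gives
$\ker(P-Q_{[a,\infty)}) = W_+ \oplus W_-$,
so by ellipticity both $W_\pm$ are finite-dimensional and smooth. Letting $V_-$ and $V_+$ be the orthogonal complements of $W_-$ in $L^2_{(-\infty,a)}(A)$ and of $W_+$ in $L^2_{[a,\infty)}(A)$ supplies the orthogonal decomposition $L^2(\dM,E) = V_- \oplus W_- \oplus V_+ \oplus W_+$ required by \dref{dell}. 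Self-adjointness plus the Fredholm property make $T := (P-Q_{[a,\infty)})|_{V_- \oplus V_+}$ an isomorphism; for $v \in V_-$ I would define $gv \in V_+$ as the $V_+$-component of $T^{-1}(v,0)$ and check that $P(v + gv) = v + gv$, so $v + gv \in B_{L^2}$. Since $B_{L^2} \cap V_+ = 0$ provides uniqueness, this yields $B_{L^2} = W_+ \oplus \{v+gv : v \in V_-\}$.

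\textbf{The main obstacle} is verifying that $g$ and $g^*$ are of order zero as in \dref{dell}.\ref{ebcg}, which requires preservation of all Sobolev spaces and not merely $L^2$-boundedness. For this I would introduce the modified operator $\tilde T := (P-Q_{[a,\infty)}) + \Pi_{W_+\oplus W_-}$: it is an elliptic pseudo-differential operator of order zero (the added term being finite-rank smoothing) and is invertible on $L^2$, so standard parametrix theory shows $\tilde T^{-1}$ is itself a classical pseudo-differential operator of order zero. Since the Fredholm generalized inverse satisfies $T^\dagger = \tilde T^{-1} - \Pi_{W_+\oplus W_-}$, it is also a pseudo-differential operator of order zero, whence $gv = Q_{V_+}T^\dagger(v,0)$ preserves Sobolev regularity; the same holds for $g^*$ by duality. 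The identification $B = PH^{1/2}(\dM,E) = W_+ \oplus \{v+gv : v \in V_-^{1/2}\}$ follows because order-zero pseudo-differential operators preserve $H^{1/2}$.

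For the reverse direction \ref{lbcell}$\implies$\ref{lbcpdo}, I would first use Remark~\ref{remell}.\ref{specialdecomposition} to arrange $V_- \oplus W_- = L^2_{(-\infty,a)}(A)$ and $V_+ \oplus W_+ = L^2_{[a,\infty)}(A)$. A block-matrix computation on this decomposition then shows that $P - Q_{[a,\infty)}$ has kernel $W_+ \oplus W_-$ (finite-dimensional) and that its restriction to $V_- \oplus V_+$ admits a bounded inverse expressible in terms of the operators $(1+g^*g)^{-1}$ and $(1+gg^*)^{-1}$. Hence $P-Q_{[a,\infty)}$ is Fredholm, which is \ref{lbcfop} and therefore \ref{lbcpdo} by the first paragraph.
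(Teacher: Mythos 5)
This theorem is one of the results the paper only cites from \cite{BB} (it carries the $\blacksquare$ mark), so there is no in-paper proof to compare with; judged on its own terms, your plan is essentially correct and close in spirit to the known argument, which likewise rests on Seeley's theorem that $Q_{[a,\infty)}$ is a classical order-zero $\Psi$DO with principal symbol the positive spectral projection of $i\sigma_A(\xi)$, and on reading off the graph data $(W_\pm,V_\pm,g)$ from $P-Q_{[a,\infty)}$. Your key steps check out: for orthogonal projections one has $\ker(P-Q)=(\im P\cap\im Q)\oplus(\ker P\cap\ker Q)$, which gives both the finite-dimensional smooth spaces $W_\pm$ in the direction \ref{lbcpdo}$\Rightarrow$\ref{lbcell} and the symbol-level lemma behind \ref{lbcpdo}$\Leftrightarrow$\ref{lbcloc}; the identity $Px=v+Qx$ for $x=T^{-1}v$ does show $v+Q_{V_+}x\in\im P$; the operator $\tilde T=(P-Q_{[a,\infty)})+\Pi_{W_+\oplus W_-}$ is elliptic and invertible, and $T^\dagger=\tilde T^{-1}-\Pi_{W_+\oplus W_-}$ preserves all $H^s$, so $g$ and $g^*$ are of order zero; and in the converse direction the restriction of $P-Q_{[a,\infty)}$ to $V_-\oplus V_+$ is invertible with the explicit bounded inverse $(a,b)\mapsto(a+g^*b,\,ga-b)$, giving the Fredholm property. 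Three places are stated more tersely than they deserve, though all are fillable: (a) the implication Fredholm $\Rightarrow$ elliptic for classical order-zero $\Psi$DOs needs the wave-packet (or $C^*$-closure of the symbol map) argument, not just the parametrix direction; (b) surjectivity of the parametrization $\im P=W_+\oplus\{v+gv\}$ needs a short orthogonality argument (given $\phi\in\im P\cap W_+^\perp$, set $v:=Q_{(-\infty,a)}\phi$, check $v\in V_-$, and use $\im P\cap L^2_{[a,\infty)}=W_+$ to conclude $\phi=v+gv$), which is a bit more than the remark ``$B_{L^2}\cap V_+=0$ provides uniqueness''; (c) the inclusion $P(H^{1/2})\subset W_+\oplus\{v+gv\mid v\in V_-^{1/2}\}$ uses that the $V_-$-component is obtained by applying $Q_{(-\infty,a)}$ minus a smoothing projection, hence stays in $H^{1/2}$. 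With these details written out, your outline constitutes a complete and self-contained proof of the equivalences.
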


\begin{remark}
The projection $P$ is closely related to the Calder\'on projector $\P$ studied in the literature,
see e.g.\,\cite{BW}.
If the Calder\'on projector is chosen self-adjoint as described in \cite[Lemma~12.8]{BW},
then $P=\id-\P$ satisfies the conditions in \tref{pseudo}.
\end{remark}

Our concept of $D$-elliptic boundary conditions covers in particular
that of classical elliptic boundary conditions in the sense of Lopatinski and Shapiro.

\begin{cor}[{\cite[Cor.~7.22]{BB}}]\label{lopashap}
Let $E' \subset E_{|\dM}$ be a subbundle
and $P:E_{|\dM}\to E'$ be the fiberwise orthogonal projection.
If $(D,\id-P)$ is an elliptic boundary value problem in the classical sense
of Lopatinski and Shapiro, then $B=H^{1/2}(\dM,E')$
is a local $D$-elliptic boundary condition.
\end{cor}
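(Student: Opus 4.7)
The plan is to invoke \tref{pseudo}, applied to the zeroth-order pseudo-differential operator $P$. Since $E' \subset E|_{\dM}$ is a smooth subbundle, the fiberwise orthogonal projection $P$ onto $E'$ is a bundle endomorphism, hence trivially a classical pseudo-differential operator of order zero on $\dM$, and it visibly induces an orthogonal projection on $L^2(\dM,E)$. Moreover $B = H^{1/2}(\dM, E') = P(H^{1/2}(\dM,E))$, so the hypotheses of \tref{pseudo} are met and the equivalence \ref{lbcell} $\Leftrightarrow$ \ref{lbcloc} is available.

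The principal symbol of $P$ is independent of $\xi$ and equal to $P_x$. Thus condition \ref{lbcloc} of \tref{pseudo} reads: for each $x \in \dM$ and $\xi \in T^*_x\dM \setminus \{0\}$, the pointwise projection $P_x$ restricts to an isomorphism from $E_x^-(\xi)$, the sum of eigenspaces of $i\sigma_A(\xi)$ for negative eigenvalues, onto $P_x(E_x) = E'_x$.

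On the other hand, the classical Lopatinsky--Shapiro condition for $(D, \id-P)$, as formulated in \cite[Sec.~1.9]{Gi}, demands that at every $(x,\xi)$ with $\xi \ne 0$, the model ODE
\begin{equation*}
(\partial_t + \sigma_A(\xi)) f = 0 \text{ on } [0,\infty), \qquad (\id - P_x) f(0) = g,
\end{equation*}
be uniquely solvable in the appropriate stable subspace for every $g \in (\id - P_x)E_x$. For a Dirac-type operator, $\sigma_A(\xi)$ is skew-Hermitian by \eqref{eq:symbDT}, so its spectrum is purely imaginary; the stable subspace --- obtained via analytic continuation of the half-line into the lower complex half-plane, or equivalently as the negative spectral subspace of $i\sigma_A(\xi)$ --- is precisely $E_x^-(\xi)$. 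Unique solvability of the model problem is then equivalent to $P_x$ mapping $E_x^-(\xi)$ bijectively onto $E'_x$.

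Combining these observations, the classical Lopatinsky--Shapiro hypothesis coincides with \tref{pseudo}.\ref{lbcloc}, and the theorem yields that $B$ is $D$-elliptic; it is manifestly local because $P$ is a bundle endomorphism and $B = H^{1/2}(\dM,E')$. The main delicate step is the identification of the classical ``stable subspace'' with the negative spectral subspace of $i\sigma_A(\xi)$; this relies crucially on the skew-Hermiticity of $\sigma_A(\xi)$ for Dirac-type operators, without which the pointwise ODE has no genuinely decaying solutions and the translation between the two ellipticity formulations would break down.
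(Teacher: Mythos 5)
The paper gives no proof of this corollary (it is quoted, hence the $\blacksquare$, from \cite[Cor.~7.22]{BB}), and your overall strategy --- feed the fiberwise projection $P$ into \tref{pseudo} and identify the classical Lopatinsky--Shapiro condition for $(D,\id-P)$ with \tref{pseudo}~\eqref{lbcloc} --- is certainly the intended route. The routine verifications are fine: $P$ is a classical pseudo-differential operator of order zero inducing an orthogonal projection on $L^2(\dM,E)$, $P(H^{1/2}(\dM,E))=H^{1/2}(\dM,E')=B$, $\sigma_P(\xi)=P_x$, and $B$ is local by definition.

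The step that carries the actual content, however, is both garbled and, as written, false. First, the half-line model equation is not $(\partial_t+\sigma_A(\xi))f=0$: with the paper's symbol convention the conormal variable must be replaced by $\pm i\partial_t$, so the equation reads $(\partial_t\pm i\sigma_A(\xi))f=0$. With your version, skew-hermiticity of $\sigma_A(\xi)$ makes $e^{-t\sigma_A(\xi)}$ unitary, so there are \emph{no} decaying solutions at all and ``the stable subspace'' is not defined by decay; your closing sentence has this exactly backwards --- it is the hermiticity of $i\sigma_A(\xi)$ in the corrected equation that produces a genuine splitting into exponentially decaying and growing solutions, the decaying ones forming one of the two spectral subspaces $E^{\pm}_x(\xi)$ of $i\sigma_A(\xi)$ (which one depends on the sign convention; this ambiguity is harmless because $E^{\pm}_x(-\xi)=E^{\mp}_x(\xi)$ and all conditions are quantified over all $\xi\neq0$). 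Second, and more seriously, your claim that unique solvability of $(\id-P_x)f(0)=g$ on the stable subspace ``is equivalent to $P_x$ mapping $E^-_x(\xi)$ bijectively onto $E'_x$'' is not a correct pointwise statement when the same subspace appears on both sides: if, say, $E'_x=E^-_x(\xi)$, then $P_x|_{E^-_x(\xi)}$ is the identity while $(\id-P_x)$ annihilates $E^-_x(\xi)$. What the proof must actually supply is the complement swap: for a subspace $V\subset E_x$, the map $(\id-P_x)|_V:V\to(E'_x)^{\perp}$ is bijective iff $E_x=V\oplus E'_x$, iff $E_x=V^{\perp}\oplus(E'_x)^{\perp}$, iff $P_x|_{V^{\perp}}:V^{\perp}\to E'_x$ is bijective. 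Applied with $V$ the stable subspace (whose orthogonal complement is the opposite spectral subspace of $i\sigma_A(\xi)$), and using $E^{\pm}_x(-\xi)=E^{\mp}_x(\xi)$ if needed, this yields precisely \tref{pseudo}~\eqref{lbcloc}. This elementary but essential linear-algebra step is exactly what your one-line identification skips; once it is inserted and the model ODE corrected, your argument goes through.
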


\begin{proof}
Let $(D,\id-P)$ be an elliptic boundary value problem in the classical sense
of Lopatinsky and Shapiro, see e.~g.\ \cite[Sec.~1.9]{Gi}.
This means that the rank of $E'$ is half of that of $E$ and that, for any $x\in\dM$,
any $\eta\in T^*_x\dM \setminus \{0\}$, and any $\phi\in (E'_x)^\perp$,
there is a unique solution $f:[0,\infty)\to E_x$ to the ordinary differential equation
\begin{equation}
\left(i\sigma_{A}(\eta) +\frac{d}{dt}\right)f(t)=0
\label{eq:LopShap}
\end{equation}
subject to the boundary conditions
$$
(\id-P)f(0)=\phi
\quad\mbox{ and }\quad
\lim_{t\to\infty}f(t)=0 .
$$
Recall from Subsection~\ref{subsec:adapted} that $i\sigma_{A}(\eta)$ is Hermitian,
hence diagonalizable with real eigenvalues.
The solution to \eqref{eq:LopShap} is given by $f(t)= \exp(-it\sigma_{A}(\eta))\phi$.
The condition $\lim_{t\to\infty}f(t)=0$ is therefore equivalent to $\phi$
lying in the sum of the eigenspaces to the positive eigenvalues of $i\sigma_{A}(\eta)$.
This shows criterion \eqref{lbcloc} of \tref{pseudo}.
\end{proof}

As a direct consequence of \tref{pseudo} \eqref{lbcloc} we obtain

\begin{cor}\label{cor:involution}
Let $E_{|\dM}=E'\oplus E''$ be a decomposition such that $\sigma_{A}(\xi)=\sigma_D(\nu^\flat)^{-1}\sigma_D(\xi)$ interchanges $E'$ and $E''$, for all $\xi\in T^*\dM$.
Then $B':=H^{1/2}(\dM,E')$ and $B'':=H^{1/2}(\dM,E'')$ are local $D$-elliptic boundary conditions.
\qed
\end{cor}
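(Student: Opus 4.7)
The plan is to verify condition~\eqref{lbcloc} of \tref{pseudo} for the pseudo-local projection $P$ given by the fiberwise orthogonal projection $E_{|\dM}\to E'$ (and analogously for $E''$). First I would observe that as a bundle endomorphism, $P$ is a classical pseudo-differential operator of order zero which induces an orthogonal projection on $L^2(\dM,E)$ with image $B'=H^{1/2}(\dM,E')$, and whose principal symbol equals $P$ itself, independent of $\xi$. Thus $B'$ is pseudo-local in the sense of the definition, and the task reduces to checking that for every $x\in\dM$ and every $\xi\in T^*_x\dM\setminus\{0\}$, the restriction $P|_{E_x^-}$ is an isomorphism onto $E'_x$, where $E_x^\pm$ denotes the sum of the eigenspaces of $i\sigma_A(\xi)$ for positive/negative eigenvalues.

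The key observation is an anticommutation. I would introduce the bundle involution $J\in\End(E_{|\dM})$ equal to $+\id$ on $E'$ and $-\id$ on $E''$. The hypothesis that $\sigma_A(\xi)$ interchanges $E'$ and $E''$ is then equivalent to $J\sigma_A(\xi)=-\sigma_A(\xi)J$. Since $A$ is formally selfadjoint of first order, $\sigma_A(\xi)$ is skew-hermitian, so $i\sigma_A(\xi)$ is hermitian with real spectrum; the anticommutation with $J$ shows that $J$ maps the $\lambda$-eigenspace of $i\sigma_A(\xi)$ isomorphically onto the $(-\lambda)$-eigenspace, and in particular swaps $E_x^+$ and $E_x^-$. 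For $\xi\neq 0$, ellipticity of $D$ together with \eqref{symbdt} forces $\sigma_A(\xi)$ to be invertible, hence $0$ is not an eigenvalue and $\dim E_x^+=\dim E_x^-=\tfrac12\dim E_x$. Combining invertibility with $\sigma_A(\xi)E'_x\subseteq E''_x$ and $\sigma_A(\xi)E''_x\subseteq E'_x$ also yields $\dim E'_x=\dim E''_x=\tfrac12\dim E_x$.

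Finally, I would verify injectivity of $P|_{E_x^-}$: if $v\in E_x^-$ satisfies $Pv=0$, then $v\in E''_x$ and hence $Jv=-v\in E_x^-$; but $J$ sends $E_x^-$ into $E_x^+$, so $-v\in E_x^-\cap E_x^+=\{0\}$, forcing $v=0$. Combined with the dimension equality $\dim E_x^-=\dim E'_x$, this shows that $P|_{E_x^-}$ is an isomorphism onto $E'_x$. Thus condition~\eqref{lbcloc} of \tref{pseudo} holds and $B'$ is a $D$-elliptic boundary condition; the identical argument with the roles of $E'$ and $E''$ exchanged handles $B''$. The one genuinely substantive step is the anticommutation/pairing argument and the ensuing dimension bookkeeping; the degenerate case $\dim\dM=0$ requires no separate treatment since the condition on $\xi$ in \tref{pseudo}\eqref{lbcloc} is then vacuous.
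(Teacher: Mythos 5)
Your proof is correct and follows the same route the paper intends: the Corollary is stated as a direct consequence of \tref{pseudo}\eqref{lbcloc}, and you have simply supplied the (easy but omitted) verification that the fiberwise projection onto $E'$ satisfies that condition, using the anticommutation of the involution $J$ with $i\sigma_A(\xi)$ together with the resulting dimension count.
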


This corollary applies, in particular, if $A$ itself interchanges sections of $E'$ and $E''$.

\subsection{Examples}
\label{susexas}

In this section, we discuss some important elliptic boundary conditions.

\begin{example}[Differential forms]
Let 
\[
E= \bigoplus_{j=0}^n \Lambda^jT^*M = \Lambda^*T^*M 
\] 
be the sum of the bundles of $\C$-valued alternating forms over $M$.
The Dirac-type operator is given by $D=d+d^*$,
where $d$ denotes exterior differentiation.

As before, $\nu$ is the interior unit normal vector field along the boundary $\dM$
and $\nu^\flat$ the associated unit conormal one-form.
For each $x\in\dM$ and $0\le j\le n$, we have a canonical identification 
\[
\Lambda^jT^*_xM
= \big(\Lambda^jT^*_x\dM\big) \oplus \big(\nu^\flat(x)\wedge\Lambda^{j-1}T^*_x\dM\big), 
\quad
\phi = \phi^{\tan} + \nu^\flat\wedge \phi^\nor.
\]
The local boundary condition corresponding to the subbundle
$E':=\Lambda^*\dM \subset E_{|\dM}$ is called the {\em absolute boundary condition},
\[
B_\abs 
= 
\{\phi\in H^{1/2}(\dM,E) \mid \phi^\nor =0 \} ,
\]
while $E'':=\nu^\flat\wedge\Lambda^*\dM \subset E_{|\dM}$
yields the {\em relative boundary condition},
\[
B_\rel
=
\{\phi\in H^{1/2}(\dM,E) \mid \phi^{\tan} =0 \} .
\]
Both boundary conditions are known to be elliptic in the classical sense of Lopatinski and Shapiro,
see e.g.\,\cite[Lemma~4.1.1]{Gi}.
Indeed, for any $\xi\in T^*\dM$,
the symbol $\sigma_D(\xi)$ leaves the subbundles $E'$ and $E''$ invariant,
while $\sigma_D(\nu^\flat)$ interchanges them.
Hence $\sigma_{A}(\xi)$ interchanges $E'$ and $E''$.
By \cref{cor:involution}, both, the absolute and the relative boundary condition,
are local $D$-elliptic boundary conditions.
\end{example}

\begin{example}[Boundary chirality]\label{chirco1}
Let $\chi$ be an orthogonal involution of $E$ along $\dM$
and denote by $E|_\dM=E^+\oplus E^-$ the orthogonal splitting
into the eigenbundles of $\chi$ for the eigenvalues $\pm1$.
We say that $\chi$ is a \emph{boundary chirality} (with respect to $A$)
if $\chi$ anticommutes with $A$.
The associated boundary conditions \mbox{$B_{\pm\chi} = H^{1/2}(\dM,E^\pm)$} are $D$-elliptic, by \cref{cor:involution}.
In fact, \mbox{$\chi H^{1/2}_{(-\infty,0)}(A)=H^{1/2}_{(0,\infty)}(A)$}
since $\chi$ anticommutes with $A$, and hence
\begin{equation*}
  B_{\pm\chi}
  = \{ \phi \in \ker A \mid \chi\phi = \pm\phi \} 
  \oplus \{ \phi \pm \chi\phi \mid \phi \in H^{1/2}_{(-\infty,0)}(A) \} .
\end{equation*}
We have $B_{-\chi}=B_{\chi}^\perp$ and hence $\sigma_D(\nu^\flat)B_{-\chi}$
is the adjoint of $B_\chi$.

An example of a boundary chirality is $\chi=i\sigma_D(\nu^\flat)$
in the case where $D$ is formally self-adjoint and $A$ has been chosen to anticommute with $\chi$ as in \lref{norma}.
This occurs, for instance, if $D$ is a Dirac operator in the sense of Gromov and Lawson
and $A$ is the canonical boundary operator for $D$; see Appendix~\ref{sec:GromovLawson}.

There is a refinement which is due to Freed \cite[\S2]{Fr}:
enumerate the connected components of $\dM$ as $N_1, \ldots, N_k$
and associate a sign $\eps_j\in\{-1,1\}$ to each component $N_j$.
Then
\begin{equation*}
  \chi\phi := \sum\nolimits_j i\eps_j\sigma_D(\nu^\flat)\phi_j ,
\end{equation*}
where $\phi_j:=\phi_j|_{N_j}$, is again a boundary chirality.
It has the additional property that it commutes with $i\sigma_D(\nu^\flat)$;
compare \lref{freedex} and \tref{freedb}.
\end{example}

\begin{example}[Generalized Atiyah-Patodi-Singer boundary conditions]\label{gaps}
Let $D$ be a Dirac-type operator and $A$ an admissible boundary operator.
Fix $a\in\R$ and let
\begin{equation*}
  V_- := L^2_{(-\infty,a)}(A) , \quad
  V_+ := L^2_{[a,\infty)}(A) , \quad
  W_- =W_+ := \{0\} ,
  \quad\text{and}\quad g = 0 .
\end{equation*}
Then the $D$-elliptic boundary condition
\[
  B(a) = H_{(-\infty,a)}^{1/2}(A) .
\]
is known as a \emph{generalized Atiyah-Patodi-Singer boundary condition}.
The (nongeneralized) Atiyah-Patodi-Singer boundary condition
as studied in \cite{APS} is the special case $a=0$.
Generalized APS boundary conditions are not local.
However, they are still pseudo-local,
by \cite[p.~48]{APS} together with \cite{Se} or by \cite[Prop.~14.2]{BW}.
\end{example}

\begin{example}[Modified Atiyah-Patodi-Singer boundary conditions]\label{maps}
The modified APS boundary condition, introduced in \cite{HMR}, is given by
\begin{equation*}
  \BmAPS
  = \{ \phi \in H^{1/2}(\partial M,E) \mid
  \phi + \sigma_D(\nu^\flat)\phi \in H^{1/2}_{(-\infty,0)}(A) \} .
\end{equation*}
It requires that the spectral parts $\phi=\phi_{(-\infty,0)}+\phi_{0}+\phi_{(0,\infty)}$
of $\phi\in \BmAPS$ satisfy
\begin{equation*}
  \phi_{(0,\infty)} = - \sigma_D(\nu^\flat)\phi_{(-\infty,0)} 
  \quad\text{and}\quad
  \phi_{0} = - \sigma_D(\nu^\flat)\phi_{0} .
\end{equation*}
Since $\sigma_D(\nu^\flat)^2=-1$, we get $\phi_0=0$.
Thus $\BmAPS$ is $D$-elliptic with the choices
\[
  V_- = L^2_{(-\infty,0)}(A) , \,
  V_+ = L^2_{(0,\infty)}(A) , \,
  W_- = \ker(A) , \,
  W_+ = \{0\} ,
  \,\text{and}\, g = -\sigma_D(\nu^\flat) .
\]
\end{example}

\begin{example}[Transmission conditions]\label{ex:TransCond}
Let $M$ be a complete Riemannian manifold.
For the sake of simplicity, assume that the boundary of $M$ is empty, even though this is not really necessary.
Let $N\subset M$ be a compact hypersurface with trivial normal bundle.
Cut $M$ along $N$ to obtain a Riemannian manifold $M'$ with compact boundary.
The boundary $\dM'$ consists of two copies $N_1$ and $N_2$ of $N$.
We may write $M' = (M\setminus N)\sqcup N_1 \sqcup N_2$ (Fig.~\ref{fig:transmission}).

Let $E,F\to M$ be Hermitian vector bundles
and $D$ be a Dirac-type operator from $E$ to $F$.
We get induced bundles $E'\to M'$ and $F'\to M'$
and a Dirac-type operator $D'$ from $E'$ to $F'$.
For $\Phi\in H^1_{\loc}(M,E)$,
we get $\Phi'\in H^1_{\loc}(M',E')$ such that $\Phi'|_{N_1}=\Phi'|_{N_2}$.
We use this as a boundary condition for $D'$ on $M'$.
We set
\[
  B :=
  \left\{(\phi,\phi)\in H^{1/2}(N_1,E)\oplus H^{1/2}(N_2,E) \mid \phi\in H^{1/2}(N,E)\right\} ,
\]
\begin{figure}[h]
\begin{pspicture}(0,-5.3)(12.721025,1.1710808)
\psset{unit=7mm}
\begin{psclip}{\pspolygon[linecolor=white](-0.5,-7.5)(9,-7.5)(9,3)(-0.5,3)}
\psbezier[linewidth=0.04](12.501025,2.1510808)(11.741025,1.8310809)(6.443285,2.1186054)(5.4610248,2.111081)(4.478765,2.1035564)(0.0,0.99085987)(0.021024996,-0.008919082)(0.042049993,-1.008698)(5.102699,-2.1510808)(6.161025,-2.1089191)(7.219351,-2.0667572)(11.741025,-0.46891907)(12.701025,-0.68891907)
\psbezier[linewidth=0.04](12.381025,1.6910809)(12.041025,1.6110809)(7.0244503,1.4894793)(7.081025,0.4910809)(7.1376,-0.5073174)(11.701025,-0.028919082)(12.541025,-0.108919084)
\psbezier[linewidth=0.04](3.101025,0.4910809)(3.101025,-0.30891907)(5.621025,-0.3889191)(5.621025,0.41108093)
\psbezier[linewidth=0.04](3.301025,0.16832179)(3.661025,0.7910809)(4.9610248,0.7310809)(5.2583065,-0.028919082)
\psbezier[linewidth=0.06](4.021025,-0.108919084)(3.301025,-0.008919082)(3.341025,-1.8089191)(3.811025,-1.6756418)
\psbezier[linewidth=0.06,linestyle=dashed,dash=0.16cm 0.16cm](3.925706,-0.08891908)(4.781025,-0.39566067)(4.2703867,-1.9089191)(3.781025,-1.6839752)
\rput(6.302431,1.2360809){$M$}
\rput(3.1524312,-0.8439191){$N$}

\rput(0,-5){
\psbezier[linewidth=0.04](12.501025,2.1510808)(11.741025,1.8310809)(6.443285,2.1186054)(5.4610248,2.111081)(4.478765,2.1035564)(0.0,0.99085987)(0.021024996,-0.008919082)(0.042049993,-1.008698)(5.102699,-2.1510808)(6.161025,-2.1089191)(7.219351,-2.0667572)(11.741025,-0.46891907)(12.701025,-0.68891907)
\psbezier[linewidth=0.04](12.381025,1.6910809)(12.041025,1.6110809)(7.0244503,1.4894793)(7.081025,0.4910809)(7.1376,-0.5073174)(11.701025,-0.028919082)(12.541025,-0.108919084)
\psbezier[linewidth=0.04](3.101025,0.4910809)(3.101025,-0.30891907)(5.621025,-0.3889191)(5.621025,0.41108093)
\psbezier[linewidth=0.04](3.301025,0.16832179)(3.661025,0.7910809)(4.9610248,0.7310809)(5.2583065,-0.028919082)

\psdots[dotsize=0.2,linecolor=white](3.65,-1.7)(3.85,-0.1)

\psbezier[linewidth=0.06](4.021025,-0.108919084)(3.301025,-0.008919082)(3.341025,-1.8089191)(3.811025,-1.6756418)
\psbezier[linewidth=0.06,linestyle=dashed,dash=0.16cm 0.16cm](3.925706,-0.08891908)(4.781025,-0.39566067)(4.2703867,-1.9089191)(3.781025,-1.6839752)
\psbezier[linewidth=0.06](3.761025,-0.068919085)(3.041025,0.031080918)(3.081025,-1.7689191)(3.551025,-1.6356418)
\psbezier[linewidth=0.06,linestyle=dotted,dash=0.16cm 0.16cm](3.685706,-0.04891908)(4.541025,-0.35566065)(4.030387,-1.8689191)(3.541025,-1.6439753)

\rput(6.302431,1.2360809){$M'$}
\rput(4.0,-2.2){$N_2$}
\rput(3.3,-2.0){$N_1$}
}
\end{psclip}
\end{pspicture} 

\caption{Cutting $M$ along the hypersurface $N$}
\label{fig:transmission}
\end{figure}
where we identify
\[
  H^{1/2}(N_1,E)=H^{1/2}(N_2,E)=H^{1/2}(N,E) .
\]
Let $A=A_0 \oplus -A_0$ be an adapted boundary operator for $D'$.
Here $A_0$ is a self-adjoint Dirac-type operator on $\Cu(N,E)=\Cu(N_1,E')$
and similarly $-A_0$ on $\Cu(N,E)=\Cu(N_2,E')$.
The sign is due to the opposite relative orientations of $N_1$ and $N_2$ in $M'$.

To see that $B$ is a $D'$-elliptic boundary condition, put 
\begin{align*}
V_+ &:= L^2_{(0,\infty)}(A_0\oplus -A_0) 
      = L^2_{(0,\infty)}(A_0) \oplus  L^2_{(-\infty,0)}(A_0) , \\
V_- &:= L^2_{(-\infty,0)}(A_0\oplus -A_0) 
      = L^2_{(-\infty,0)}(A_0) \oplus L^2_{(0,\infty)}(A_0)  , \\
W_+ &:= \{(\phi,\phi)\in\ker(A_0)\oplus\ker(A_0)\}  , \\
W_- &:= \{(\phi,-\phi)\in\ker(A_0)\oplus\ker(A_0)\} ,
\end{align*}
and
\[
g:V_-^{1/2} \to V_+^{1/2}, \quad g=\begin{pmatrix}
                                   0 & \id \cr 
                                   \id & 0
                                   \end{pmatrix} .
\]
With these choices $B$ is of the form required in \dref{dell}.
We call these boundary conditions {\em transmission conditions}.
Transmission conditions are not pseudo-local.

If $M$ has a nonempty boundary and $N$ is disjoint from $\dM$,
let us assume that we are given a $D$-elliptic boundary condition for $\dM$.
Then the same discussion applies if one keeps the boundary condition on $\dM$
and extends $B$ to $\dM' = \dM \sqcup N_1 \sqcup N_2$ accordingly.
\end{example}

\section{Spectral theory}
\label{spec}

Throughout htis section we assume the \setup.

\subsection{Coercivity at infinity}

For spectral and index theory we will also need boundary conditions at infinity if $M$ is noncompact.
Such conditions go under the name coercivity at infinity.

\begin{definitions}
For $\kappa>0$, we say that $D$ is $\kappa$-{\em coercive at infinity}
if there is a compact subset $K\subset M$ such that
\begin{equation*}
  \kappa\| \Phi \|_{L^2(M)}
  \le \| D\Phi \|_{L^2(M)} ,
\end{equation*}
for all smooth sections $\Phi$ of $E$ with compact support in $M\setminus K$.
If $D$ is $\kappa$-coercive at infinity for some $\kappa>0$,
then we call $D$ {\em coercive at infinity}.
\end{definitions}
Boundary conditions are irrelevant for coercivity at infinity because the compact set $K$ can always be chosen such that it contains a neighborhood of $\dM$.

\begin{examples}
\begin{inparaenum}[1)]
\item
If $M$ is compact, then $D$ is $\kappa$-coercive at infinity, for any $\kappa>0$.
Simply choose $K=M$.\\
\item 
If $D$ is formally self-adjoint and, outside a compact subset $K\subset M$,
all eigenvalues of the endomorphism $\mathcal{K}$ in the Weitzenb\"ock formula
\eqref{eq:Weitzenboeck} are bounded below by a constant $\kappa>0$,
then we have, for all $\Phi\in\Cucc(M,E)$ with support disjoint from $K$,
\begin{equation*}
  \LzM{D\Phi}^2 = \LzM{\nabla\Phi}^2 + (\mathcal{K}\Phi,\Phi)_{L^2(M)} 
  \geq \kappa\LzM{\Phi}^2 .
\end{equation*}
Hence $D$ is $\sqrt{\kappa}$-coercive at infinity in this case.\\
\item
Let $M=S^n\times [0,\infty)$, endowed with the product metric $g_{0}+dt^2$,
where $g_0$ is the standard Riemannian metric of the unit sphere and $t$ is the standard coordinate on $[0,\infty)$.
Consider the usual Dirac operator $D$ acting on spinors,
and denote by $\nabla$ the Levi-Civita connection on the spinor bundle.
The Lichnerowicz formula gives
\begin{equation*}
  D^2 = \nabla^*\nabla + R/4 ,
\end{equation*}
where $R=n(n-1)$ is the scalar curvature of $M$ (and $S^n$).
It follows that $D$ is $\sqrt{n(n-1)}/2$-coercive at infinity.\\
\item
Consider the same manifold $M=S^n\times [0,\infty)$, but now equipped with the warped metric $e^{-2t}g_{0}+dt^2$.
The scalar curvature is easily computed to be
\begin{equation*}
  R = R(t) = n(n-1) e^{2t} - n(n+1) \to \infty .
\end{equation*}
It follows that this time the Dirac operator $D$
is $\kappa$-coercive at infinity, for any $\kappa>0$.
\end{inparaenum}
\end{examples}

\begin{thm}[{\cite[Thm.~8.5]{BB}}]\label{thmcoer}
Assume the \setup.
Then the following are equivalent:
\begin{asparaenum}[(i)]
\item 
$D$ is coercive at infinity;
\item
$D_{B,\max}: \dom D_{B,\max} \to L^2(M,F)$ has finite dimensional kernel and closed image for \emph{some} $D$-elliptic boundary condition $B$;
\item
$D_{B,\max}: \dom D_{B,\max} \to L^2(M,F)$ has finite dimensional kernel and closed image for \emph{all} $D$-elliptic boundary conditions $B$.
\end{asparaenum}
In particular, $D$ and $D^*$ are coercive at infinity
if and only if $D_{B,\max}$ and $D^*_{\Bad,\max}$ are Fredholm operators for some/all $D$-elliptic boundary conditions $B$.
\hfill$\blacksquare$
\end{thm}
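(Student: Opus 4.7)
I would prove the main equivalence as the cycle (i) $\Rightarrow$ (iii) $\Rightarrow$ (ii) $\Rightarrow$ (i), the middle implication being tautological. The Fredholm statement then follows by applying the equivalence to both $D$ and $D^*$ and using \tref{prodad} to identify the cokernel of $D_{B,\max}$ with $\ker D^*_{\Bad,\max}$.

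\textbf{Proof of (i) $\Rightarrow$ (iii).} Fix a $\kappa>0$ and a compact $K\subset M$ realising coercivity at infinity, and enlarge $K$ to a compact $K'\supset K\cup\dM$. Pick a smooth cutoff $\chi\in\Cu_c(M)$ with $\chi\equiv 1$ on $K$ and $\supp\chi\subset K'$. For $\Phi\in\dom D_{B,\max}$ write $\Phi=\chi\Phi+(1-\chi)\Phi$. The section $(1-\chi)\Phi$ is supported away from $\dM$ and, by Boundary Regularity I (\tref{regbou}), lies in $H^1_\loc(M,E)$; the same is true after approximating $\Phi$ in the graph norm by elements of $C^\infty_c(M,E)$ via \tref{maxdom}.\ref{ccr}. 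A standard cutoff approximation then places $(1-\chi)\Phi$ in $\dom D_{\min}$, and the coercivity inequality extends there by density, so
\[
\kappa\|(1-\chi)\Phi\|_{L^2(M)} \le \|D((1-\chi)\Phi)\|_{L^2(M)}
\le \|D\Phi\|_{L^2(M)} + \|\sigma_D(d\chi)\Phi\|_{L^2(K')}.
\]
Combined with the trivial bound $\|\chi\Phi\|_{L^2(M)}\le\|\Phi\|_{L^2(K')}$ this yields the key estimate
\begin{equation*}
\|\Phi\|_{L^2(M)} \le C\bigl(\|D\Phi\|_{L^2(M)} + \|\Phi\|_{L^2(K')}\bigr),
\qquad \Phi\in\dom D_{B,\max}.
\end{equation*}
Now, since $B$ is $D$-elliptic and $K'$ is compact, Boundary Regularity II (\tref{regula}) shows that the restriction map $\dom D_{B,\max}\to H^1(K'\cap M,E)$ is bounded in the graph norm, and Rellich's theorem makes the composition $\dom D_{B,\max}\to L^2(K',E)$ compact. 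A standard semi-Fredholm lemma (an estimate of the form $\|\cdot\|\le C(\|T\cdot\|+\|K\cdot\|)$ with $K$ compact implies finite-dimensional kernel and closed image) then yields (iii).

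\textbf{Proof of (ii) $\Rightarrow$ (i).} Let $N:=\ker D_{B,\max}$, finite-dimensional and contained in $L^2(M,E)$, and let $P_N$ be the orthogonal projection onto $N$. Because the image of $D_{B,\max}$ is closed, the open mapping theorem gives a constant $C$ with $\|\Psi\|_{L^2}\le C\|D\Psi\|_{L^2}$ for all $\Psi\in\dom D_{B,\max}\cap N^\perp$. Pick an orthonormal basis $\varphi_1,\dots,\varphi_k$ of $N$; since each $\varphi_i\in L^2(M,E)$, there is a compact $K\subset M$ so large that $\|(1-\mathbf 1_K)\varphi_i\|_{L^2}\le 1/(2\sqrt k)$ for every $i$. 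For any $\Phi\in\Cucc(M,E)$ with $\supp\Phi\subset M\setminus K$,
\[
|\langle\Phi,\varphi_i\rangle|=|\langle\Phi,(1-\mathbf 1_K)\varphi_i\rangle|\le\tfrac{1}{2\sqrt k}\|\Phi\|_{L^2},
\]
so $\|P_N\Phi\|_{L^2}\le\tfrac12\|\Phi\|_{L^2}$. Applying the closed-range estimate to $\Phi-P_N\Phi\in N^\perp\cap\dom D_{B,\max}$ and using $D P_N\Phi=0$,
\[
\tfrac12\|\Phi\|_{L^2}\le\|\Phi-P_N\Phi\|_{L^2}\le C\|D\Phi\|_{L^2},
\]
which is $(2C)^{-1}$-coercivity at infinity.

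\textbf{Fredholm statement and main obstacle.} By \tref{prodad}, $D^*_{\Bad,\max}$ is the Hilbert-space adjoint of $D_{B,\max}$; hence $\ker D^*_{\Bad,\max}=(\im D_{B,\max})^\perp$, and once the image of $D_{B,\max}$ is closed these two subspaces coincide with the cokernel of $D_{B,\max}$. Applying the equivalence above to $D$ and independently to $D^*$ then shows that $D_{B,\max}$ and $D^*_{\Bad,\max}$ are simultaneously Fredholm precisely when $D$ and $D^*$ are both coercive at infinity. The technical step I expect to be most delicate is the first one: extending the coercivity inequality from $\Cucc(M\setminus K,E)$ to the cutoff section $(1-\chi)\Phi\in\dom D_{B,\max}$ without losing the support control, together with verifying that the graph norm on $\dom D_{B,\max}$ dominates the $H^1$-norm on the compact collar $K'$---this is exactly where one really needs the $D$-ellipticity of $B$ via \tref{regula}, rather than any weaker hypothesis.
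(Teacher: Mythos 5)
Your proof is correct in structure and follows the same approach one would find in the cited reference: for (i)$\Rightarrow$(iii) a cutoff decomposition combined with boundary regularity, Rellich compactness on the compact collar $K'$, and the elementary semi-Fredholm criterion (an a~priori estimate with a compact remainder); for (ii)$\Rightarrow$(i) the open-mapping bound on the orthogonal complement of the finite-dimensional kernel, together with the observation that the projection onto the kernel is $L^2$-small on sections supported far out; the Fredholm statement then follows from \tref{prodad} exactly as you say.

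One point needs repair, though it is entirely cosmetic. You ask for $\chi\equiv1$ only on the original coercivity set $K$, but then claim that $(1-\chi)\Phi$ vanishes near $\dM$; this requires $\chi\equiv1$ on a \emph{neighborhood of} $\dM$, which your construction does not guarantee. The clean fix is the remark made right after the definition of coercivity at infinity: one may assume from the start, without loss of generality, that $K$ contains a neighborhood of $\dM$ (enlarging $K$ only shrinks the class of test sections and preserves the constant $\kappa$). With that convention in force, the cutoff with $\chi\equiv1$ on $K$ and $\supp\chi\subset K'$ yields a section $(1-\chi)\Phi$ supported in $M\setminus K$ and vanishing near $\dM$, hence in $\dom D_{\min}$, as needed. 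Extending the coercivity inequality from $C^\infty_{cc}(M\setminus K,E)$ to such sections by density also requires one auxiliary cutoff to keep the approximating sequence supported outside $K$ -- you rightly flag this as the delicate point, and it does go through with standard manipulations.
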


Extending the notion of Fredholm operator,
we say that a closed operator $T$ between Banach spaces $X$ and $Y$
is a \emph{left-} or \emph{right-Fredholm operator}
if the image of $T$ is closed and, respectively,
the kernel or the cokernel of $T$ is of finite dimension.
We say that $T$ is a \emph{semi-Fredholm operator}
if it is a left- or right-Fredholm operator, compare \cite[Section IV.5.1]{Ka}.
In this terminology,
\tref{thmcoer} says that $D_{B,\max}$ is a left-Fredholm operator for some/all $B$
if and only if $D$ is coercive at infinity.
For more on this topic, see \cite[IV.4 and IV.5]{Ka},
\cite[Appendix A]{BBC}, and \cite[Appendix A]{BB}.

In the case $X=Y$, we get corresponding \emph{essential} parts of the spectrum of $T$,
compare \cite[Section IV.5.6]{Ka} (together with footnotes).
We let
\begin{equation*}
  \spec_{\rm ess} T \subset \spec_{\rm nlf} T
  \subset \spec_{\rm nf} T \subset \spec T
\end{equation*}
be the set of $\lambda\in\C$ such that $T-\lambda$ is not a semi-Fredholm operator,
not a left-Fredholm operator, not a Fredholm operator, and not an isomorphism
from $\dom T$ to $X$, respectively, where \emph{ess} stands for \emph{essential}.
In the case where $X$ is a Hilbert space and where $T$ is self-adjoint,
$\ker T=(\im T)^\perp$ and $\spec T\subset\R$
so that, in particular, $\spec_{\rm ess} T=\spec_{\rm nf}T$.
Moreover, in this case, $\spec T\setminus\spec_{\rm ess}T$
consists of eigenvalues with finite multiplicities,
see Remark 1.11 in \cite[Section X.1.2]{Ka}.

\begin{cor}\label{corcoer}
Assume the \setup{} and $E=F$.
Let $B\subset H^{1/2}(\dM,E)$ be a $D$-elliptic boundary condition.
Let $\kappa>0$ and assume that $D$ is $\kappa$-coercive at infinity.
Then
\begin{equation*}
   \{ z \in \C \mid |z| < \kappa \} \cap \spec_{\rm nlf} D_{B,\max} = \emptyset .
\end{equation*}
If $D$ and $D^*$ are $\kappa$-coercive at infinity, then
\begin{equation*}
   \{ z \in \C \mid |z| < \kappa \} \cap \spec_{\rm nf} D_{B,\max} = \emptyset .
\end{equation*}
\end{cor}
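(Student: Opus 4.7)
The plan is to reduce the statement to \tref{thmcoer} by showing that for $|z|<\kappa$ the shifted operator $D-z$ inherits a positive coercivity at infinity and still admits $B$ as a $(D-z)$-elliptic boundary condition.

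First I would observe that $D-z$ (with $E=F$) is again a Dirac-type operator having the \emph{same} principal symbol as $D$. Hence any operator $A$ adapted to $D$ is also adapted to $D-z$ (equation \eqref{symba} only involves the principal symbols). Since $D$-ellipticity of $B$ in \dref{dell} is formulated purely in terms of the spectral decomposition of some adapted operator $A$, it follows that $B$ is also $(D-z)$-elliptic. Moreover, as a bounded zero-order perturbation does not change the maximal domain, one has $\dom(D-z)_{B,\max}=\dom D_{B,\max}$ and $(D-z)_{B,\max}=D_{B,\max}-z$ as operators on $L^2(M,E)$.

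Next I would promote $\kappa$-coercivity of $D$ to $(\kappa-|z|)$-coercivity of $D-z$. If $K\subset M$ is a compact set with $\kappa\|\Phi\|_{L^2(M)}\le\|D\Phi\|_{L^2(M)}$ for all $\Phi\in\Cucc(M,E)$ supported in $M\setminus K$, then the triangle inequality gives
\begin{equation*}
\|(D-z)\Phi\|_{L^2(M)} \ge \|D\Phi\|_{L^2(M)} - |z|\,\|\Phi\|_{L^2(M)} \ge (\kappa-|z|)\,\|\Phi\|_{L^2(M)}
\end{equation*}
for such $\Phi$, and $\kappa-|z|>0$ by hypothesis. Applying \tref{thmcoer} to $D-z$ and the $(D-z)$-elliptic boundary condition $B$ yields that $(D-z)_{B,\max}=D_{B,\max}-z$ has closed image and finite-dimensional kernel, i.e.\ it is a left-Fredholm operator. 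This proves the first assertion $z\notin\spec_{\rm nlf}D_{B,\max}$.

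For the second assertion I would run the same argument on the adjoint side: the formal adjoint of $D-z$ is $D^*-\bar z$, which shares the principal symbol of $D^*$; and if $D^*$ is $\kappa$-coercive at infinity then so is $D^*-\bar z$ with constant $\kappa-|z|>0$. The boundary condition $\Bad$ is $D^*$-elliptic by Remark~\ref{remell}, hence also $(D^*-\bar z)$-elliptic by the same symbol argument. Invoking \tref{thmcoer} once more makes $(D^*-\bar z)_{\Bad,\max}$ a left-Fredholm operator. By \tref{prodad} this operator is the Hilbert-space adjoint of $D_{B,\max}-z$; so the cokernel of $D_{B,\max}-z$ is isomorphic to $\ker(D^*_{\Bad,\max}-\bar z)$, which is finite-dimensional. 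Combined with the first part, $D_{B,\max}-z$ is a Fredholm operator, giving $z\notin\spec_{\rm nf}D_{B,\max}$. There is no real obstacle here; the only point requiring care is the first paragraph's remark that $D$-ellipticity of $B$ depends only on an adapted $A$ and therefore survives zero-order perturbations of $D$, so that \tref{thmcoer} becomes directly applicable to $D-z$.
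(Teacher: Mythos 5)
Your argument is correct and is essentially the paper's own proof: shift by $z$, note that $D-z$ is again of Dirac type with the same principal symbol (so $B$ stays elliptic and $\Bad$ is its adjoint condition), use the triangle inequality to get $(\kappa-|z|)$-coercivity at infinity, and apply \tref{thmcoer} to $D-z$ and, for the Fredholm statement, to $D^*-\bar z$. Your extra remarks (same adapted $A$, identification of the cokernel with $\ker(D^*_{\Bad,\max}-\bar z)$ via \tref{prodad} once closed range is known) merely spell out details the paper leaves implicit.
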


\begin{proof}
For any $z\in\C$,
the operators $D-z$ and $(D-z)^*=D^*-\bar z$ are of Dirac type
such that $(D-z)_{\max}=D_{\max}-z$ and $(D^*-\bar z)_{\max}=D^*_{\max}-\bar z$.
Moreover, $B$ is a $(D-z)$-elliptic and $\Bad$ a $(D^*-\bar z)$-elliptic boundary condition,
one the adjoint of the other.  
By the triangle inequality, if $D$ is $\kappa$-coercive and $|z| < \kappa$,
then $D-z$ is $(\kappa-|z|)$-coercive, and similarly for $D^*-\bar z$.
Thus \tref{thmcoer} applies.
\end{proof}

\usetagform{simple}

\begin{cor}\label{cor:essspecest}
Assume the \setup, that $E=F$, and that $D$ is formally self-adjoint.
Let $B\subset H^{1/2}(\dM,E)$ be a self-adjoint $D$-elliptic boundary condition.
If $D$ is $\kappa$-coercive at infinity for \emph{some} $\kappa>0$, 
then $D_{B,\max}$ is self-adjoint with
\begin{align*}
   (-\kappa,\kappa) \cap \spec_{\rm ess} D_{B,\max} &= \emptyset . 
\tag{\qed}
\end{align*}
\end{cor}

\usetagform{default}

\begin{cor}\label{cor:discrete}
Assume the \setup, that $E=F$, and that $D$ is formally self-adjoint.
Let $B\subset H^{1/2}(\dM,E)$ be a self-adjoint $D$-elliptic boundary condition.
If $D$ is $\kappa$-coercive at infinity for \emph{all} $\kappa>0$, 
then $D_{B,\max}$ is self-adjoint with
\[\spec_{\rm ess}D_{B,\max}=\emptyset .\]
In particular, the eigenspaces of $D$ are finite dimensional,
pairwise $L^2$-orthogonal,
and their sum spans $L^2(M,E)$ in the sense of Hilbert spaces.
Moreover, eigensections of $D$ are smooth on $M$ (up to the boundary). \qed
\end{cor}

\begin{remark}
If $M$ is compact,
then $D$ is $\kappa$-coercive at infinity for all $\kappa>0$.
Hence \cref{cor:discrete} applies if $M$ is compact with boundary.
On the other hand, the resolvent of $D_{B,\max}$ is compact in this case
so that the decomposition of $L^2(M,E)$ into finite dimensional eigenspaces
is also clear from this perspective.
\end{remark}

\subsection{Coercivity with respect to a boundary condition}
Now we discuss spectral gaps of $D$ about $0$.
We get interesting results for Dirac operators in the sense of Gromov and Lawson, see Appendix~\ref{sec:GromovLawson}.

\begin{definition}
For $\kappa>0$, we say that  $D$ is $\kappa$-{\em coercive with respect to a boundary condition} $B$ if
\begin{equation*}
  \kappa\| \Phi \|_{L^2(M)}
  \le \| D\Phi \|_{L^2(M)} ,
\end{equation*}
for all $\Phi\in C^\infty_c(M,E;B)$. 
\end{definition}

In contrast to coercivity at infinity, the boundary condition $B$ is now crucial for the concept of coercivity.

\usetagform{simple}

\begin{cor}\label{cor:GLcoercive}
Assume the \setup, that $E=F$, and that $D$ is formally self-adjoint.
Let $B\subset H^{1/2}(\dM,E)$ be a self-adjoint $D$-elliptic boundary condition.
If $D$ is $\kappa$-coercive with respect to $B$, for $\kappa>0$, 
then $D_{B,\max}$ is self-adjoint with
\[
(-\kappa,\kappa) \cap \spec D_{B,\max} = \emptyset.
\tag{\qed}
\]
\end{cor}

\usetagform{default}

\begin{thm}\label{thm:GLspec}
Assume the \setup{} with $E=F$ and that\\
\begin{inparaitem}[$\diamond$]
\item
$D$ is a Dirac operator in the sense of Gromov and Lawson;\\
\item
$B$ is a $D$-elliptic boundary condition;\\
\item
the canonical boundary operator $A:C^\infty(\dM,E)\to C^\infty(\dM,E)$ for $D$ satisfies $$\left((A-\tfrac{n-1}{2}H)\phi,\phi\right)\le0$$ for all $\phi\in B$,
where $H$ is the mean curvature $H$ along $\dM$ with respect to the interior unit normal vector field $\nu$;\\
\item
the endomorphism field $\mathcal{K}$ in the Weitzenb\"ock formula \eqref{eq:Weitzenboeck} satisfies $\mathcal{K}\ge \kappa >0.$
\end{inparaitem}
Then $D$ is $\sqrt{\tfrac{n\kappa}{n-1}}$-coercive with respect to $B$.
In particular, if $B$ is self-adjoint, then 
\[
\left(-\sqrt{\tfrac{n\kappa}{n-1}},\sqrt{\tfrac{n\kappa}{n-1}}\right) \cap \spec D_{B,\max} = \emptyset.
\]
\end{thm}

\begin{proof}
For any $\Phi\in C_c^\infty(M,E;B)$ we have by \eqref{eq:pf4} and \eqref{cauchydna}, again writing $\phi=\Phi|_\dM$,
\begin{align}
\tfrac{n-1}{n}\|D\Phi\|^2
&\ge
\int_M\langle\mathcal{K}\Phi,\Phi\rangle\dV
- \int_\dM (A-\tfrac{n-1}{2}H)|\phi|^2 \dS
\ge
\kappa \|\Phi\|^2.
\label{eq:eigest}
\end{align}
This proves $\sqrt{\tfrac{n\kappa}{n-1}}$-coerciveness with respect to $B$.
The statement on the spectrum now follows from \cref{cor:GLcoercive}.
\end{proof}

Here are some boundary conditions for which \tref{thm:GLspec} applies:

\begin{example}\label{ex:chiral}
Let $\chi$ be a boundary chirality with associated $D$-elliptic boundary condition $B_{\pm\chi} = H^{1/2}(\dM,E^\pm)$ as in \eref{chirco1}.
For $\phi,\psi\in B_{\chi}$, we have
\begin{equation*}
  ( A\phi,\psi )
  = (A\chi\phi,\psi )
  = - (\chi A\phi,\psi )
  = - ( A\phi,\chi\psi )
  = - ( A\phi,\psi ) .
\end{equation*}
Hence $(A\phi,\psi)=0$, for all $\phi,\psi\in B_{\chi}$,
and similarly for $B_{-\chi}$.
If $\chi$ anticommutes with $\sigma_D(\nu^\flat)$,
then $B_\chi$ and $B_{-\chi}$ are self-adjoint boundary conditions\footnote{If $\chi$ commutes with $\sigma_D(\nu^\flat)$,
then $B_\chi$ and $B_{-\chi}$ are adjoint to each other.}.
Hence \tref{thm:GLspec} applies if $H\ge0$.
In the case of the classical Dirac operator $D$ acting on spinors, this yields the eigenvalue estimate in \cite[Thm.~3]{HMR}.
\end{example}

\begin{example}\label{ex:APS}
The Atiyah-Patodi-Singer boundary condition
\begin{equation*}
 \BAPS = H^{1/2}_{(-\infty,0)}(A)
\end{equation*}
is $D$-elliptic with adjoint boundary condition
\begin{equation*}
  \BAPS^\ad = \BAPS\oplus\ker A .
\end{equation*}
Hence $D_{\BAPS,\max}$ is symmetric. 
If $\ker A$ is trivial, then $D_{B_{APS}}$ is self-adjoint and $\spec D_{B_{APS}}\subset\R$.
By definition of $\BAPS$, we have $(A\phi,\phi) \le -\mu_1\|\phi\|_{L^2(\dM)}^2$ for all $\phi\in\BAPS$ where $-\mu_1$ is the largest negative eigenvalue of $A$.
Hence \tref{thm:GLspec} applies if $H\ge-\tfrac{2}{n-1}\mu_1$.

In the case of the classical Dirac operator $D$ acting on spinors, this yields the eigenvalue estimate for the APS boundary condition in \cite[Thm.~2]{HMR}.
Note that the assumption $\ker A=0$ is missing in Theorem~2 of \cite{HMR}.
In fact, if $\ker A$ is nontrivial, then $D_{B_{APS}}$ is not self-adjoint and $\spec D_{B_{APS}}=\C$,
compare \cite[Section V.3.4]{Ka}.

If we can choose a subspace $L\subset \ker A$ as in \tref{normb}.\ref{normbl},
then $B=H^{1/2}_{(-\infty,0)}(A) \oplus L$ is a self-adjoint $D$-elliptic boundary condition.
We have $(A\phi,\phi)\le 0$ for all $\phi\in B$ and \tref{thm:GLspec} applies if $H\ge0$.
\end{example}

\begin{example}\label{ex:mAPS}
The modified APS boundary condition
\begin{equation*}
  \BmAPS
  = \{ \phi \in H^{1/2}(\partial M,E) \mid
  \phi + \sigma_D(\nu^\flat)\phi \in H^{1/2}_{(-\infty,0)}(A) \} 
\end{equation*}
as in \eref{maps} is $D$-elliptic with adjoint condition
\begin{equation*}
\begin{split}
  \BmAPS^\ad
  &= \{ \phi \in H^{1/2}(\partial M,E) \mid
  \phi_{(0,\infty)} = - \sigma_D(\nu^\flat)\phi_{(-\infty,0)} \} \\
  &= \BmAPS \oplus \ker A .
\end{split}
\end{equation*}
Hence $D_{\BmAPS,\max}$ is symmetric.
The remaining part of the discussion is as in the previous example,
except that we have $(A\phi,\psi)=0$, for all $\phi,\psi\in\BmAPS^\ad$.
In particular, \tref{thm:GLspec} applies if $\ker A=0$ and $H\ge0$.
In the case of the classical Dirac operator $D$ acting on spinors, this yields the eigenvalue estimate in \cite[Thm.~5]{HMR}. 
As in the case of the APS boundary condition, the requirement $\ker A=0$ needs to be added to the assumptions of Theorem~5 in \cite{HMR}. \end{example}

Next we discuss under which circumstances the ``extremal values'' $\pm\sqrt{\frac{n\kappa}{n-1}}$ actually belong to the spectrum.
For this purpose, we make the following

\begin{definition}
Let $D$ be a formally self-adjoint Dirac operator in the sense of Gromov and Lawson with associated connection $\nabla$.
A section $\Phi\in C^\infty(M,E)$ is called a \emph{$D$-Killing section} if 
\begin{equation}
\nabla_X\Phi = \alpha\cdot\sigma_D(X^\flat)^*\Phi
\label{eq:Killing}
\end{equation}
for some constant $\alpha\in\R$ and all $X\in TM$.
The constant $\alpha$ is called the \emph{Killing constant} of $\Phi$.
\end{definition}

\begin{remarks}\label{KillingEigenschaften}
\begin{inparaenum}[1)]
\item
If $D$ is the classical Dirac operator, then spinors satisfying \eqref{eq:Killing} are called Killing spinors.
This motivates the terminology.\\
\item 
Equation \eqref{eq:Killing} is overdetermined elliptic.
Hence the existence of a nontrivial solution imposes strong restrictions on the underlying geometry.
For instance, if a Riemannian spin manifold carries a nontrivial Killing spinor, it must be Einstein \cite[Thm.~B]{Fri}.
See \cite{B} for a classification of manifolds admitting Killing spinors.\\
\item\label{KillingEigensection}
Any $D$-Killing section with Killing constant $\alpha$ is an eigensection of $D$ for the eigenvalue $n\alpha$:
\begin{align*}
D\Phi
&=
\sum_{j=1}^n \sigma_D(e_j^\flat)\nabla_{e_j}\Phi 
=
\alpha\cdot\sum_{j=1}^n \sigma_D(e_j^\flat)\sigma_D(e_j^\flat)^*\Phi 
=
n\alpha\Phi.
\end{align*}
\item
Any $D$-Killing section satisfies the twistor equation \eqref{twist}:
\begin{align*}
\nabla_X\Phi 
&= 
\alpha\cdot\sigma_D(X^\flat)^*\Phi
=
\frac1n\cdot\sigma_D(X^\flat)^*D\Phi .
\end{align*}
\item
Since $\sigma_D(X^\flat)$ is skewhermitian, the connection $\hat\nabla_X=\nabla_X - \alpha\cdot\sigma_D(X^\flat)^*$ is also a metric connection.
Since $D$-Killing sections are precisely $\hat\nabla$-parallel sections, we conclude that any $D$-Killing section $\Phi$ has constant length $|\Phi|$.
\end{inparaenum}
\end{remarks}

\begin{thm}
In addition to the assumptions in \tref{thm:GLspec} assume that 
$M$ is compact and that the boundary condition $B$ is self-adjoint.

Then $\sqrt{\frac{n\kappa}{n-1}}\in\spec(D)$ or $-\sqrt{\frac{n\kappa}{n-1}}\in\spec(D)$ if and only if 
there is a nontrivial $D$-Killing section $\Phi$ with $\phi=\Phi|_\dM\in B$ and Killing constant $\sqrt{\frac{\kappa}{n(n-1)}}$ or $-\sqrt{\frac{\kappa}{n(n-1)}}$, respectively.
\end{thm}

\begin{proof}
Let $\sqrt{\frac{n\kappa}{n-1}}\in\spec(D)$, the case $-\sqrt{\frac{n\kappa}{n-1}}\in\spec(D)$ being treated similarly.
Since $M$ is compact, the spectral value $\sqrt{\frac{n\kappa}{n-1}}$ must be an eigenvalue by \cref{cor:essspecest}.
Let $\Phi$ be an eigensection of $D$ for the eigenvalue $\sqrt{\frac{n\kappa}{n-1}}$ satisfying the boundary condition.
Then we must have equality everywhere in the chain of inequalities \eqref{eq:eigest}.
In particular, $\Phi$ must solve the twistor equation \eqref{twist}.
Hence
\[
\nabla_X\Phi = \textstyle{\frac1{n}} \sigma_D(X^\flat)^* D\Phi = \sqrt{\frac{\kappa}{n(n-1)}}\sigma_D(X^\flat)^* \Phi .
\]
Conversely, if $\Phi$ is a $D$-Killing section with Killing constant $\sqrt{\frac{\kappa}{n(n-1)}}$, then $\Phi$ is an eigensection of $D$ for the eigenvalue $\sqrt{\frac{n\kappa}{n-1}}$, by Remark~\ref{KillingEigenschaften}.\ref{KillingEigensection}.
\end{proof}

\begin{example}
Let $M$ be the closed geodesic ball of radius $r\in (0,\pi)$ about $e_1$ in the unit sphere $S^n$.
The sectional curvature of $M$ is identically equal to $1$, its scalar curvature to $n(n-1)$.
The boundary $\dM$ is a round sphere of radius $\sin(r)$ (Fig.~\ref{fig:sphere}).
Its mean curvature with respect to the interior unit normal is given by $H= \cot(r)$.
\end{example}

We consider the classical Dirac operator acting on spinors.
The restriction of the spinor bundle to the boundary yields the spinor bundle of the boundary if $n$ is odd and the sum of two copies of the spinor bundle of the boundary if $n$ is even.
Accordingly, the canonical boundary operator is just the classical Dirac operator of the boundary if $n$ is odd and the direct sum of it and its negative if $n$ is even.
The kernel of the boundary operator is trivial.
\begin{figure}[h]
\begin{pspicture}(-2.5,-2.4)(3,2.5)
\psarc(0,0){2}{315}{225} 
\psline[linewidth=0.1ex,linestyle=dashed](0,2)(0,-1.4142)
\psline[linewidth=0.1ex](0,0)(1.4142,-1.4142)
\psline[linewidth=0.2ex](1.4142,-1.4142)(-1.4142,-1.4142)
\psarc[linewidth=0.1ex](0,0){0.4}{315}{90}

\psdots(0,0)(0,2)

\rput[b](0,2.1){$e_1$}
\rput[tr](0.6,-0.6){$1$}
\rput[b](0.6,-1.4){$\sin(r)$}
\rput[bl](0.1,0){r}
\end{pspicture}

\caption{Geodesic ball of radius $r$ in the unit sphere}
\label{fig:sphere}
\end{figure}

\tref{thm:GLspec} applies with all the boundary conditions described in Examples~\ref{ex:chiral}--\ref{ex:mAPS} if $r\le\frac{\pi}{2}$ because then $H\ge0$.
Therefore the spectrum of the Dirac operator on $M$ subject to any of these boundary conditions does not intersect $(-\frac{n}{2},\frac{n}{2})$.
The largest negative Dirac eigenvalue of the boundary is given by $-\mu_1=-\frac{n-1}{2\sin(r)}$.
Since we have 
\[
\tfrac{n-1}{2}H = \tfrac{n-1}{2}\cot(r) \ge  -\tfrac{n-1}{2}\sin(r) = -\mu_1, 
\]
\tref{thm:GLspec} applies in the case of APS boundary conditions (\eref{ex:APS}) for all $r\in(0,\pi)$.

The sphere $S^n$ and hence $M$ do possess nontrivial Killing spinors for both Killing constants $\pm\frac12$.
The restriction of such a Killing spinor to $\dM$ never satisfies the APS boundary conditions.
Thus the equality case in \tref{thm:GLspec} does not occur and $\pm\frac{n}{2}$ cannot lie in the spectrum of $D$ on $M$ subject to APS conditions.
Hence, under APS boundary conditions and for any $r\in(0,\pi)$, the spectrum of $D$ on $M$ does not intersect $[-\frac{n}{2},\frac{n}{2}]$.

The modified APS boundary conditions are satisfied by the restrictions of the Killing spinors only if $r=\frac{\pi}{2}$.
In this case, $\frac{n}{2}$ is an eigenvalue of $D$ on $M$.

\section{Index theory}
\label{inth}

Throughout this section, assume the \setup.
In \tref{thmcoer} we have seen that $D_{B,\max}: \dom D_{B,\max} \to L^2(M,F)$ is a Fredholm operator for any $D$-elliptic boundary condition provided $D$ and $D^*$ are coercive at infinity.
This is the case if $M$ is compact, for instance.
The index is the number
\[
  \ind D_{B,\max} = \dim\ker D_{B,\max} - \dim\ker D^*_{\Bad,\max} \in \Z .
\]
If $B$ is a $D$-elliptic boundary condition, then, by Theorems~\ref{maxdom}.\ref{boucon} and \ref{regbou},  $D_{B,\max}$ has domain
\begin{equation*}
  \dom D_{B,\max}
  = \{\Phi\in\dom D_{\max} \mid \mathcal R\Phi\in B\}
  \subset H^1_{\loc}(M,E) .
\end{equation*}
Since $\dom D_{B,\max}$ is contained in $H^1_{\loc}(M,E)$,
we will briefly write $D_B$ instead of $D_{B,\max}$.

\subsection{Fredholm property and index formulas}
\label{susefrepro}

As a direct consequence of \tref{thmcoer} we get

\begin{cor}[{\cite[Cor.~8.7]{BB}}]\label{corfred2}
Assume the \setup{} and that $D$ and $D^*$ are coercive at infinity.
Let $B$ be a $D$-elliptic boundary condition and let $\check C$ be a closed complement of $B$ in $\check H(A)$.
Let $\check P:\check H(A)\to\check H(A)$ be the projection with kernel $B$ and image $\check C$.
Then
\begin{equation*}
\check L: \dom D_{\max} \to L^2(M,F)\oplus\check C,
  \quad \check L\Phi = (D_{\max}\Phi,\check P\mathcal R\Phi),
\end{equation*}
is a Fredholm operator with the same index as $D_B$.\qed
\end{cor}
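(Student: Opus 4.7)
The plan is to reduce the Fredholm property of $\check L$ to that of $D_B$, which holds under the coercivity assumption by Theorem~\ref{thmcoer}. Continuity of $\check L$ is immediate: $D_{\max}:\dom D_{\max}\to L^2(M,F)$ is bounded with respect to the graph norm by definition, and $\check P\circ\mathcal R:\dom D_{\max}\to\check H(A)$ is bounded by Theorem~\ref{maxdom}.\ref{trace} together with continuity of the projection $\check P$. For the kernel, $\check L\Phi=0$ is equivalent to $D_{\max}\Phi=0$ and $\mathcal R\Phi\in\ker\check P=B$, i.e.\ to $\Phi\in\dom D_B$ with $D_B\Phi=0$. Hence $\ker\check L=\ker D_B$.

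The main step is to decompose $\dom D_{\max}$ according to the splitting $\check H(A)=B\oplus\check C$. Since $\mathcal R:\dom D_{\max}\to\check H(A)$ is a continuous surjection between Hilbert spaces with closed kernel $\dom D_{\min}$ (Theorem~\ref{maxdom}.\ref{maxmin}), it admits a bounded right inverse $\mathcal E$. Given $\Phi\in\dom D_{\max}$, put $c:=\check P\mathcal R\Phi\in\check C$ and $\Psi:=\Phi-\mathcal E c$; then $\mathcal R\Psi=\mathcal R\Phi-c\in B$, hence $\Psi\in\dom D_B$, and
\[
\check L\Phi \,=\, \bigl(D_{\max}\mathcal E c+D_B\Psi,\,c\bigr).
\]

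To finish, consider the bounded linear map $S:L^2(M,F)\oplus\check C\to L^2(M,F)/\im D_B$ given by $S(g,c):=[g-D_{\max}\mathcal E c]$; this is well defined and continuous because $\im D_B$ is closed (as $D_B$ is Fredholm). The displayed formula for $\check L\Phi$ shows $S\circ\check L=0$, so $\im\check L\subset\ker S$. Conversely, if $S(g,c)=0$ then $g-D_{\max}\mathcal E c=D_B\Psi$ for some $\Psi\in\dom D_B$, and $\Phi:=\mathcal E c+\Psi$ satisfies $\check L\Phi=(g,c)$. Hence $\im\check L=\ker S$ is closed, and since $S$ is surjective (take $c=0$), it induces an isomorphism $\coker\check L\cong\coker D_B$. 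Combined with $\ker\check L=\ker D_B$, this yields the Fredholm property of $\check L$ and the equality $\ind\check L=\ind D_B$. The only mildly technical point is the existence of the bounded splitting $\mathcal E$, which follows from a standard application of the open mapping theorem in the Hilbert space setting; everything else is essentially bookkeeping.
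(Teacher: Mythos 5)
Your proposal is correct and follows the same route as the paper, which states the corollary as a direct consequence of \tref{thmcoer} (citing \cite[Cor.~8.7]{BB}): the Fredholm property and index of $\check L$ are reduced to those of $D_B$. Your argument merely fills in the bookkeeping the paper leaves implicit --- identification of kernels, a bounded right inverse $\mathcal E$ of $\mathcal R$ via the open mapping theorem, and the induced isomorphism $\coker\check L\cong\coker D_B$ --- and all of these steps are sound.
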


\usetagform{simple}

\begin{cor}[{\cite[Cor.~8.8]{BB}}]\label{cor:AgraDy}
Assume the \setup{} and that $D$ and $D^*$ are coercive at infinity.
Let $B_1 \subset B_2 \subset H^{1/2}(\dM,E)$ be $D$-elliptic boundary conditions for $D$.
Then $\dim(B_2/B_1)$ is finite and
\[
\ind(D_{B_2}) = \ind(D_{B_1}) + \dim(B_2/B_1).
\tag{$\blacksquare$}
\]
\end{cor}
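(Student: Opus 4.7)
The plan is to identify $B_2/B_1$ with a quotient of the operator domains and to read the finite-dimensionality and the index equality off from the Fredholmness of $D_{B_1}$ and $D_{B_2}$. By \tref{maxdom}, the trace map induces a surjection $\mathcal R:\dom D_{B_i}\to B_i$ whose kernel is $\dom D_{\min}$, for $i=1,2$. Since $\dom D_{B_1}\subset\dom D_{B_2}$, the trace therefore induces a natural isomorphism
\[
\dom D_{B_2}/\dom D_{B_1} \;\xrightarrow{\;\cong\;}\; B_2/B_1,
\]
so it suffices to analyze the left-hand side.

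Because $D_{B_1}$ is literally the restriction of $D_{B_2}$ to $\dom D_{B_1}$, the operator $D_{B_2}$ descends to a well-defined linear map
\[
\bar D\colon \dom D_{B_2}/\dom D_{B_1} \longrightarrow L^2(M,F)/\im D_{B_1}.
\]
A short algebraic chase identifies
\[
\ker\bar D \cong \ker D_{B_2}/\ker D_{B_1}
\quad\text{and}\quad
\im\bar D \cong \im D_{B_2}/\im D_{B_1}.
\]
By \tref{thmcoer} (applied to both boundary conditions), $D_{B_1}$ and $D_{B_2}$ are Fredholm, so $\ker D_{B_2}$ is finite-dimensional and the target $L^2(M,F)/\im D_{B_1}=\coker D_{B_1}$ is finite-dimensional. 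Hence $\ker\bar D$ and $\im\bar D$ are both finite-dimensional, which by rank–nullity forces $\dom D_{B_2}/\dom D_{B_1}\cong B_2/B_1$ to be finite-dimensional.

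With finiteness in hand, the index formula drops out of dimension counting:
\begin{align*}
\dim(B_2/B_1)
&= \dim\ker\bar D + \dim\im\bar D \\
&= \bigl(\dim\ker D_{B_2}-\dim\ker D_{B_1}\bigr) + \bigl(\dim\coker D_{B_1}-\dim\coker D_{B_2}\bigr) \\
&= \ind D_{B_2}-\ind D_{B_1}.
\end{align*}
I do not anticipate a serious obstacle; the argument is essentially linear algebra layered on Fredholmness. The one step worth double-checking is the identification $\im\bar D\cong\im D_{B_2}/\im D_{B_1}$, which uses only $\im D_{B_1}\subset\im D_{B_2}$ and the tautology $\bar D[\Phi]=[D\Phi]$. \cref{corfred2} is not actually needed for this deduction.
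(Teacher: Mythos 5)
Your argument is correct, and the linear-algebra bookkeeping is tight. The identification $\dom D_{B_2}/\dom D_{B_1}\cong B_2/B_1$ via the trace map (Theorem \ref{maxdom}, parts 2--4), the well-definedness of $\bar D$ because $D_{B_1}$ is the restriction of $D_{B_2}$, and the two isomorphisms for $\ker\bar D$ and $\im\bar D$ all check out; and the final step correctly uses that a linear map with finite-dimensional kernel and image has finite-dimensional domain, together with dimension counting in short exact sequences.

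Since the paper cites this result from \cite{BB} without reproducing a proof, a direct comparison is not possible here, but a few remarks are worth making. Your route is the ``abstract Agranovich--Dynin'' argument: once one knows that both $D_{B_1}$ and $D_{B_2}$ are Fredholm (Theorem \ref{thmcoer}) and that $B_2/B_1$ is realized by the trace map as $\dom D_{B_2}/\dom D_{B_1}$, everything else is soft. The paper's presentation places Corollary \ref{corfred2} (the auxiliary operator $\check L$ on $\dom D_{\max}$) immediately before this statement, which hints at an alternative route in which one compares the two Fredholm operators $\check L_1,\check L_2$ built from complements of $B_1,B_2$; that approach buys a more uniform treatment when one subsequently perturbs $B$ (e.g.\ homotopies of boundary conditions), but for the fixed inclusion $B_1\subset B_2$ it is heavier machinery than needed. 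Your observation that Corollary \ref{corfred2} is not required is accurate: the only external inputs you use are Theorems \ref{maxdom} and \ref{thmcoer}. One small presentational point: when you write $\dim(B_2/B_1)=\dim\ker\bar D+\dim\im\bar D$, it is worth saying explicitly that this equality \emph{is} the finiteness claim (a linear map with finite-dimensional kernel and image has finite-dimensional source), so that the reader does not think finiteness of $B_2/B_1$ is being assumed.
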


\usetagform{default}

\begin{example}\label{exaAgDy}
For the generalized Atiyah-Patodi-Singer boundary conditions as in \eref{gaps}
and $a<b$, we have
\[
  \ind D_{B(b)} = \ind D_{B(a)} +  \dim L^2_{[a,b)}(A) .
\]
\end{example}

The following result says that index computations for $D$-elliptic boundary conditions
can be reduced to the case of generalized Atiyah-Patodi-Singer boundary conditions.

\usetagform{simple}

\begin{thm}[{\cite[Thm.~8.14]{BB}}]\label{thmad}
Assume the \setup{} and that $D$ and $D^*$ are coercive at infinity.
Let $B \subset H^{1/2}(\dM,E)$ be a $D$-elliptic boundary condition.
Then we have, in the representation of $B$ as in Remark~\ref{remell}.\ref{specialdecomposition},
\[
  \ind D_B = \ind D_{B(a)} + \dim W_+ - \dim W_- .
\]
\end{thm}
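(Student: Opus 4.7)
The plan is to bridge $B$ and $B(a)$ via two auxiliary $D$-elliptic boundary conditions,
\[
  B_0 := V_-^{1/2} \oplus W_+
  \quad\text{and}\quad
  \tilde B := V_-^{1/2} \oplus W_- \oplus W_+ ,
\]
where $B_0$ is obtained from $B$ by deforming $g$ to $0$, and $\tilde B$ contains both $B_0$ and $B(a) = V_-^{1/2} \oplus W_-$. Both satisfy Definition~\ref{dell} once the cutoff parameter is enlarged enough that the finite-dimensional summands $W_\pm$ fit simultaneously into $L^2_{(-\infty,a']}(A)$ and $L^2_{[-a',\infty)}(A)$, which is automatic. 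The proof then splits into two independent steps: first, show $\ind D_B = \ind D_{B_0}$ by a compact perturbation; second, show $\ind D_{B_0} - \ind D_{B(a)} = \dim W_+ - \dim W_-$ by two applications of \cref{cor:AgraDy}.

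For the first step, invoke \cref{corfred2} with the common complement
\[
  \check C := V_+^{-1/2} \oplus W_- ,
\]
which is a closed algebraic complement of both $B$ and $B_0$ in $\check H(A) = V_-^{1/2} \oplus W_- \oplus V_+^{-1/2} \oplus W_+$. Solving the two decompositions $(x,y,z,w) = b + c$ for a generic element $(x,y,z,w) \in \check H(A)$ yields the oblique projections
\[
  \check P(x,y,z,w) = (0,y,z - gx,0)
  \quad\text{and}\quad
  \check P_0(x,y,z,w) = (0,y,z,0) ,
\]
so that $\check P - \check P_0$ acts only as $-g \colon V_-^{1/2} \to V_+^{-1/2}$. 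Since $g$ has order zero, it maps $V_-^{1/2}$ boundedly into $V_+^{1/2}$, and the inclusion $V_+^{1/2} \hookrightarrow V_+^{-1/2}$ is compact by Rellich; precomposing with the continuous trace $\mathcal R$ from \tref{maxdom}.\ref{trace}, the two Fredholm operators $\check L, \check L_0 \colon \dom D_{\max} \to L^2(M,F) \oplus \check C$ differ by a compact operator, so $\ind D_B = \ind \check L = \ind \check L_0 = \ind D_{B_0}$.

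For the second step, both $B_0 \subset \tilde B$ and $B(a) \subset \tilde B$ are inclusions of $D$-elliptic boundary conditions with finite-dimensional quotients of dimensions $\dim W_-$ and $\dim W_+$ respectively. Applying \cref{cor:AgraDy} to each and subtracting gives $\ind D_{B_0} - \ind D_{B(a)} = \dim W_+ - \dim W_-$, which combined with the first step yields the claimed identity. The main obstacle is the first step: one must identify the two oblique projections explicitly onto a common complement with different kernels, and then leverage the interaction between the order-zero hypothesis on $g$ and Rellich's embedding to obtain the crucial compactness. The $D$-ellipticity checks for $B_0$ and $\tilde B$ and the telescoping in the second step are essentially bookkeeping once the cutoff $a$ is allowed to grow to absorb the finitely many eigenvalues in $W_\pm$.
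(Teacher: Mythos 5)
Your argument is correct, and its second half is exactly the paper's: the auxiliary condition $\tilde B=B(a)\oplus W_+$ is the paper's $B'$, and the two applications of \cref{cor:AgraDy} with the telescoping are identical. Where you genuinely depart from the paper is the reduction to $g=0$. The paper replaces $g$ by $sg$, $s\in[0,1]$, and quotes the (nontrivial) fact that the index is constant along a continuous family of $D$-elliptic boundary conditions; you instead fix the single common complement $\check C$ (your $V_+^{-1/2}$ should be read as the $H^{-1/2}$-closure of $V_+$ inside $\check H(A)$, not $V_+\cap H^{-1/2}$), compute the two oblique projections explicitly, and observe that their difference acts as $-g$, which lands in $V_+^{1/2}$ and is therefore compact into $\check C$ by Rellich; \cref{corfred2} and compact-perturbation invariance of the Fredholm index then give $\ind D_B=\ind D_{B_0}$. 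For this particular deformation your route is more elementary and self-contained, using only tools stated in the survey, whereas the paper's deformation theorem is stronger and handles arbitrary continuous families of boundary conditions; note also that the $H^{1/2}$-boundedness of $g\colon V_-^{1/2}\to V_+^{1/2}$, which you use, follows from the order-zero property via the closed graph theorem.

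One small correction to your bookkeeping: the claim that one can enlarge the cutoff so that $W_\pm$ fit simultaneously into $L^2_{(-\infty,a']}(A)$ and $L^2_{[-a',\infty)}(A)$, ``which is automatic'', is not automatic. In the representation of Remark~\ref{remell}.\ref{specialdecomposition} the space $W_-$ is a finite-dimensional smooth subspace of $L^2_{(-\infty,a)}(A)$ but need not be spanned by eigensections of $A$, so it can have spectral components at arbitrarily negative eigenvalues and then lies in no $L^2_{[-a',\infty)}(A)$. The fix is immediate and does not affect your argument: to see that $\tilde B$ is $D$-elliptic, write $\tilde B=W_+\oplus(V_-\oplus W_-)^{1/2}$ and use the $L^2$-orthogonal decomposition with $V_-'=V_-\oplus W_-=L^2_{(-\infty,a)}(A)$, $W_-'=\{0\}$, $V_+'=V_+$, $W_+'=W_+$ and $g'=0$; the condition $B_0$ needs no new decomposition at all, since it is the original one with $g$ replaced by $0$.
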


\usetagform{default}

\begin{proof}[Sketch of proof]
Replacing $g$ by $s g$, $s\in[0,1]$, yields a continuous $1$-parameter family
of $D$-elliptic boundary conditions.
One can show that the index stays constant under such a deformation of boundary conditions.
Therefore, we can assume without loss of generality that $g=0$,
i.e., $B=W_+ \oplus V_-^{1/2}$.
Consider one further boundary condition, 
\[
B' := W_- \oplus W_+ \oplus V_-^{1/2}
= H^{1/2}_{(-\infty,a)}(A) \oplus W_+ = B(a) \oplus W_+ .
\]
Applying \cref{cor:AgraDy} twice we conclude
\begin{align*}
\ind(D_B) 
&= 
\ind(D_{B'}) - \dim W_- 
=
\ind(D_{B(a)}) + \dim W_+ - \dim W_- . \qedhere
\end{align*}
\end{proof}

\subsection{Relative index theory}
\label{suserelind}

Assume the \setup{} throughout the section.
For convenience assume also that $M$ is connected and that $\dM=\emptyset$.
For what follows, compare Example~\ref{ex:TransCond}.
Let $N$ be a closed and two-sided hypersurface in $M$.
Cut $M$ along $N$ to obtain a manifold $M'$, possibly connected,
whose boundary $\dM'$ consists of two disjoint copies $N_1$ and $N_2$ of $N$,
see Figure~1 on page~\pageref{fig:transmission}.
There are natural pull-backs $E'$, $F'$, and $D'$
of $E$, $F$, and $D$ from $M$ to $M'$.
Choose an adapted operator $A$ for $D'$ along $N_1$.
Then $-A$ is an adapted operator for $D'$ along $N_2$
and will be used in what follows.

\begin{thm}[{Splitting Theorem, \cite[Thm.~8.17]{BB}}]\label{split}
For $M$, $M'$, and notation as above, $D$ and $D^*$ are coercive at infinity
if and only if $D'$ and $(D')^*$ are coercive at infinity.
In this case, $D$ and $D'_{B_1\oplus B_2}$ are Fredholm operators with
\[
  \ind D = \ind D'_{B_1\oplus B_2} ,
\]
where $B_1=B(a)=H^{1/2}_{(-\infty,a)}(A)$ and $B_2=H^{1/2}_{[a,\infty)}(A)$,
considered as boundary conditions along $N_1$ and $N_2$, respectively.
More generally,
we may choose any $D$-elliptic boundary condition $B_1\subset H^{1/2}(N,E)$
and its $L^2$-orthogonal complement $B_2\subset H^{1/2}(N,E)$.
\hfill$\blacksquare$
\end{thm}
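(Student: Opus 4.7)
The plan is to prove the theorem in three stages: coercivity transfer, Fredholm property, and the index identity via a reduction to the transmission boundary condition $B_T$ from Example~\ref{ex:TransCond}.

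First, I would establish that coercivity at infinity is invariant under cutting. Since $N$ is compact, it has a relatively compact tubular neighborhood $U \subset M$, whose lift $U' \subset M'$ is likewise relatively compact. The complements $M\setminus U$ and $M'\setminus U'$ are naturally isometric, with $D$ and $D'$ agreeing under this identification. Because coercivity at infinity is a statement about sections supported outside some compact set, it transfers in both directions, and the same holds for $D^*$ and $(D')^*$, proving the equivalence in the first claim.

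Next, for the Fredholm property, I would verify that $B_1 \oplus B_2$ is indeed a $D'$-elliptic boundary condition: $B_1 = H^{1/2}_{(-\infty,a)}(A)$ is a generalized APS condition on $N_1$, while $B_2 = H^{1/2}_{[a,\infty)}(A)=H^{1/2}_{(-\infty,-a]}(-A)$ is a generalized APS condition on $N_2$ with respect to its adapted operator $-A$. Together with coercivity at infinity, \tref{thmcoer} gives that $D'_{B_1\oplus B_2}$ is Fredholm; and $D$ on $M$ is Fredholm since $\dM = \emptyset$.

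For the index equality, I would reduce to the transmission condition $B_T=\{(\phi,\phi)\in H^{1/2}(N_1,E)\oplus H^{1/2}(N_2,E)\}$ in two steps. Step (a): the natural pullback map $\iota\colon \Cuc(M,E)\to \Cuc(M',E')$ extends, via \tref{maxdom}, to an isometric isomorphism from $\dom D_{\max}$ onto the subspace of $\dom D'_{\max}$ determined by the boundary condition $B_T$, intertwining $D$ and $D'_{B_T}$. Doing the same for $D^*$ (with the analogous transmission condition on the adjoint side obtained from \eqref{atilde}) yields $\ind D = \ind D'_{B_T}$. Step (b): show that $\ind D'_{B_T} = \ind D'_{B_1\oplus B_2}$ by a homotopy of $D'$-elliptic boundary conditions. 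Both $B_T$ and $B_1\oplus B_2$ can be written in the canonical form of \dref{dell} relative to the same splitting of $L^2(\dM',E')=L^2(N_1,E)\oplus L^2(N_2,E)$; one then interpolates the corresponding operators $g$ linearly and invokes the invariance of the Fredholm index under continuous paths of $D'$-elliptic boundary conditions. As an alternative route, one can apply \tref{thmad} to both $B_T$ and $B_1\oplus B_2$, reducing each index to that of the generalized APS condition $B(a)$ plus a finite-dimensional correction $\dim W_+ - \dim W_-$, and check that these corrections coincide.

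The main obstacle I expect is Step (b): rewriting the transmission condition in the elliptic form of \dref{dell} and controlling the finite-dimensional contribution from $\ker A$, which enters naturally as $W_\pm$ in the description of $B_T$ but is absorbed into $V_\pm$ (for $a$ not an eigenvalue) or distributed between $W_+$ and $W_-$ (for $a=0$) in the description of $B_1\oplus B_2$. The bookkeeping for these low-spectrum contributions, together with the care needed to ensure that the interpolating boundary conditions remain $D'$-elliptic throughout the homotopy, is the technical heart of the argument.
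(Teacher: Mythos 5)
Since the paper marks \tref{split} with a $\blacksquare$, there is no in-text proof to compare against; the paper simply cites \cite[Thm.~8.17]{BB}. Evaluated on its own merits, your three-stage outline is sound and follows the natural route. Stage~1 (coercivity transfer via a tubular neighborhood of the compact hypersurface $N$) and Stage~2 (Fredholmness via \tref{thmcoer}) are correct. Stage~3 via the transmission condition $B_T$ is also the right idea, and the identification $\ind D=\ind D'_{B_T}$ through the pullback $\iota$ is justified: smooth compactly supported sections are dense in $\dom D_{\max}$ (\tref{maxdom}.\ref{ccr}) and in $\dom D'_{B_T,\max}$ (\tref{smoothdense}), $\iota$ is a graph-norm isometry between them with inverse given by gluing, so the operators are unitarily equivalent.

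One concrete flaw: in Step (b) your primary route, ``interpolate the corresponding operators $g$ linearly,'' does not in fact connect $B_T$ to $B_1\oplus B_2$, because the finite-dimensional summands $W_\pm$ of the two boundary conditions differ and cannot be matched by a pure $g$-homotopy with $V_\pm,W_\pm$ held fixed. Concretely, in the special representation of Remark~\ref{remell}.\ref{specialdecomposition} with $a=0$ one finds $W_+=\{(\phi,\phi):\phi\in\ker A_0\}$, $W_-=\{0\}$ for $B_T$ (which is \emph{not} the representation given in \eref{ex:TransCond}; that one is not of the Remark~\ref{remell}.\ref{specialdecomposition} form), whereas $W_+=\{0\}\times\ker A_0$, $W_-=\{0\}$ for $B_1\oplus B_2$. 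Your ``alternative route'' via \tref{thmad} is the one that works: both give $\ind D'_{B(0)}+\dim\ker A_0$, and the equality follows. You should also note that $B_T$ as given in \eref{ex:TransCond} must first be rewritten in the Remark~\ref{remell}.\ref{specialdecomposition} form before \tref{thmad} can be applied, since that theorem presupposes this particular representation. Finally, your proposal does not address the last sentence of the theorem (arbitrary $D$-elliptic $B_1$ with $B_2=B_1^\perp$); this needs the observation that $B_1^\perp$, viewed on $N_2$ with adapted operator $-A_0$, is $D'$-elliptic with data $(V_+,W_+,V_-,W_-,-g^*)$ obtained by swapping the roles of the $\pm$ pieces in the data for $B_1$, followed by the analogous \tref{thmad} bookkeeping.
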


Let $M_1$ and $M_2$ be complete Riemannian manifolds without boundary and
\begin{equation*}
  D_i: \Cu(M_i,E_i) \to \Cu(M_i,F_i)
\end{equation*}
be Dirac-type operators.
Let $K_1\subset M_1$ and $K_2\subset M_2$ be compact subsets.
Then we say that \emph{$D_1$ outside $K_1$ agrees with $D_2$ outside $K_2$}
if there are an isometry $f:M_1\setminus K_1 \to M_2\setminus K_2$
and smooth fiberwise linear isometries
\begin{equation*}
  \mathcal{I}_E:E_1|_{M_1\setminus K_1} \to E_2|_{M_2\setminus K_2}
  \quad\text{and}\quad
  \mathcal{I}_F:F_1|_{M_1\setminus K_1} \to F_2|_{M_2\setminus K_2}
\end{equation*}
such that
\begin{equation*}
\xymatrix{
E_1|_{M_1\setminus K_1} \ar[d] \ar[r]^{\mathcal{I}_E} & E_2|_{M_2\setminus K_2} \ar[d] && F_1|_{M_1\setminus K_1} \ar[d] \ar[r]^{\mathcal{I}_F}& F_2|_{M_2\setminus K_2} \ar[d]\\
M_1\setminus K_1 \ar[r]^f & M_2\setminus K_2 && M_1\setminus K_1\ar[r]^f & M_2\setminus K_2
}
\end{equation*}
commute and 
\begin{equation*}
  \mathcal{I}_F\circ (D_1\Phi) \circ f^{-1} = D_2(\mathcal{I}_E\circ\Phi\circ f^{-1})
\end{equation*}
for all smooth sections $\Phi$ of $E_1$ over $M_1\setminus K_1$.

Assume now that $D_1$ and $D_2$ agree outside compact domains $K_i\subset M_i$.
For $i=1,2$, choose a decomposition $M_i=M_i'\cup M_i''$ such that
$N_i=M_i'\cap M_i''$ is a compact hypersurface in $M_i$,
$K_i$ is contained in the interior of $M_i'$, $f(M_1'')=M_2''$, and $f(N_1)=N_2$. 
Denote the restriction of $D_i$ to $M_i'$ by $D_i'$.
The following result is a general version of the $\Phi$-relative index theorem
of Gromov and Lawson \cite[Thm.~4.35]{GL}.

\begin{thm}[{\cite[Thm.~1.21]{BB}}]\label{firelind}
Under the above assumptions,
let $B_1\subset H^{1/2}(N_1,E_1)$ and $B_2\subset H^{1/2}(N_2,E_2)$
be $D_i$-elliptic boundary conditions which correspond to each other
under the identifications given by $f$ and $\mathcal I_E$ as above.
Assume that $D_1$ and $D_2$ and their formal adjoints are coercive at infinity.

Then $D_1$, $D_2$, $D'_{1,B_1}$, and $D'_{2,B_2}$
are Fredholm operators such that
\[
  \ind D_{1} - \ind D_{2}
  = \ind D'_{1,B_1} - \ind D'_{2,B_2}
  = \int_{K_1} \alpha_{D_1} - \int_{K_2} \alpha_{D_2} ,
\]
where $\alpha_{D_1}$ and $\alpha_{D_2}$ are the index densities
associated with $D_1$ and $D_2$.
\hfill$\blacksquare$
\end{thm}

\begin{remark}
In \tref{firelind}, it is also possible to deal with the situation that $M_1$ and $M_2$
have compact boundary and elliptic boundary conditions $B_1$ and $B_2$
along their boundaries are given.
One then chooses the hypersurface $N=N_i$ such that it does not intersect
the boundary of $M_i$ and such that the boundary of $M_i$ is contained in $M_i'$.
The same arguments as above yield
$$
\ind D_{1,B_1} - \ind D_{2,B_2}
= 
\ind D'_{1,B_1\oplus B_1'} - \ind D'_{2,B_2\oplus B_2'} ,
$$
where $B_1'$ and $B_2'$ are elliptic boundary conditions along $N_1$ and $N_2$
which correspond to each other under the identifications
given by $f$ and $\mathcal I_E$ as further up. 
A similar remark applies to \tref{split}.
\end{remark}

\subsection{Boundary chiralities and index}
\label{coth}

\begin{lemma}\label{freedex}
Assume the \setup{} and that $M$ is connected.
Let $D$ be formally self-adjoint and let $A$ anticommute with $\sigma_D(\nu^\flat)$.
Let $\chi$ be a boundary chirality as in \eref{chirco1}
which commutes with $\sigma_D(\nu^\flat)$.
Let $E=E^+\oplus E^-$ be the orthogonal splitting
into the eigenbundles of $\chi$ for the eigenvalues $\pm1$,
and write
\begin{equation*}
  A = \begin{pmatrix} 0 & A^- \cr A^+ & 0 \end{pmatrix}
\end{equation*}
with respect to this splitting.
Then, if $D$ is coercive at infinity,
\begin{equation*}
  \ind D_{B_\chi} = \tfrac12\ind A^+ = -\tfrac12\ind A^- ,
\end{equation*}
where $B_\chi=H^{1/2}(\dM,E^+)$ is as in \eref{chirco1}.
\end{lemma}

\begin{proof}
Let $B_\pm=\ker A\oplus\{\phi\pm\chi\phi \mid \phi\in H^{1/2}_{(-\infty)}(A)$.
Then $B_\pm$ is a $D$-elliptic boundary condition and, by \tref{thmad},
\begin{equation*}
  \ind D_{B_\pm} =  \ind D_{\BAPS} + \dim\ker A .
\end{equation*}
By \cref{cor:AgraDy}, we have
\begin{equation*}
  \ind D_{B_{\pm\chi}} = \ind D_{B_\pm} - \dim \ker A^\mp ,
\end{equation*}
where $B_{+\chi}=B_\chi$ and $B_{-\chi}=H^{1/2}(\dM,E^-)$.
Since $B_{-\chi}=B_\chi^\perp$ and $B_{-\chi}$ is invariant under $\sigma_D(\nu^\flat)$,
we get that $B_{-\chi}$ is the adjoint of $B_\chi$.
In conclusion
\begin{align*}
  2\ind D_{B_{\chi}}
  &= \ind D_{B_{\chi}} - \ind D_{B_{-\chi}} \\
  &= \ind D_{B_{+}} - \dim\ker A^- - \ind D_{B_{-}} + \dim\ker A^+ \\
  &= \ind A^+ .
  \qedhere
\end{align*}
\end{proof}

\begin{thm}[{Cobordism Theorem, \cite[Thm.~1.22]{BB}}]\label{cobothm}
Assume the \setup{} and that $M$ is connected.
Let $D$ be formally self-adjoint and let $A$ anticommute with $\sigma_D(\nu^\flat)$.
Then $\chi=i\sigma_D(\nu^\flat)$ is a boundary chirality.
Moreover, if $D$ is coercive at infinity and with $A^\pm$ as in \tref{freedex}, then
\begin{equation*}
\ind A^+ = \ind A^- = 0 .
\end{equation*}
\end{thm}

Originally, the cobordism theorem was formulated for compact manifolds $M$
with boundary and showed the cobordism invariance of the index. 
This played an important role in the original proof of the Atiyah-Singer index theorem,
compare e.g.\ \cite[Ch.~XVII]{Pa} and \cite[Ch.~21]{BW}.
In this case,
one can also derive the cobordism invariance from Roe's index theorem
for partitioned manifolds \cite{R,Hi}.
We replace compactness of the bordism by the weaker assumption of coercivity of $D$.

\begin{proof}[Sketch of proof of \tref{cobothm}]
We show that $\ker D_{B_\chi} = \ker D_{B_{-\chi}} = 0$,
then the assertion follows from \lref{freedex}.
Let $\Phi\in\ker D_{B^\pm,\max}$.
By \tref{maxdom}.\ref{parintmd}, we have
\begin{align*}
  0
  &= (D_{\max}\Phi,\Phi)_{L^2(M)} - (\Phi,D_{\max}\Phi)_{L^2(M)} \\
  &= - (\sigma_D(\nu^\flat)\mathcal R\Phi,\mathcal R\Phi)_{L^2(\dM)} \\
  &= \pm i \|\mathcal R\Phi \|_{L^2(\dM)}^2 ,
\end{align*}
and hence $\mathcal R\Phi=0$.
Now an elementary argument involving the unique continuation
for solutions of $D\Phi=0$ implies $\Phi=0$.
\end{proof}

As an application of \lref{freedex} and \tref{cobothm},
we generalize Freed's Theorem~B from \cite{Fr} as follows:

\begin{thm}\label{freedb}
Assume the \setup{} and that $M$ is connected.
Let $D$ be formally self-adjoint and let $A$ anticommute with $\sigma_D(\nu^\flat)$.
Let $\chi$ be a boundary chirality as in \eref{chirco1}
which commutes with $\sigma_D(\nu^\flat)$.
Let $E=E^{++}\oplus E^{+-}\oplus E^{-+}\oplus E^{--}$ be the orthogonal splitting
into the simultaneous eigenbundles of $i\sigma_D(\nu^\flat)$ and $\chi$
for the eigenvalues $\pm1$.

Then $A$ maps $E^{++}$ to $E^{--}$ and $E^{+-}$ to $E^{-+}$ and conversely.
Moreover, with the corresponding notation for the restrictions of $A$, we have,
if $D$ is coercive at infinity,
\begin{equation*}
  \ind D_{B_\chi} = \ind A^{++} = - \ind A^{--} .
\end{equation*}
\end{thm}

\begin{proof}
By \tref{cobothm}, we have
\begin{equation*}
  \ind A^{++} + \ind A^{+-}  =\ind A^{--} + \ind A^{-+} = 0 .
\end{equation*}
On the other hand, $A^{--}$ is adjoint to $A^{++}$, hence \lref{freedex} gives
\begin{equation*}
  2 \ind D_{B_\chi} = \ind A^{++} + \ind A^{-+}  = \ind A^{++} - \ind A^{--} = 2 \ind A^{++} .
  \qedhere
\end{equation*}
\end{proof}

\appendix

\section{Dirac operators in the sense of Gromov and Lawson}
\label{sec:GromovLawson}

Here we discuss an important subclass of Dirac-type operators.
Note that the connection in \cref{cor:guterZshg} is not metric, in general.

\begin{definition}\label{def:GLDirac}
A formally self-adjoint operator $D:C^\infty(M,E) \to C^\infty(M,E)$ of Dirac type is called a \emph{Dirac operator in the sense of Gromov and Lawson} if there exists a metric connection $\nabla$ on $E$ such that 
\begin{asparaenum}[1)]
\item 
$D = \sum\nolimits_j \sigma_D(e_j^*)\circ\nabla_{e_j}$,
for any local orthonormal tangent frame $(e_1,\ldots,e_n)$;
\item
the principal symbol $\sigma_D$ of $D$ is parallel with respect to $\nabla$ and to the Levi-Civita connection.
\end{asparaenum}
\end{definition}

This is equivalent to the definition of \emph{generalized Dirac operators} in \cite[Sec.~1]{GL} or to \emph{Dirac operators on Dirac bundles} in \cite[Ch.~II, \S~5]{LM}.

\begin{remark}
For a Dirac operator in the sense of Gromov and Lawson, the connection $\nabla$ in \dref{def:GLDirac} and the connection in the Weitzenb\"ock formula \eqref{eq:Weitzenboeck} coincide and are uniquely determined by these properties.
We will call $\nabla$ the connection \emph{associated with the Dirac operator} $D$.
Moreover, the endomorphism field $\mathcal{K}$ in the Weitzenb\"ock formula takes the form

\begin{equation*}
\mathcal K 
= 
\frac12\sum_{i,j} \sigma_D(e_i^*)\circ \sigma_D(e_j^*)\circ R^\nabla(e_i,e_j)
\end{equation*}
where $R^\nabla$ is the curvature tensor of $\nabla$.
See \cite[Prop.~2.5]{GL} for a proof.
\end{remark}

Next, we show how to explicitly construct an adapted operator on the boundary satisfying \eqref{eq:ANuAnti} for a Dirac operator in the sense of Gromov and Lawson.
Let $\nabla$ be the associated connection.
Along the boundary we define
\begin{equation}
A_0 
:= 
\sigma_D(\nu^\flat)^{-1}D - \nabla_\nu = \sigma_D(\nu^\flat)^{-1}\sum_{j=2}^n \sigma_D(e_j^*)\nabla_{e_j} .
\label{eq:defA0}
\end{equation}
Here $(e_2,\ldots,e_n)$ is any local tangent frame for $\dM$.
Then $A_0$ is a first-order differential operator acting on section of $E|_\dM \to \dM$ with principal symbol $\sigma_{A_0}(\xi)=\sigma_D(\nu^\flat)^{-1}\sigma_D(\xi)$ as required for an adapted boundary operator.
From the Weitzenb\"ock formula \eqref{eq:Weitzenboeck} we get, using \pref{greenfor} twice, once for $D$ and once for $\nabla$, for all $\Phi,\Psi \in C^\infty_c(M,E)$:
\begin{align}
0 &=
\int_M \big( \langle D^2\Phi,\Psi\rangle - \langle \nabla^*\nabla \Phi,\Psi\rangle - \langle \mathcal{K}\Phi,\Psi\rangle\big)\dV \nonumber \\
&=
\int_M \big( \langle D\Phi,D\Psi\rangle - \langle \nabla \Phi,\nabla\Psi\rangle - \langle \mathcal{K}\Phi,\Psi\rangle\big)\dV \nonumber  \\
&\quad+\int_\dM \big( -\langle \sigma_D(\nu^\flat)D\Phi,\Psi\rangle + \langle \sigma_{\nabla^*}(\nu^\flat)\nabla \Phi,\Psi\rangle \big)\dS .
\label{eq:pf1}
\end{align}
For the boundary contribution we have
\begin{align}
 -\langle \sigma_D(\nu^\flat)D\Phi,\Psi\rangle + \langle \sigma_{\nabla^*}(\nu^\flat)\nabla \Phi,\Psi\rangle 
&=
\langle \sigma_D(\nu^\flat)^{-1}D\Phi,\Psi\rangle - \langle \nabla \Phi,\sigma_{\nabla}(\nu^\flat)\Psi\rangle \nonumber \\
&=
\langle \sigma_D(\nu^\flat)^{-1}D\Phi,\Psi\rangle - \langle \nabla \Phi,\nu^\flat\otimes\Psi\rangle\nonumber  \\
&=
\langle \sigma_D(\nu^\flat)^{-1}D\Phi,\Psi\rangle - \langle \nabla_\nu \Phi,\Psi\rangle\nonumber  \\
&=
\langle A_0\Phi,\Psi\rangle .
\label{eq:pf2}
\end{align}
Inserting \eqref{eq:pf2} into \eqref{eq:pf1} we get
\begin{equation}
\int_M \big( \langle D\Phi,D\Psi\rangle - \langle \nabla \Phi,\nabla\Psi\rangle - \langle \mathcal{K}\Phi,\Psi\rangle\big)\dV
=
-\int_\dM\langle A_0\phi,\psi\rangle\dS
\label{eq:pf3}
\end{equation}
where $\phi:=\Phi|_\dM$ and $\psi:=\Psi|_\dM$.
Since the left-hand side of \eqref{eq:pf3} is symmetric in $\Phi$ and $\Psi$, the right-hand side is symmetric as well, hence $A_0$ is formally self-adjoint.
This shows that $A_0$ is an adapted boundary operator for $D$.

In general, $A_0$ does not anticommute with $\sigma_D(\nu^\flat)$ however.
We will rectify this by adding a suitable zero-order term.
First, let us compute the anticommutator of $A_0$ and $\sigma_D(\nu^\flat)$:
\begin{align*}
\{\sigma_D(\nu^\flat),A_0\}\phi
&=
\sum_{j=2}^n \sigma_D(e_j^*)\nabla_{e_j} \phi + 
 \sigma_D(\nu^\flat)^{-1}\sum_{j=2}^n \sigma_D(e_j^*)\nabla_{e_j}(\sigma_D(\nu^\flat)\phi) \\
&=
\sum_{j=2}^n\Big( \sigma_D(e_j^*)\nabla_{e_j}\phi +  \sigma_D(\nu^\flat)^{-1} \sigma_D(e_j^*)\sigma_D(\nu^\flat)\nabla_{e_j}\phi \\
&\quad\quad\quad
+  \sigma_D(\nu^\flat)^{-1} \sigma_D(e_j^*)\sigma_D(\nabla_{e_j}\nu^\flat)\phi   \Big) \\
&=
\sigma_D(\nu^\flat)^{-1}\sum_{j=2}^n \sigma_D(e_j^*)\sigma_D(\nabla_{e_j}\nu^\flat)\phi .
\end{align*}
Now $\nabla_\cdot\nu$ is the negative of the Weingarten map of the boundary with respect to the normal field $\nu$.
We choose the orthonormal tangent frame $(e_2,\ldots,e_n)$ to consist of eigenvectors
of the Weingarten map.
The corresponding eigenvalues $\kappa_2,\ldots,\kappa_n$ are the \emph{principal curvatures} of $\dM$.
We get
\begin{align*}
\sum_{j=2}^n\sigma_D(e_j^\flat)\sigma_D(\nabla_{e_j}\nu^\flat)
=
-\sum_{j=2}^n \sigma_D(e_j^\flat)\sigma_D(\kappa_j e_j^\flat) 
=
\sum_{j=2}^n \kappa_j 
=
(n-1)H,
\end{align*}
where $H$ is the \emph{mean curvature} of $\dM$ with respect to $\nu$.
Therefore,
\begin{equation*}
\{\sigma_D(\nu^\flat),A_0\} =(n-1)H \sigma_D(\nu^\flat)^{-1} = -(n-1)H \sigma_D(\nu^\flat) .
\end{equation*}
Since clearly
\[
\{\sigma_D(\nu^\flat),(n-1)H\} = 2(n-1)H \sigma_D(\nu^\flat),
\]
the operator
\begin{equation*}
  A:=A_0 + \frac{n-1}{2}H=\sigma_D(\nu^\flat)^{-1}D - \nabla_\nu + \frac{n-1}{2}H 
\end{equation*}
is an adapted boundary operator for $D$
satisfying \eqref{eq:ANuAnti}.
From \eqref{eq:pf3} we also have
\begin{equation}
\int_M \big( \langle D\Phi,D\Psi\rangle - \langle \nabla \Phi,\nabla\Psi\rangle - \langle \mathcal{K}\Phi,\Psi\rangle\big)\dV
=
\int_\dM\langle (\tfrac{n-1}{2}H-A)\phi,\psi\rangle\dS .
\label{eq:pf4}
\end{equation}

\begin{definition}
For a Dirac operator $D$ in the sense of Gromov and Lawson as above,
we call $A$ the \emph{canonical boundary operator} for $D$.
\end{definition}

\begin{remark}
The canonical boundary operator $A$ is again a Dirac operator in the sense of Gromov and Lawson.
Namely, define a connection on $E|_{\dM}$ by
\[
\nd_X\phi := \n_X\phi + \textstyle{\frac12}\sigma_D(\nu^\flat)^{-1}\sigma_D(\nabla_X\nu^\flat)\phi .
\]
The Clifford relations \eqref{Cliff1} show that the term $\sigma_D(\nu^\flat)^{-1}\sigma_D(\nabla_X\nu^\flat)=\sigma_D(\nu^\flat)^*\sigma_D(\nabla_X\nu^\flat)$ is skewhermitian, hence $\nd$ is a metric connection.
By \eqref{eq:defA0}, $A_0=\sum_{j=2}^n \sigma_{A_0}(e_j^*)\circ\n_{e_j}$.
This, $\sigma_{A_0}=\sigma_{A}$, and 
\[
\sum_{j=2}^n \sigma_{A_0}(e_j^*)\sigma_D(\nu^\flat)^*\sigma_D(\nabla_{e_j}\nu^\flat)
=
\frac{n-1}{2}\, H
\]
show that 
\[
A=\sum_{j=2}^n \sigma_{A}(e_j^*)\circ\nd_{e_j} .
\]
Moreover, a straightforward computation using the Gauss equation for the Levi-Civita connections $\n_X\xi=\nd_X\xi -\xi(\n_X\nu)\nu^\flat$ shows that $\sigma_A$ is parallel with respect to the boundary connections $\nd$.
\end{remark}

\begin{remark}
The triangle inequality and the Cauchy--Schwarz inequality show
\begin{align}
|D\Phi |^2 
&=
\big|\sum_{j=1}^n\sigma_D(e_j^\flat)\n_{e_j}\Phi\big|^2 
\le 
\big(\sum_{j=1}^n|\sigma_D(e_j^\flat)\n_{e_j}\Phi|\big)^2 \notag\\
&\le
n\cdot\sum_{j=1}^n|\sigma_D(e_j^\flat)\n_{e_j}\Phi|^2 
=
n\cdot\sum_{j=1}^n\langle\sigma_D(e_j^\flat)^*\sigma_D(e_j^\flat)\n_{e_j}\Phi,\n_{e_j}\Phi\rangle \notag\\
&=
n\cdot \sum_{j=1}^n|\n_{e_j}\Phi|^2 
= 
n\cdot | \nabla\Phi |^2 ,
\label{cauchydna}
\end{align}
for any orthonormal tangent frame $(e_1,\dots,e_n)$ and all $\Phi\in C^\infty(M,E)$.

When does equality hold?
Equality in the Cauchy--Schwarz inequality implies that all summands $|\sigma_D(e_j^\flat)\n_{e_j}\Phi|$ are equal, i.e., $|\sigma_D(e_j^\flat)\n_{e_j}\Phi|=|\sigma_D(e_1^\flat)\n_{e_1}\Phi|$.
Equality in the triangle inequality then implies $\sigma_D(e_j^\flat)\n_{e_j}\Phi=\sigma_D(e_1^\flat)\n_{e_1}\Phi$ for all $j$.
Thus
\[
\sigma_D(e_1^\flat)\n_{e_1}\Phi 
=
\frac1n \sum_{j=1}^n  \sigma_D(e_j^\flat)\n_{e_j}\Phi 
=
\frac1n D\Phi,
\]
hence $\n_{e_1}\Phi = \frac1n\sigma_D(e_1^\flat)^* D\Phi$.
Since $e_1$ is arbitrary, this shows the \emph{twistor equation} 
\begin{equation}
  \nabla_X\Phi = \textstyle{\frac1{n}} \sigma_D(X^\flat)^* D\Phi ,
  \label{twist}
\end{equation}
for all vector fields $X$ on $M$.
Conversely, if $\Phi$ solves the twistor equation, one sees directly that equality holds in \eqref{cauchydna}.

Inserting \eqref{cauchydna} into \eqref{eq:pf4} yields
\begin{equation*}
\tfrac{n-1}{n} \int_M | D\Phi |^2 \dV
\ge 
\int_M \langle \mathcal K\Phi,\Phi \rangle\dV + \int_\dM \langle (\tfrac{n-1}{2}H - A)\phi,\phi\rangle\dS ,
\end{equation*}
for all $\Phi\in\Cu_c(M,E)$,
where $\phi:=\Phi|_{\dM}$.
Moreover, equality holds if and only if $\Phi$ solves the twistor equation \eqref{twist}.
\end{remark}

\section{Proofs of some auxiliary results}
\label{sec:beweise}

In this section we collect the proofs of some of the auxiliary results.

\begin{proof}[Proof of Proposition~\ref{prop:guterZshg}]
We start by choosing an arbitrary connection $\bar\nabla$ on $E$ and define
\begin{equation*}
  \bar D:  C^\infty(M,E) \to C^\infty(M,F) , \quad
  \bar D\Phi := \sum\nolimits_j \sigma_D(e_j^*)\bar\nabla_{e_j}\Phi .
\end{equation*}
Then $\bar D$ has the same principal symbol as $D$ and, therefore,
the difference $S:= D - \bar D$ is of order $0$.
In other words, $S$ is a field of homomorphisms from $E$ to $F$.

Since $\mathcal{A}_D$ is onto, the restriction $\mathcal{A}$ of $\mathcal{A}_D$ to the orthogonal complement of the kernel of $\mathcal{A}_D$ is a fiberwise isomorphism.
We put $V:=\mathcal{A}^{-1}(S)$ and define a new connection by
\[
 \nabla := \bar\nabla + V .
\]
We compute
\begin{align*}
  \sum\nolimits_j \sigma_D(e_j^*)\circ \nabla_{e_j}
  &= 
  \sum\nolimits_j \sigma_D(e_j^*)\circ\bar\nabla_{e_j} 
  + \sum\nolimits_j \sigma_D(e_j^*)\circ V(e_j) \\
  &=
  \bar D + \mathcal{A}_D(V)\\
  &=
  \bar D + S 
  =
  D . \qedhere
\end{align*}
\end{proof}

\begin{proof}[Proof of \pref{Weitzen}]
Let $\widetilde{\nabla}$ be any metric connection on $E$. 
Then $F:=D^*D-\widetilde{\nabla}^*\widetilde{\nabla}$ is formally self-adjoint.
Since both, $D^*D$ and $\widetilde{\nabla}^* \widetilde{\nabla}$, have the same principal symbol $-|\xi|^2\cdot\id$, the operator $F$ is of order at most one. 
Any other metric connection $\nabla$ on $E$ is of the form $\nabla = \widetilde{\nabla} + B$ where $B$ is a $1$-form with values in skewhermitian endomorphisms of $E$.
Hence
   \begin{align*}
      D^*D &= (\nabla-B)^*(\nabla-B) + F 
         = \nabla^* \nabla \underbrace{- \nabla^* B - B^* \nabla + B^* B + F}_{=:\,\mathcal{K}} .
   \end{align*}
In general, $\mathcal{K}$ is of first order and we need to show that there is a unique $B$ such that $\mathcal{K}$ is of order zero.
Since $B^* B$ is of order zero, $\mathcal{K}$ is of order zero if and only if $F - \nabla^*B - B^*\nabla$ is of order zero, i.e., if and only if $\sigma_F(\xi) = \sigma_{\nabla^*B + B^*\nabla}(\xi)$ for all $\xi \in T^*M$.
We compute, using a local tangent frame $e_1,\ldots,e_n$,
   \begin{align*}
      \<\sigma_{\nabla^* B + B^* \nabla}(\xi)\varphi,\psi\>
         &= \<\big(\sigma_{\nabla^*}(\xi)\circ B + B^*\circ \sigma_{\nabla}(\xi)\big)\varphi,\psi\> \\
         &= - \<B\varphi,\sigma_\nabla(\xi)\psi\> + \<\sigma_\nabla(\xi)\varphi,B\psi\> \\
         &= - \<B\varphi,\xi\otimes \psi\> + \<\xi\otimes \varphi,B\psi\> \\
         &= -\big\langle\sum_i e_i^* \otimes B_{e_i}\varphi,\xi \otimes \psi\big\rangle + \big\langle\xi\otimes \varphi,\sum_i e_i^* \otimes B_{e_i} \psi\big\rangle  \\
         &= - \sum_i \langle e_i^*,\xi\rangle\langle B_{e_i}\varphi,\psi\rangle + \sum_i \langle e_i^*,\xi\rangle \<\varphi,B_{e_i} \psi\> \\
         &= - \< B_{\xi^{\sharp}}\varphi,\psi\> + \< \varphi,B_{\xi^{\sharp}} \psi\> \\
         &= - 2 \<B_{\xi^{\sharp}}\varphi,\psi\>.
   \end{align*}
Hence, $\sigma_{\nabla^* B + B^* \nabla}(\xi) = - 2 B_{\xi^{\sharp}}$. 
Thus, $\mathcal{K}$ is of order 0 if and only if 
\[
B_X = - \tfrac{1}{2}\, \sigma_F(X^b)
\] 
for all $X\in TM$.
Note that $\sigma_F(\xi)$ is indeed skewhermitian
because $F$ is formally self-adjoint.
\end{proof}
\begin{proof}[Proof of \lref{norma}]
Since $D$ is formally self-adjoint and of Dirac type,
\begin{equation}
  - \sigma_D(\nu^\flat) = \sigma_D(\nu^\flat)^* = \sigma_D(\nu^\flat)^{-1} ,
\label{symbnu}
\end{equation}
by \eqref{symbad} and \eqref{symbdt}.
Let $A_0$ be adapted to $D$ along $\partial M$ and $\xi\in T^*_x\partial M$,
as usual extended to $T^*_xM$ by $\xi(\nu(x))=0$.
Then, again using \eqref{Cliff1} and \eqref{symba},
\begin{align*}
  \sigma_{A_0}(\xi) + \sigma_D(\nu(x)^\flat)&\sigma_{A_0}(\xi)\sigma_D(\nu(x)^\flat)^* \\
  &= \sigma_D(\nu(x)^\flat)^{-1}\sigma_D(\xi) + \sigma_D(\xi)\sigma_D(\nu(x)^\flat)^* \\
  &= \sigma_D(\nu(x)^\flat)^*\sigma_D(\xi) + \sigma_D(\xi)^*\sigma_D(\nu(x)^\flat) \\
  &= 2\langle \nu(x)^\flat , \xi \rangle \cdot \id_E \\
  &= 0 .
\end{align*}
Hence $2S:=A_0+\sigma_D(\nu^\flat)A_0\sigma_D(\nu^\flat)^*$ is of order $0$,
that is, $S$ is a field of endomorphisms of $E$ along $\partial M$.
Since $A_0$ is formally self-adjoint so is $S$ and, by \eqref{symbnu},
\begin{align*}
  \sigma_D(\nu^\flat) 2S
  = \sigma_D(\nu^\flat) A_0 + A_0 \sigma_D(\nu^\flat)
  = 2S \sigma_D(\nu^\flat) .
\end{align*}
Hence $A:=A_0-S$ is adapted to $D$ along $\partial M$ and
\begin{align*}
\sigma_D(\nu^\flat) A + A \sigma_D(\nu^\flat)
&=
\sigma_D(\nu^\flat) A_0 + A_0 \sigma_D(\nu^\flat) - \sigma_D(\nu^\flat) S - S \sigma_D(\nu^\flat)  \\
&=
\sigma_D(\nu^\flat)\big(A_0 -\sigma_D(\nu^\flat)A_0\sigma_D(\nu^\flat) -2S\big) \\
&=
\sigma_D(\nu^\flat)\big(A_0 +\sigma_D(\nu^\flat)A_0\sigma_D(\nu^\flat)^* -2S\big) \\
&= 0.
\qedhere
\end{align*}
\end{proof}



\end{document}